\documentclass[11pt]{article}
\usepackage[a4paper,margin=30mm]{geometry}
\usepackage[T1]{fontenc}

\usepackage{amsmath,amsfonts,amssymb,amsthm,mathrsfs,esint}
\usepackage{enumerate}

\usepackage{url} 
\usepackage[colorlinks=true, linkcolor=blue, citecolor=blue, urlcolor=black]{hyperref}

\numberwithin{equation}{section}
\newtheorem{theorem}{Theorem}[section]
\newtheorem{lemma}[theorem]{Lemma}
\newtheorem{corollary}[theorem]{Corollary}
\newtheorem{proposition}[theorem]{Proposition}
\theoremstyle{definition}
\newtheorem{definition}[theorem]{Definition}
\newtheorem{example}[theorem]{Example}
\theoremstyle{remark}
\newtheorem*{remark}{Remark}

\allowdisplaybreaks

\title
{The endpoints $p_-=1$ and $p_+=\infty$ for the maximal operator 
on~$L^{p(\cdot)}$ over spaces of homogeneous type}

\author{Alina Shalukhina
\thanks{This work is funded by national funds through 
the FCT -- Funda\c{c}\~{a}o para a Ci\^encia e a Tecnologia, I.P., 
under the scope of the projects \texttt{UID/00297/2025} and 
\texttt{UID/PRR/00297/2025} (Center for Mathematics and Applications -- NOVA Math) 
and under the scope of the PhD scholarship \texttt{UI/BD/154284/2022.}} 
\\[.5ex]
\small Center for Mathematics and Applications and Department of Mathematics, \\
\small NOVA School of Science and Technology, NOVA University Lisbon \\
\small Quinta da Torre, Caparica, 2829--516, Portugal \\
\small \texttt{a.shalukhina@campus.fct.unl.pt}
}

\begin{document}
\date{}
\maketitle

\begin{abstract}
\noindent
We investigate the boundedness of the Hardy--Littlewood maximal operator~$M$ on 
variable Lebesgue spaces~$L^{p(\cdot)}(\Omega,d,\mu)$ over spaces of homogeneous type, 
focusing on the endpoints $p_-=1$ and $p_+=\infty$. At the lower endpoint, we prove that
the boundedness of~$M$ forces $p_->1$ for all exponents if and only if the
space~$(\Omega,d,\mu)$ is non-atomic. At the upper endpoint, we generalize
the Muckenhoupt--Nekvinda sufficient condition of Adamadze, 
Diening, and Kopaliani. On bounded spaces of homogeneous type, whenever~$p_->1$, this 
yields a boundedness criterion based on the Muckenhoupt condition alone.
We also construct an unbounded space where the Muckenhoupt 
condition suffices for the maximal operator's boundedness.

\vspace{1em}
\noindent\textbf{Keywords:} Maximal operator, variable Lebesgue space, space
of homogeneous type, 
variable Muckenhoupt condition, Nekvinda condition.
\\[0.5em]
\textbf{MSC~2020:} 42B25, 46E30, 30L99.
\end{abstract}

\section{Introduction}
Let~$(\Omega,d,\mu)$ be a space of homogeneous type, that is, a quasi-metric space with
a doubling measure, and let 
\[
Mf(x):=\sup_{B\ni x}\frac{1}{\mu(B)}\int_B |f(y)|\,d\mu(y),
\quad x\in\Omega,
\]
define the Hardy--Littlewood maximal operator~$M$. This work investigates the 
boundedness of~$M$ on variable Lebesgue spaces~$L^{p(\cdot)}(\Omega,d,\mu)$. The 
unifying theme of our results is the treatment of two critical endpoint regimes that 
are usually excluded from the theory: the lower endpoint~$p_-=1$ and the upper 
endpoint~$p_+=\infty$.

We first consider the lower endpoint. In the Euclidean setting, the boundedness
of~$M$ on~$L^{p(\cdot)}(\mathbb{R}^n)$ implies~$p_->1$. Cruz-Uribe,
Fiorenza, and Neugebauer~\cite[Theorem~1.7]{CFN03} established this necessity under 
the upper semi-continuity of the exponent, and 
Diening et al.~\cite[Theorem 6.3]{DHHMS09} later removed this regularity hypothesis. 
For spaces of homogeneous type, in joint work with Karlovych~\cite[Theorem~1]{KS25},
we proved the same necessity assuming the measure~$\mu$ to be Borel-semiregular and 
\emph{reverse doubling}. The latter condition, however, is 
not intrinsic to this endpoint phenomenon. Our first result identifies 
the exact geometric obstruction to the necessity of~$p_->1$: the presence 
of~\emph{atoms}, i.e., points $x\in\Omega$ with $\mu(\{x\})>0$. 
Theorems~\ref{th:KS} and~\ref{th:atom} prove the following characterization: 
\begin{theorem}[Lower endpoint]
Let~$(\Omega,d,\mu)$ be a space of homogeneous type with a Borel-semiregular 
measure~$\mu$. The implication 
\begin{center}
$M$ is bounded on~$L^{p(\cdot)}(\Omega,d,\mu)$
\quad$\implies$\quad 
$p_->1$
\end{center}
holds for every variable exponent~$p(\cdot)$ \emph{if and only if} the 
space~$(\Omega,d,\mu)$ is non-atomic. Equivalently, if the space contains an atom, 
there exists an exponent~$p(\cdot)$ with~$p_-=1$ for which $M$ is bounded.
\end{theorem}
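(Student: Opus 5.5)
The statement splits into two directions, and I would prove them separately, as the forthcoming Theorems~\ref{th:KS} and~\ref{th:atom} suggest.

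\emph{Non-atomic $\Rightarrow$ necessity of $p_->1$.} Suppose $M$ is bounded on $L^{p(\cdot)}$ but $p_-=1$. For each $\varepsilon>0$ the set $E_\varepsilon=\{x: p(x)<1+\varepsilon\}$ has positive measure. Since $\mu$ is Borel-semiregular, Lebesgue differentiation holds, so I can choose a density point $x_0$ of $E_\varepsilon$ and a small ball $B=B(x_0,r)$ with $\mu(B\cap E_\varepsilon)\ge \frac12\mu(B)$. Non-atomicity gives $\mu(\{x_0\})=0$, so $\mu(B(x_0,r))\to0$ as $r\to0$. The aim is a lower bound of order $\log$ for $Mf$ in norm, as in the Euclidean argument of \cite{DHHMS09}. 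Take $f=\chi_{B}$ restricted to $E_\varepsilon$ and use the doubling to estimate $Mf(x)\gtrsim \mu(B)/\mu(B(x_0,d(x,x_0)))$ on annuli $B(x_0,2^{k}r)\setminus B(x_0,2^{k-1}r)$, $k=1,\dots,N$. Because the space is non-atomic near $x_0$, shrinking $r$ lets me fit arbitrarily many annuli of comparable measure growth inside a region where $p$ is close to $1$. This does require some reverse-doubling-type control; here the obstacle is that reverse doubling may fail globally. I expect instead to obtain it locally at a non-atomic point, via the doubling measure and the fact that $\mu(B(x_0,r))\downarrow 0$: one can pick radii $r_k$ with $\mu(B(x_0,r_k))\approx 2^{-k}\mu(B(x_0,r_0))$. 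Then the modular of $Mf$ on $E_\varepsilon\cap\{\text{annuli}\}$ behaves like $\sum_k 2^{k}\mu(B)\cdot 2^{-k(1+\varepsilon)}\cdots$, which is a harmonic-type sum of size $\sim \log(1/\mu(B))$ as $\varepsilon\to0$. This contradicts $\|Mf\|\le C\|f\|$, since $\|f\|$ is comparable to $\mu(B)$ when $p\approx 1$. Concretely, I would redo \cite[Theorem~1]{KS25}, replacing global reverse doubling with this local substitute. That substitution is the main step and the place I expect the difficulty: the radii $r_k$ must both give the measure decay and keep the annuli inside $E_\varepsilon$ with positive proportion, which forces working with $E_\varepsilon$ near a density point at all scales simultaneously.

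\emph{Atom $\Rightarrow$ counterexample.} Let $x_0$ be an atom. I would first show that $\{x_0\}$ is open: $\mu(B(x_0,r))\to\mu(\{x_0\})$, and doubling plus the atom gives a ball $B(x_0,r_0)=\{x_0\}$ up to null sets. Define $p(x_0)=1$ and $p\equiv 2$ elsewhere, so $p_-=1$. To show $M$ is bounded, split $f=f\chi_{\{x_0\}}+f\chi_{\Omega\setminus\{x_0\}}$. The second part is controlled by $M$ on $L^2$. The first part is $c\chi_{\{x_0\}}$, and $M\chi_{\{x_0\}}(x)\le \mu(\{x_0\})/\mu(B(x_0,d(x,x_0)))$. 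Its $L^{p(\cdot)}$ norm is finite provided $\sum_k (\mu(\{x_0\})/\mu(B(x_0,2^k r_0)))^2\,\mu(B(x_0,2^kr_0))<\infty$. This holds when balls grow geometrically; otherwise the space is bounded or the sum is still dominated by a constant, using doubling. Finally I must check that $M$ maps into the right exponent at $x_0$: the value $Mf(x_0)$ is bounded by $\|f\|_{L^{p(\cdot)}}/\mu(\{x_0\})^{1/2}$-type constants, since any ball containing $x_0$ has measure at least $\mu(\{x_0\})>0$. This makes the estimate at the point with $p=1$ harmless.
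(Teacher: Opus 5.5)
Your atomic direction is correct and is essentially the paper's proof: the same exponent, $1$ at the atom and $2$ elsewhere. The isolation of the atom and the series over annuli are unnecessary, though. On a single point $\|g\mathbf{1}_{\{x_0\}}\|_1=\mu(\{x_0\})^{1/2}\|g\mathbf{1}_{\{x_0\}}\|_2$, so $\|\cdot\|_{p(\cdot)}$ and $\|\cdot\|_2$ are equivalent norms, and $L^2$-boundedness of $M$ finishes. In the non-atomic direction, the step you single out as the main difficulty is routine. Since $\{y: d(x_0,y)\le s\}\subseteq B(x_0,2s)$, the function $r\mapsto\mu(B(x_0,r))$ jumps by at most a factor $C_\mu$. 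So below any radius there are radii with geometrically decaying measures. The paper gets this for free from a cone of distinct dyadic cubes, where each strict inclusion loses a fraction $1/C_{\mathrm{D}}$.

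The genuine gap is your final step. The assertion that $\|\mathbf{1}_{B\cap E_\varepsilon}\|_{p(\cdot)}$ is comparable to $\mu(B)$ ``when $p\approx 1$'' fails at small scales. After normalizing by a multiple of $\|f\|_{p(\cdot)}$, the relevant heights are of order $1/\mu(B)$ on $B$ and $1/\mu(B_k)$ on the annuli. For $1\le p(x)<1+\varepsilon$, the quantities $t$ and $t^{p(x)}$ then differ by a factor up to $t^{\varepsilon}$, which is unbounded here. From $p<1+\varepsilon$ on $E_\varepsilon$ you only get $t^{p(x)}\ge t$ for the lower bound on $\|Mf\|_{p(\cdot)}$, and $t^{p(x)}\le t^{1+\varepsilon}$ for the upper bound on $\|f\|_{p(\cdot)}$. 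At best this yields
\[
\frac{\|Mf\|_{p(\cdot)}}{\|f\|_{p(\cdot)}}\gtrsim N\,\mu(B)^{\frac{\varepsilon}{1+\varepsilon}},
\qquad
\mu(B)\le 2^{-N}\mu(B(x_0,r_0)),
\]
where $N$ is the number of annuli inside $B(x_0,r_0)$. Each estimate is sharp in its own region: the first when $p$ is close to $1$ on the annuli, the second when $p$ is close to $1+\varepsilon$ on $B$.

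The density point $x_0$ and radius $r_0$ depend on $\varepsilon$ in an uncontrolled way, so nothing prevents $\mu(B(x_0,r_0))\le e^{-1/\varepsilon^{2}}$. Then the right-hand side tends to $0$ as $\varepsilon\to0$, whatever $N$ is. Letting $\varepsilon\to0$ with $B$ fixed, as your heuristic does, is not available, because $B$ is chosen after $\varepsilon$.

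The missing idea is a test function dominated pointwise by its own maximal function with a factor that blows up, so the variable exponent never has to be compared with $L^1$. The paper takes $f_k=\sum_i \mathbf{1}_{E_k\cap A_i}/(C_{\mathrm{D}}^{i/k}\mu(A_i))$ on the annuli of the cone and proves $M^{\mathscr{D}}f_k\ge f_k/(2C_{\mathrm{D}}^2(1-C_{\mathrm{D}}^{-1/k}))$. It uses $p<1+\frac1k$ on $E_k$ only to check $\varrho_{p(\cdot)}(f_k)<\infty$, and the lattice property then gives the blow-up of the operator norm. The same geometrically damped function on your metric annuli, with the damping base matched to the lower bound on the measure decay, would repair your argument.
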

Next, we turn to the upper endpoint. We generalize a recent Euclidean 
breakthrough by Adamadze, Diening, and Kopaliani~\cite[Theorem~1.1]{ADK26},
who proved that~$M$ is bounded on~$L^{p(\cdot)}(\mathbb{R}^n)$ whenever
$p_->1$ and the exponent~$p(\cdot)$ satisfies the variable Muckenhoupt and Nekvinda 
conditions, with no upper bound imposed on~$p_+$. This result extended the 
bounded-exponent sufficient condition~\cite[Theorem~4.52]{CUF13}, 
whose proof relied on Lerner’s local estimate~\cite[Theorem~1.2]{AL10}.

In joint work with Kopaliani and Tephnadze~\cite[Theorem~1.3]{KST26}, we extended 
Lerner's theorem to spaces of homogeneous type. This led to the open 
question~\cite[Problem~6.5]{KST26} of whether the Muckenhoupt and Nekvinda conditions 
admit an effective analogue in this setting and yield the boundedness of~$M$. 
Our second main result answers this question affirmatively, as Theorem~\ref{th:main-US} 
and Corollary~\ref{cor:criterion} generalize~\cite[Theorem~1.1]{ADK26} to spaces
of homogeneous type:
\begin{theorem}[Upper endpoint]\label{th:main-inf}
Let~$(\Omega,d,\mu)$ be an \emph{unbounded} space of homogeneous type with a 
Borel-semiregular measure~$\mu$. Suppose that a variable exponent~$p(\cdot)$
with $p_->1$ satisfies the following two conditions: 
\begin{enumerate}
\item[\textnormal{(i)}] \emph{(Muckenhoupt)}
$\displaystyle
\,\sup_{B} \tfrac{1}{\mu(B)}
\|\mathbf{1}_B\|_{p(\cdot)} \|\mathbf{1}_B\|_{p'(\cdot)}<\infty$;

\item[\textnormal{(ii)}] \emph{(Nekvinda)}
$\,1\in L^{s(\cdot)}(\Omega,\mu)$ with 
$\frac{1}{s(\cdot)}:=\big|\frac1{p(\cdot)}-\frac1{p_\infty}\big|$
for some~$p_\infty\in[1,\infty]$.
\end{enumerate}
Then~$M$ is bounded on~$L^{p(\cdot)}(\Omega,d,\mu)$. If the space~$(\Omega,d,\mu)$ is 
\emph{bounded}, the Muckenhoupt condition alone is necessary and sufficient for
the boundedness of~$M$ whenever~$p_->1$.
\end{theorem}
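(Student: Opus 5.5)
The plan follows the strategy of Adamadze--Diening--Kopaliani, with Lerner's local estimate in its homogeneous-type form \cite[Theorem~1.3]{KST26} as the main tool, and then reduces the bounded case to the unbounded one.

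\textbf{Step 1: a dyadic model.} Since $\mu$ is Borel-semiregular and doubling, Christ/Hytönen--Kairema dyadic systems are available. By the three-lattice theorem, $Mf$ is pointwise comparable to a finite sum of dyadic maximal operators $M^{\mathscr D}f$. It therefore suffices to bound each $M^{\mathscr D}$ on $L^{p(\cdot)}$.

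\textbf{Step 2: reduction to the modular $\int (M^{\mathscr D}f)^{p(\cdot)}$.} Fix $f\ge0$ with $\|f\|_{p(\cdot)}\le1$. The aim is a bound $\int_\Omega (M^{\mathscr D}f)^{p(x)}\,d\mu\le C$.

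\textbf{Step 3: comparison with the constant exponent $p_\infty$.} The Nekvinda condition (ii) is used in the standard way. Since $1\in L^{s(\cdot)}$, we have $\int_\Omega c^{s(x)}\,d\mu<\infty$ for some $c\in(0,1)$. Together with pointwise Young-type inequalities $t^{p(x)}\lesssim t^{p_\infty}+c^{s(x)}$ (and the reverse), this lets one trade $p(x)$ for $p_\infty$ on the set $\{t\le 1\}$, up to an integrable error.

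This has to be handled separately when $p_\infty=\infty$ or $p_+=\infty$. There $\frac1{s}=\frac1{p}$, so the condition becomes $\int c^{p(x)}<\infty$, and one works with the convention $t^\infty=0$ for $t<1$. I will follow \cite{ADK26}, which manages the unbounded exponent through the decomposition $\{p\le K\}$, $\{p>K\}$ and the conjugate exponent $p'$. Its bounds depend only on the Muckenhoupt constant and on $p_->1$, never on $p_+$.

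\textbf{Step 4: the large part.} For the part of $f$ producing averages above a threshold, use the Calderón--Zygmund stopping cubes of $M^{\mathscr D}$, which are sparse. On each stopping cube $Q$, the Muckenhoupt condition (i), in its dyadic form, yields
\[
\langle f\rangle_Q^{p(\cdot)}\text{-estimates}\quad\text{via}\quad \langle f\rangle_Q\le \mu(Q)^{-1}\|f\mathbf 1_Q\|_{p(\cdot)}\|\mathbf 1_Q\|_{p'(\cdot)}\lesssim \|f\mathbf 1_Q\|_{p(\cdot)}/\|\mathbf 1_Q\|_{p(\cdot)}.
\]
From this I would deduce a "local $A_\infty$" or Lerner-type bound, namely $\|\sum_Q\langle f\rangle_Q\mathbf 1_{E_Q}\|_{p(\cdot)}\lesssim\|f\|_{p(\cdot)}$. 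The sparse sets $E_Q$ are summed using $p_->1$ through a duality argument against $g\in L^{p'(\cdot)}$. The dual average is controlled by (i) once more.

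The main obstacle is the uniformity in $p_+=\infty$ in this step. The classical argument of \cite[Theorem~4.52]{CUF13} needs $p_+<\infty$ to pass from norms to modulars. My first attempt would be to follow \cite{ADK26} and work entirely at the level of norms of characteristic functions. One avoids modulars where $p$ is large, and uses the fact that there $\|\mathbf 1_Q\|_{p(\cdot)}$ behaves like $\|\mathbf 1_Q\|_\infty$, so that $M$ is trivially bounded on $L^\infty$. The quasi-metric constant and the doubling constant only enter through the dyadic structure.

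\textbf{Step 5: the bounded case.} If $\Omega$ is bounded, then $\mu(\Omega)<\infty$. Necessity of (i) is standard: test $M$ on $f=\mathbf 1_B g$ and apply duality, as in the Euclidean case.

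For sufficiency, take $p_\infty=p_-$, or any constant. The Nekvinda integral is then $\int_\Omega c^{s(x)}\,d\mu\le\mu(\Omega)<\infty$, so (ii) holds automatically. Step 3 never used unboundedness, except possibly through the dyadic top cubes. When $\Omega$ is bounded, these are finitely many cubes of measure comparable to $\mu(\Omega)$, and averages over them are bounded by (i) applied to $B=\Omega$. So the bounded case reduces to the previous argument with this extra, harmless term.
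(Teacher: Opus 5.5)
Your proposal has a genuine gap, and it sits where you place the main obstacle. It never gives an argument that survives $p_+=\infty$, and two of its steps are either false or only asserted.

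\emph{Step~2 is false as stated.} The target $\int_\Omega (M^{\mathscr D}f)^{p(x)}\,d\mu\le C$ for all $\|f\|_{p(\cdot)}\le1$ fails in general when $\{p=\infty\}$ has positive measure. Under the convention $t^\infty=\infty$ for $t>1$, it would force $M^{\mathscr D}f\le1$ a.e.\ on $\{p=\infty\}$. Concretely, on $\mathbb R$ with the standard dyadic grid, let $1/p(x)=\tfrac12\min\{1,\operatorname{dist}(x,[0,\tfrac12])\}$. This exponent is globally log-H\"older with $p_-=2$, so (i) and (ii) hold. Take $f=\mathbf 1_{[0,3/4)}+3^{1/8}\,\mathbf 1_{[3/4,1)}$. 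Then $\varrho_{p(\cdot)}(f)\le\tfrac14+\tfrac34=1$, but $|f|_{[0,1)}>1$, so $\varrho_{p(\cdot)}(M^{\mathscr D}f)=\infty$. For bounded exponents, turning the norm bound you actually need into a modular bound costs a power $p_+$, which is exactly the dependence you must avoid.

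\emph{Step~4 is only asserted.} You claim the sparse estimate $\|\sum_Q\langle f\rangle_Q\mathbf 1_{E_Q}\|_{p(\cdot)}\lesssim\|f\|_{p(\cdot)}$ follows from the single-cube bound $\langle f\rangle_Q\lesssim\|f\mathbf 1_Q\|_{p(\cdot)}/\|\mathbf 1_Q\|_{p(\cdot)}$. A per-cube Muckenhoupt estimate does not sum over a sparse family by itself; you need some decay across the stopping generations.

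\emph{The $p_+=\infty$ device does not work.} The $\{p\le K\}/\{p>K\}$ heuristic with ``$M$ is bounded on $L^\infty$'' fails because $M^{\mathscr D}$ mixes the two regions. Bounding $M^{\mathscr D}(f\mathbf 1_{\{p\le K\}})$ on $\{p>K\}$ is the hard part, and your plan does not address it.

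The paper supplies both missing pieces.

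\emph{Decay mechanism.} It dualizes at once and only estimates modulars of $p'(\cdot)$, whose bound $(p')_+<\infty$ comes from $p_->1$. It writes $\int M^{\mathscr D}f\,g\le a\int|f|\,Tg$ with $Tg=\sum_{k,j}\alpha_j^k(g)\mathbf 1_{Q_j^k}$ over the Calder\'on--Zygmund cubes, and slices $Tg$ along $D_{k+\ell}=\Omega_{k+\ell}\setminus\Omega_{k+\ell+1}$.
\begin{enumerate}
\item[(1)] Averages $\alpha_j^k(g)>1$ are handled by Lerner's dynamic-parameter Jensen inequality (Lemma~\ref{le:Jensen}) together with the uniform $A_\infty^{\mathscr D}$ power decay of $(\lambda t)^{p'(\cdot)}$ (Lemma~\ref{le:power-decay}).
\item[(2)] Averages $\alpha_j^k(g)\le1$ are handled by the Nekvinda embeddings (Lemma~\ref{le:N-emb}) after splitting $g$ at height $1$.
\end{enumerate}
Every piece then decays at least like $a^{-\varepsilon\ell/(p')_+}$.

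\emph{Reaching $p_+=\infty$.} The Jensen inequality needs $p_+<\infty$, so the paper approximates $p$ by $1/p_n=\frac1{n+1}+\frac{n-1}{n+1}\frac1p$. These satisfy $(p_n')_+\le\max\{(p')_+,2\}$, $[p_n]_{\mathcal A}\le8[p]_{\mathcal A}$ and $[p_n]_{\mathcal N}\le2[p]_{\mathcal N}$. The bounded-exponent constant depends only on upper bounds of these quantities (Corollary~\ref{cor:aux-US}). One then passes to the limit via $\|h\|_{\tilde{\varrho}_{p_n(\cdot)}}\to\|h\|_{\tilde{\varrho}_{p(\cdot)}}$ for $h\in L^2$, applied to truncations $f_k$ and to $M^{\mathscr D}f_k$, followed by the Fatou property. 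This approximation, not a $\{p\le K\}$ splitting, is also the device of \cite{ADK26} (its Lemma~4.4).

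Your Step~1 and the necessity argument are fine. So is your Step~5 reduction: noting that Nekvinda holds automatically when $\mu(\Omega)<\infty$ is a legitimate alternative to the paper's direct estimate of small averages, and you also cut off the levels at $a^k>|f|_\Omega$. But all of this inherits the gap above.
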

Our proof adapts the framework of~\cite{ADK26} but requires 
several non-Euclidean ingredients. The adjacent dyadic grids of
Hyt\"onen and Kairema~\cite{HK12} reduce the classical maximal operator to finitely 
many dyadic ones, while a dyadic Calder\'on--Zygmund decomposition supplies 
the power-decay characterization for local~$A_\infty$ weights defined 
directly on the dyadic tree. The approximation of an unbounded exponent by bounded 
ones~\cite[Lemma~4.4]{ADK26} is purely measure-theoretic and survives unchanged once 
the Muckenhoupt and Nekvinda characteristics are available.

We emphasize a few technical improvements. First, our refined 
Lemma~\ref{le:power-decay} 
justifies a scaled-weight transition left unexplained in~\cite[Proposition~5.4]{ADK26};
see \emph{Remark} following the lemma. Second, Corollary~\ref{cor:aux-US} explicitly 
shows that the bounded-exponent norm estimate depends solely on the \emph{upper bounds} 
of the exponent's characteristics. Since every exponent provides these 
uniform bounds for its approximating sequence, this corollary enables
passing to the limit for~$p_+=\infty$. We also streamline the proof of 
Theorem~\ref{th:aux-US} by isolating the elementary Nekvinda embeddings into 
Lemma~\ref{le:N-emb}, and we carefully separate the cases of bounded and unbounded 
spaces to handle their different geometries.

Finally, we resolve~\cite[Problem~6.4]{KST26}, which asks 
whether the Muckenhoupt condition is strictly insufficient on every
unbounded space of homogeneous type. The answer is negative. \emph{We construct an 
unbounded atomic space~$(\Omega,d,\mu)$ where the Muckenhoupt condition alone ensures 
the boundedness of~$M$ on~$L^{p(\cdot)}(\Omega,d,\mu)$} for any exponent 
with $1<p_-\le p_+<\infty$. This example excludes the endpoints but 
contextualizes Theorem~\ref{th:main-inf}. It also serves as an application of our 
recent characterization of the maximal operator's boundedness via uniform estimates for 
dyadic averaging operators~\cite[Theorem~1.2]{AS26}.

The paper is organized as follows. Section~\ref{sec:var-Lp} collects the 
variable-exponent preliminaries and introduces the Nekvinda condition. 
Section~\ref{sec:SHT} recalls spaces of homogeneous type, Hyt\"onen--Kairema
dyadic cubes, and the standard and dyadic Muckenhoupt conditions. 
Section~\ref{sec:p-=1} proves the atomic classification at the lower endpoint. 
Sections~\ref{sec:Jensen} and~\ref{sec:p+<inf} develop the machinery for the main 
boundedness theorems, which are proved in Section~\ref{sec:main-ADK}. 
Section~\ref{sec:US-Muck-suff} presents an unbounded atomic example where the 
Muckenhoupt condition suffices.

\section{Variable Exponents and the Nekvinda Condition}
\label{sec:var-Lp}
Let $(\Omega,\mu)$ be a $\sigma$-finite, complete measure space, and denote 
by~$L^0(\Omega,\mu)$ the space of all $\mu$-measurable functions on~$\Omega$.
Let~$\mathcal{P}(\Omega,\mu)\subseteq L^0(\Omega,\mu)$ be the set of 
all~\emph{variable exponents}
\[
p(\cdot)\,:\,\Omega\to[1,\infty].
\]
We characterize each~$p(\cdot)\in\mathcal{P}(\Omega,\mu)$ by its essential bounds
\[
p_-:=\operatornamewithlimits{ess\,inf}_{x\in\Omega}p(x)
\quad\text{and}\quad
p_+:=\operatornamewithlimits{ess\,sup}_{x\in\Omega}p(x).
\]
The \emph{conjugate exponent}~$p'(\cdot)\in\mathcal{P}(\Omega,\mu)$
is defined pointwise by the relation $1/p(\cdot)+1/p'(\cdot)=1$,
where it is understood that $1/\infty=0$.

Every exponent~$p(\cdot)\in\mathcal{P}(\Omega,\mu)$ 
induces the \emph{semimodulars}~$\varrho_{p(\cdot)}$ and~$\tilde{\varrho}_{p(\cdot)}$,
defined for all~$f\in L^0(\Omega,\mu)$ by
\[
\varrho_{p(\cdot)}(f):=\int_\Omega |f(x)|^{p(x)}\,d\mu(x)
\quad\textnormal{and}\quad
\tilde{\varrho}_{p(\cdot)}(f):=\int_\Omega \frac{|f(x)|^{p(x)}}{p(x)}\,d\mu(x).
\]
To evaluate the integrands when $p(x) = \infty$, we adopt the
convention that $t^\infty=t^\infty/\infty=0$ for~$t\in[0,1]$, and
$t^\infty=t^\infty/\infty=\infty$ for~$t>1$. Consequently, the semimodulars 
are \emph{semi-} in the sense that $\varrho_{p(\cdot)}(f)=0$ or 
$\tilde{\varrho}_{p(\cdot)}(f)=0$ need not imply $f=0$.
\begin{definition}
Given an exponent $p(\cdot)\in\mathcal{P}(\Omega,\mu)$, the 
\emph{variable Lebesgue space~$L^{p(\cdot)}(\Omega,\mu)$} consists of all
functions $f\in L^0(\Omega,\mu)$ such that
\[
\|f\|_{p(\cdot)}:=
\inf\big\{\lambda>0\,:\,\varrho_{p(\cdot)}(f/\lambda)\le1\big\}<\infty.
\]
\end{definition}
Equipped with this norm, $L^{p(\cdot)}(\Omega,\mu)$ is a Banach function 
space~\cite[Theorem~3.2.13]{DHHR11} that recovers the classical Lebesgue space when 
$p(\cdot)$ is constant. Alternatively, one can define the space 
via the functional
\[
\|f\|_{\tilde{\varrho}_{p(\cdot)}}:=
\inf\big\{\lambda>0\,:\,\tilde{\varrho}_{p(\cdot)}(f/\lambda)\le1\big\},
\]
which yields an equivalent norm~\cite[p.~74]{DHHR11} satisfying
\begin{equation}\label{eq:norms}
\|f\|_{\tilde{\varrho}_{p(\cdot)}}
\le\|f\|_{p(\cdot)}\le2\,\|f\|_{\tilde{\varrho}_{p(\cdot)}}.
\end{equation}

The following theorem, which collects fundamental properties of variable
Lebesgue spaces, will serve as a toolbox for our subsequent proofs.
\begin{theorem}[Toolbox for variable exponents]\label{th:toolbox}
Let $p(\cdot), q(\cdot),s(\cdot)\in\mathcal{P}(\Omega,\mu)$. The following properties 
hold:
\begin{enumerate}

\item[\textnormal{(a)}] \emph{(unit ball property)} 
For all $f\in L^{p(\cdot)}(\Omega,\mu)$,
\[
\|f\|_{p(\cdot)}\le1
\quad\text{if and only if}\quad
\varrho_{p(\cdot)}(f)\le 1.
\]
The same equivalence holds for the semimodular $\tilde{\varrho}_{p(\cdot)}$ and its 
associated norm.

\item[\textnormal{(b)}] \emph{(unit sphere property)} If $p_+<\infty$, then
$\|f\|_{p(\cdot)}=1$ is equivalent to $\varrho_{p(\cdot)}(f)=1$.

\item[\textnormal{(c)}] \emph{(semimodular-to-norm bound)} If 
$\varrho_{p(\cdot)}(f)\le ab$ for some $a\ge1$ and $b\in(0,1]$, 
then $\|f\|_{p(\cdot)}\le ab^{\frac1{p_+}}$.

\item[\textnormal{(d)}] \emph{(H\"older's inequality)} Let 
$1/s(\cdot)=1/p(\cdot)+1/q(\cdot)$.
Then for all $f\in L^{p(\cdot)}(\Omega,\mu)$ and $g\in L^{q(\cdot)}(\Omega,\mu)$,
\[
\|fg\|_{s(\cdot)}\le2\,\|f\|_{p(\cdot)}\|g\|_{q(\cdot)}.
\]

\item[\textnormal{(e)}] \emph{(norm conjugate formula)}
For all $f\in L^0(\Omega,\mu)$,
\[
\|f\|_{p(\cdot)}\le2\sup_{\|g\|_{p'(\cdot)}\le1}
\int_\Omega |f(x)\,g(x)|\,d\mu(x),
\]
where the supremum remains unchanged if restricted to bounded functions $g$ 
supported on a set of finite measure.

\item[\textnormal{(f)}] \emph{(squeezed embedding)} If 
$s(\cdot)\le p(\cdot)\le q(\cdot)$, then for all 
$f\in L^{s(\cdot)}(\Omega,\mu)\cap L^{q(\cdot)}(\Omega,\mu)$,
\[
\|f\|_{p(\cdot)}\le 2\,\max\{\|f\|_{s(\cdot)},\|f\|_{q(\cdot)}\}.
\]

\item[\textnormal{(g)}] \emph{(Fatou property)} For $f_k,f\in L^0(\Omega,\mu)$, 
if $0\le f_k\nearrow f$ $\mu$-almost everywhere, then
\[
\|f\|_{p(\cdot)}=\lim_{k\to\infty}\|f_k\|_{p(\cdot)}
\quad\text{and}\quad
\|f\|_{\tilde{\varrho}_{p(\cdot)}}=\lim_{k\to\infty}\|f_k\|_{\tilde{\varrho}_{p(\cdot)}}.
\]

\item[\textnormal{(h)}] \emph{(Riesz--Fischer property)} For any nonnegative sequence 
$\{f_k\}_{k=0}^\infty\subseteq L^0(\Omega,\mu)$, 
\[
\bigg\|\sum_{k=0}^\infty f_k\bigg\|_{p(\cdot)}\le \sum_{k=0}^\infty \|f_k\|_{p(\cdot)}.
\]

\item[\textnormal{(i)}] \emph{(power-type rescaling)} For all $s\ge1$ and 
$f\in L^{sp(\cdot)}(\Omega,\mu)$, 
\[
\|\,|f|^s\|_{p(\cdot)}=\|f\|_{sp(\cdot)}^s.
\]

\item[\textnormal{(j)}] \emph{(interpolation estimate)} 
Let $1/p(\cdot)=\theta/s(\cdot)+(1-\theta)/q(\cdot)$ for some $\theta\in[0,1]$. 
Then for all $f\in L^{s(\cdot)}(\Omega,\mu)\cap L^{q(\cdot)}(\Omega,\mu)$,
\[
\|f\|_{p(\cdot)}\le2\,\|f\|_{s(\cdot)}^\theta\|f\|_{q(\cdot)}^{1-\theta}.
\]

\item[\textnormal{(k)}] \emph{(lower semi-continuity)} Let a sequence of 
exponents $p_n(\cdot)\in\mathcal{P}(\Omega,\mu)$ be such that $p_n(\cdot)\to p(\cdot)$
$\mu$-almost everywhere. Then for all $f\in L^0(\Omega,\mu)$,
\[
\|f\|_{p(\cdot)}\le\liminf_{n\to\infty}\|f\|_{p_n(\cdot)}
\quad\text{and}\quad
\|f\|_{\tilde{\varrho}_{p(\cdot)}}\le\liminf_{n\to\infty}
\|f\|_{\tilde{\varrho}_{p_n(\cdot)}}.
\]

\item[\textnormal{(l)}] \emph{(semimodular convergence)} Let 
$p_n(\cdot)\in\mathcal{P}(\Omega,\mu)$ be squeezed as
$s(\cdot)\le p_n(\cdot)\le q(\cdot)$, and let $p_n(\cdot)\to p(\cdot)$
$\mu$-almost everywhere. Then for all $f\in L^0(\Omega,\mu)$ satisfying
$\tilde{\varrho}_{s(\cdot)}(f)+\tilde{\varrho}_{q(\cdot)}(f)<\infty$,
\[
\lim_{n\to\infty}
\tilde{\varrho}_{p_n(\cdot)}(f)
=\tilde{\varrho}_{p(\cdot)}(f).
\]

\item[\textnormal{(m)}] \emph{(convexity scaling)} For any $\lambda\in(0,1)$
and $f\in L^0(\Omega,\mu)$,
\[
\tilde{\varrho}_{p(\cdot)}(\lambda f)\le\lambda\,\tilde{\varrho}_{p(\cdot)}(f).
\]

\end{enumerate}
\end{theorem}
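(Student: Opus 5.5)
This toolbox collects standard facts, most of them in \cite{DHHR11}. The plan is therefore to prove only what is not literally in the literature and to cite the rest. The whole argument rests on two elementary features of $\varrho_{p(\cdot)}$ and $\tilde\varrho_{p(\cdot)}$ under the convention for $p(x)=\infty$.
\begin{enumerate}
\item[(1)] They are left-continuous and convex in the scalar: $\lambda\mapsto\varrho_{p(\cdot)}(\lambda f)$ is nondecreasing and convex, and $\varrho_{p(\cdot)}(\lambda f)\le\lambda\varrho_{p(\cdot)}(f)$ for $\lambda\in(0,1]$.
\item[(2)] Monotone convergence holds on $\Omega_\infty:=\{p=\infty\}$ as well, because $t\mapsto t^\infty$ is left-continuous and nondecreasing.
\end{enumerate}

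\textbf{Properties (m), (a), (b), (c).} Item (m) is (1) applied pointwise: $\lambda^{p(x)}\le\lambda$ for $p(x)\ge1$, including $p(x)=\infty$. Item (a) follows from left-continuity of $\lambda\mapsto\varrho(f/\lambda)$: if $\|f\|\le1$, then $\varrho(f/\lambda)\le1$ for all $\lambda>1$, and letting $\lambda\downarrow1$ with monotone convergence gives $\varrho(f)\le1$. Item (b) follows because for $p_+<\infty$ the map $\lambda\mapsto\varrho(\lambda f)$ is continuous and strictly increasing wherever it is finite and nonzero.

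For (c), I would estimate $\varrho\big(f/(ab^{1/p_+})\big)\le a^{-1}b^{-1}\varrho(f)\le1$. This uses $(ab^{1/p_+})^{-p(x)}\le a^{-1}b^{-p(x)/p_+}\le a^{-1}b^{-1}$, valid since $a\ge1$, $b\le1$, and $p(x)/p_+\le1$. If $p_+=\infty$, then $b^{1/p_+}=1$ and the claim reduces to (m).

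\textbf{Properties (d), (e), (i), (f), (j).} Item (d) is Young's inequality $|fg|^{s}/s\le |f|^{p}/p+|g|^{q}/q$ taken pointwise. Applied to normalized $f,g$ it gives $\tilde\varrho_{s}(fg)\le 1$ after the usual division, and the factor $2$ comes from \eqref{eq:norms}. This is \cite[Lemma 3.2.20]{DHHR11}. Item (e) is the norm conjugate formula \cite[Corollary 3.2.14]{DHHR11}. The restriction to bounded $g$ with finite-measure support follows from $\sigma$-finiteness and the Fatou property (g): truncate $g$ and exhaust $\Omega$, using monotone convergence in $\int|fg|$. Item (i) is immediate from $\varrho_{p(\cdot)}(|f|^s/\lambda^s)=\varrho_{sp(\cdot)}(f/\lambda)$. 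Item (f) follows from the pointwise bound $t^{p}\le t^{s}+t^{q}$ and convexity, which gives $\varrho_p(f/\lambda)\le\frac12(\varrho_s+\varrho_q)\le1$ for $\lambda=2\max\{\cdot\}$. Item (j) follows from (d) and (i) after writing $|f|=|f|^\theta|f|^{1-\theta}$, with exponents $s/\theta$ and $q/(1-\theta)$.

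\textbf{Properties (g), (h).} Item (g) is monotone convergence inside $\varrho(f_k/\lambda)$ combined with (a) \cite[Lemma 3.2.8]{DHHR11}. Item (h) is the triangle inequality for finite sums, passed to the limit by (g).

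\textbf{Properties (k), (l).} Item (k) is Fatou's lemma: for a.e. $x$ we have $\liminf_n|t|^{p_n(x)}/p_n(x)\ge |t|^{p(x)}/p(x)$, which must also be checked where $p(x)=\infty$. Given $\lambda>\liminf_n\|f\|_{p_n}$, one has $\varrho_{p_n}(f/\lambda)\le1$ along a subsequence, so $\varrho_p(f/\lambda)\le1$. Item (l) is dominated convergence with dominating function $|f|^s/s+|f|^q/q$, using $t^{r}/r\le t^s/s+t^q/q$ for $s\le r\le q$.

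\textbf{Main obstacle.} The delicate point is pointwise convergence of $t^{p_n}/p_n$ at points where $p(x)=\infty$ in (k) and (l). Under the adopted convention the limit equals $0$ for $t\le1$ and $\infty$ for $t>1$. For $t<1$ and $t>1$ the convergence $t^{p_n}/p_n\to0$, respectively $\to\infty$, is clear. At $t=1$ we get $1/p_n\to0=1^\infty/\infty$, which is consistent. For (k) only the $\liminf$ inequality is needed, and it holds in all cases; I would verify these cases explicitly.
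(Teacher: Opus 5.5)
Your proposal is correct in substance, and it is more self-contained than the paper's proof. The paper cites \cite{DHHR11} for (a), (b), (d), (e), (f), (g), (k), (l), (m) and calls (c) and (i) immediate. It deduces (h) from (g) as in \cite{BS88} and obtains (j) from (d) and (i) via $|f|=|f|^\theta|f|^{1-\theta}$, exactly as you do. You instead derive nearly everything from the scalar convexity and left-continuity of the semimodulars. This makes explicit where the convention for $p(x)=\infty$ enters. For instance, your remark that $1/p_n\to0=1^\infty/\infty$ while $1^{p_n}=1\not\to0$ explains why (l) is stated for $\tilde\varrho_{p(\cdot)}$ and why (k) needs only a $\liminf$ bound.

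Your arguments for (a)--(c), the truncation part of (e), (f), and (g)--(m) go through. In (l), however, the domination $t^r/r\le t^s/s+t^q/q$ is not immediate for $t>1$, since there $1/r\ge 1/q$. It follows from the log-convexity of $r\mapsto t^r/r$, which gives $t^r/r\le\max\{t^s/s,t^q/q\}$.

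The one step that fails as written is your accounting of the constant in (d). Young's inequality in the $\tilde\varrho$-form, for $\|f\|_{\tilde\varrho_{p(\cdot)}},\|g\|_{\tilde\varrho_{q(\cdot)}}\le1$, gives only $\tilde\varrho_{s(\cdot)}(fg)\le2$. Dividing by $2$ via (m) yields $\|fg\|_{\tilde\varrho_{s(\cdot)}}\le2$, and \eqref{eq:norms} then costs a second factor $2$, so your route produces the constant $4$.

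The first factor cannot be normalized away. Take $\Omega=A\cup B$ with $p=1$, $q=\infty$ on $A$ and $p=\infty$, $q=1$ on $B$, so $s\equiv1$. Let $f=\mathbf{1}_{A'}/\mu(A')+\mathbf{1}_B$ and $g=\mathbf{1}_A+\mathbf{1}_{B'}/\mu(B')$, where $A'\subseteq A$ and $B'\subseteq B$ have positive finite measure. Then $f$ and $g$ have norm $1$ in both norms, while $\int|fg|\,d\mu=2$.

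To get the constant $2$, run the same Young-type argument with $\varrho$ itself instead of passing through \eqref{eq:norms}. Pointwise, $|fg|^{s}\le\frac{s}{p}|f|^{p}+\frac{s}{q}|g|^{q}$; on $\{p=\infty\}$ or $\{q=\infty\}$, use $|f|\le1$, respectively $|g|\le1$, a.e. Hence $\|f\|_{p(\cdot)},\|g\|_{q(\cdot)}\le1$ imply $\varrho_{s(\cdot)}(fg)\le2$, and convexity gives $\varrho_{s(\cdot)}(fg/2)\le1$.

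Since you also cite \cite[Lemma~3.2.20]{DHHR11}, the statement itself is covered. Your sketch alone only yields $4$, though, and the exact constant is what the paper's normalizations rely on, such as $\|f\|_{q(\cdot)}\le\frac12$ in Lemma~\ref{le:Jensen}.
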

\begin{proof}
The listed properties of variable Lebesgue spaces are well known, and many of them can 
be found in the monograph~\cite{DHHR11}: 
(a) and~(b) correspond to Lemma~2.1.14; 
(d)~is part of Lemma~3.2.20; 
(e)~is Corollary~3.2.14;
(f)~is part of Theorem~3.3.11; 
(g)~follows from Theorem~2.3.17;
(k)~is Corollary~3.5.4;
(l)~is Lemma~3.5.5;
(m)~is implicitly stated in Section~3.2.

Let us comment on the rest of the statements. Properties~(c) and~(i) follow immediately 
from the definitions. The Riesz--Fischer property~(h) is deduced from the Fatou 
property~(g) exactly as in~\cite[Chapter~1, Theorem~1.6]{BS88}. Interpolation 
estimate~(j) follows by the factorization $|f|=|f|^\theta\,|f|^{1-\theta}$ and 
subsequent application of H\"older's inequality~(d) and rescaling~(i).
\end{proof}

When $\mu(\Omega)=\infty$, it is useful to control the behavior of variable exponents
analogously to how they are restricted \emph{at infinity} in Euclidean spaces. This is
achieved via the Nekvinda condition, originally introduced 
in~\cite[Definition~2.13]{AN04} as an explicit integral decay requirement
on~$\mathbb{R}^n$, which naturally translates to general measure spaces in the 
functional form noted in~\cite[Remark~4.2.8]{DHHR11}.
\begin{definition}
Let $\mu(\Omega)=\infty$. An exponent $p(\cdot)\in\mathcal{P}(\Omega,\mu)$ satisfies 
the \emph{Nekvinda condition}, denoted $p(\cdot)\in\mathcal{N}(\Omega,\mu)$, if there 
exists $p_\infty\in[1,\infty]$ such that $1\in L^{s(\cdot)}(\Omega,\mu)$, where 
\begin{equation}\label{cond:N}
\frac1{s(x)}:=\bigg|\frac1{p(x)}-\frac1{p_\infty}\bigg|,
\quad x\in\Omega.
\end{equation}
The corresponding characteristic is $[p]_\mathcal{N}:=\|1\|_{s(\cdot)}$.
\end{definition}
Note that $[p]_\mathcal{N}\ge1$, since $\varrho_{s(\cdot)}(t)\ge\mu(\Omega)=\infty$
for all $t>1$. Moreover, $p_\infty\in[p_-,p_+]$. If $p(\cdot)$ satisfies the Nekvinda 
condition with parameter~$p_\infty$, then $p'(\cdot)$ also satisfies it with 
parameter~$p'_\infty$, and $[p]_\mathcal{N}=[p']_\mathcal{N}$. 
The condition $\mathcal{N}$ also implies the following norm bounds.
\begin{lemma}[Elementary Nekvinda embeddings]\label{le:N-emb}
Let $q(\cdot)\in\mathcal{N}(\Omega,\mu)$ with parameter~$q_\infty$. The following hold:
\begin{enumerate}
\item[\textnormal{(a)}] For any $f\in L^{q_\infty}(\Omega,\mu)\cap L^{q_+}(\Omega,\mu)$,
\[
\|f\|_{q(\cdot)}\le 4[q]_\mathcal{N}\,\max\{\|f\|_{q_\infty},\|f\|_{q_+}\}.
\]

\item[\textnormal{(b)}] If $0\le g\le1$ and $\|g\|_{q(\cdot)}\le\frac12$, then 
\[
\max\{\|g\|_{q_\infty},\|g\|_{q_+}\}\le 3[q]_\mathcal{N}.
\]
\end{enumerate}
\end{lemma}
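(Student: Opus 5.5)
We split $\Omega$ according to the sign of $\frac1{q(\cdot)}-\frac1{q_\infty}$ and use on each piece either H\"older's inequality, Theorem~\ref{th:toolbox}(d), with the Nekvinda exponent $s(\cdot)$ from \eqref{cond:N}, or a pointwise comparison of powers. Set
\[
\Omega_1:=\{x\in\Omega: q(x)\le q_\infty\},\qquad \Omega_2:=\{x\in\Omega: q(x)>q_\infty\}.
\]
On $\Omega_1$ we have the exact identity $\frac1{q(x)}=\frac1{q_\infty}+\frac1{s(x)}$. On $\Omega_2$ we have instead $\frac1{q_\infty}=\frac1{q(x)}+\frac1{s(x)}$. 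Throughout we use $[q]_\mathcal{N}=\|1\|_{s(\cdot)}\ge1$, and restricting to a subset does not increase the norm, so $\|\mathbf 1_{\Omega_j}\|_{s(\cdot)}\le[q]_\mathcal{N}$.

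For (a), we write $f=f\mathbf 1_{\Omega_1}+f\mathbf 1_{\Omega_2}$ and use the triangle inequality.
\begin{itemize}
\item On $\Omega_1$, H\"older's inequality (d), applied with exponents $q_\infty$ and $s(\cdot)$ restricted to $\Omega_1$, gives $\|f\mathbf 1_{\Omega_1}\|_{q(\cdot)}\le 2\|f\|_{q_\infty}\|\mathbf 1_{\Omega_1}\|_{s(\cdot)}\le 2[q]_\mathcal{N}\|f\|_{q_\infty}$.
\item On $\Omega_2$ the exponent is squeezed, $q_\infty\le q(\cdot)\le q_+$. The squeezed embedding (f), applied to $f\mathbf 1_{\Omega_2}$ with the constant exponents $q_\infty$ and $q_+$, gives $\|f\mathbf 1_{\Omega_2}\|_{q(\cdot)}\le 2\max\{\|f\|_{q_\infty},\|f\|_{q_+}\}$.
\end{itemize}
Adding the two bounds and using $2\le 2[q]_\mathcal{N}$ yields the constant $4[q]_\mathcal{N}$.

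For (b), we again split into $\Omega_1$ and $\Omega_2$, now exploiting $0\le g\le1$.
\begin{itemize}
\item For the $q_+$-norm: since $g\le1$ and $q(\cdot)\le q_+$, we have $g^{q_+}\le g^{q(x)}$ pointwise. Hence $\varrho_{q_+}(g)\le\varrho_{q(\cdot)}(g)\le1$ by the unit ball property (a), so $\|g\|_{q_+}\le1$. If $q_+=\infty$, the bound $\|g\|_\infty\le1$ is immediate.
\item For the $q_\infty$-norm on $\Omega_1$: $q(\cdot)\le q_\infty$ there, so the same pointwise comparison gives $\|g\mathbf 1_{\Omega_1}\|_{q_\infty}\le1$.
\item For the $q_\infty$-norm on $\Omega_2$: H\"older (d) with $\frac1{q_\infty}=\frac1{q(\cdot)}+\frac1{s(\cdot)}$ gives $\|g\mathbf 1_{\Omega_2}\|_{q_\infty}\le 2\|g\|_{q(\cdot)}[q]_\mathcal{N}\le[q]_\mathcal{N}$.
\end{itemize}
Hence $\|g\|_{q_\infty}\le 1+[q]_\mathcal{N}\le 2[q]_\mathcal{N}$, and both quantities are bounded by $3[q]_\mathcal{N}$.

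The only delicate point is the bookkeeping with infinite values: $q_\infty=\infty$, points where $q(x)=\infty$, and points where $q(x)=q_\infty$, so that $s(x)=\infty$. I would check the following.
\begin{itemize}
\item H\"older (d) is stated for arbitrary exponents in $\mathcal P(\Omega,\mu)$, with the convention $1/\infty=0$, so it covers these cases.
\item The pointwise inequality $g^{a}\le g^{b}$ for $a\ge b$ and $0\le g\le 1$ remains valid under the conventions $t^\infty=0$ for $t\in[0,1]$.
\item When $q_\infty=\infty$ in part (b), the case $\Omega_2$ is empty, and on $\Omega_1$ we use $g\le1$ directly.
\end{itemize}
I expect no genuine obstacle beyond verifying these conventions.
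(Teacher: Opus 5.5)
Your proof is correct and follows the paper's argument. It uses the same splitting by the sign of $1/q(\cdot)-1/q_\infty$, with H\"older's inequality against the Nekvinda exponent on one piece and the squeezed embedding on the other for (a), and H\"older on $\{q(\cdot)>q_\infty\}$ for (b). The only deviation is in (b), where you bound $\|g\|_{q_+}$ and $\|g\mathbf{1}_{\Omega_1}\|_{q_\infty}$ by a pointwise comparison of powers using $0\le g\le1$, instead of the squeezed embedding with $\|g\|_\infty\le1$; this even gives the slightly sharper constant $2[q]_\mathcal{N}$.
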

\begin{proof}
Let $s(\cdot)$ be the exponent~\eqref{cond:N} from the Nekvinda condition associated 
with $q(\cdot)$. Partition~$\Omega$ into $\Omega_-:=\{q(\cdot)<q_\infty\}$ and
$\Omega_+:=\{\,q(\cdot)\ge q_\infty\}$. We first prove~(a). On~$\Omega_-$, we have 
$1/q(\cdot)=1/q_\infty+1/s(\cdot)$, and H\"older's
inequality yields
\[
\|f\mathbf{1}_{\Omega_-}\|_{q(\cdot)}
\le 2\,\|f\|_{q_\infty}\|\mathbf{1}_{\Omega_-}\|_{s(\cdot)}
\le 2[q]_\mathcal{N}\,\|f\|_{q_\infty}.
\]
On $\Omega_+$, the exponent is squeezed as $q_\infty\le q(\cdot)\le q_+$. 
Theorem~\ref{th:toolbox}(f) thus implies
\[
\|f\mathbf{1}_{\Omega_+}\|_{q(\cdot)}\le 2\,\max\{\|f\|_{q_\infty},\|f\|_{q_+}\}.
\]
Summing these estimates and using $[q]_\mathcal{N}\ge 1$ establishes~(a).

For~(b), applying H\"older's inequality on~$\Omega_+$ gives
\[
\|g\mathbf{1}_{\Omega_+}\|_{q_\infty}\le 2\,\|\mathbf{1}_{\Omega_+}\|_{s(\cdot)}
\|g\|_{q(\cdot)}\le[q]_\mathcal{N}.
\]
On $\Omega_-$, the squeeze $q(\cdot)<q_\infty\le\infty$ combined with the bounds
$\|g\|_{q(\cdot)}\le\frac12$ and $\|g\|_\infty\le1$ ensures that
$\|g\mathbf{1}_{\Omega_-}\|_{q_\infty}\le2$. Summing these parts gives 
$\|g\|_{q_\infty}\le [q]_\mathcal{N}+2\le 3[q]_\mathcal{N}$. 
Finally, the global bound $q(\cdot)\le q_+\le\infty$ yields 
$\|g\|_{q_+}\le 2$, completing the proof.
\end{proof}

\section{Dyadic Grids and the Muckenhoupt Condition}
\label{sec:SHT}
Spaces of homogeneous type generalize the Euclidean setting to quasi-metric spaces 
with a doubling measure. This framework unifies diverse geometric 
structures, including Ahlfors-regular spaces, homogeneous groups, and compact 
Riemannian manifolds. For examples and basic properties, see the foundational works 
of Coifman and Weiss~\cite{CW71,CW77} and the monographs~\cite{AM15,GGKK98}.
\begin{definition}
A \emph{space of homogeneous type} is a triple $(\Omega,d,\mu)$ equipped with structural 
constants $\kappa\ge1$ and $C_\mu>1$ such that:
\begin{enumerate}
\item[\textnormal{(i)}] $d$ is a \emph{quasi-metric} on~$\Omega$ satisfying 
\[
d(x,z)\le\kappa\,(d(x,y) + d(y,z)) 
\quad\textnormal{for all }x,y,z\in\Omega;
\]
\item[\textnormal{(ii)}] $\mu$ is a Borel measure on~$\Omega$ such that balls
$B(x,r):=\{y\in\Omega\,:\,d(x,y)<r\}$ are $\mu$-measurable and satisfy the
\emph{doubling condition} 
\begin{equation}\label{cond:doubling}
0<\mu(B(x,2r)) \le C_\mu\, \mu(B(x,r))<\infty.
\end{equation}
\end{enumerate}
\end{definition}
The space~$(\Omega,d,\mu)$ is 
\emph{bounded if and only if $\mu(\Omega)<\infty$}~\cite[Lemma~1.9]{BC96}. 
We rely on this fundamental equivalence to separate the bounded and unbounded cases 
in our subsequent proofs. For any set~$E\subseteq\Omega$ with $\mu(E)<\infty$, we 
denote the average of~$f\in L^0(\Omega,\mu)$ by
\[
|f|_E:=\fint_E |f(y)|\,d\mu(y)
=\frac1{\mu(E)} \int_E |f(y)|\,d\mu(y).
\]

The \emph{Hardy--Littlewood maximal operator~$M$} is defined on~$L^0(\Omega,d,\mu)$ by
\[
Mf(x):=\sup_{B\ni x}|f|_B,
\quad x\in\Omega,
\]
where the supremum is taken over all balls $B\subseteq\Omega$ containing~$x$.
It is well known that~$M$ is bounded on~$L^p(\Omega,d,\mu)$ for all 
$1<p\le\infty$~\cite[Theorem~3.7 and Comment~3.8]{AM15}. In the variable setting, 
the following Muckenhoupt condition is \emph{necessary} for the boundedness of~$M$ 
on~$L^{p(\cdot)}(\Omega,d,\mu)$:
\begin{definition}
An exponent $p(\cdot)\in\mathcal{P}(\Omega,\mu)$ satisfies the 
\emph{Muckenhoupt condition}, denoted 
$p(\cdot)\in\mathcal{A}_\textnormal{loc}(\Omega,d,\mu)$, if 
\[
[p]_\mathcal{A}:=\sup_{B\subseteq\Omega} \frac1{\mu(B)} 
\|\mathbf{1}_B\|_{p(\cdot)} \|\mathbf{1}_B\|_{p'(\cdot)}<\infty,
\]
where the supremum is taken over all balls in $(\Omega,d,\mu)$.
\end{definition}
Note that $[p]_\mathcal{A}\ge\frac12$ by H\"older's inequality. Moreover, if 
$p(\cdot)$ satisfies the Muckenhoupt condition, then $p'(\cdot)$ also 
satisfies it with $[p']_\mathcal{A}=[p]_\mathcal{A}$.
\begin{lemma}\label{le:necess-Muck}
For any $p(\cdot)\in\mathcal{P}(\Omega,\mu)$, the boundedness of~$M$ 
on~$L^{p(\cdot)}(\Omega,d,\mu)$ implies
$p(\cdot)\in\mathcal{A}_\textnormal{loc}(\Omega,d,\mu)$.
\end{lemma}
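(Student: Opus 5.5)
The plan is the standard duality argument: test $M$ on functions of the form $g\mathbf{1}_B$ and use the norm conjugate formula, Theorem~\ref{th:toolbox}(e). Fix a ball $B\subseteq\Omega$ and let $\|M\|$ denote the operator norm of $M$ on $L^{p(\cdot)}(\Omega,d,\mu)$. Note that $0<\mu(B)<\infty$ by the doubling condition~\eqref{cond:doubling}. For any $g\in L^0(\Omega,\mu)$, every $x\in B$ lies in the ball $B$ itself, so
\[
M(g\mathbf{1}_B)(x)\ge |g\mathbf{1}_B|_B=\frac{1}{\mu(B)}\int_B|g|\,d\mu .
\]
This yields the pointwise bound
\[
\Big(\frac{1}{\mu(B)}\int_B|g|\,d\mu\Big)\mathbf{1}_B\le M(g\mathbf{1}_B).
\]

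We apply this with $g$ bounded, supported on a set of finite measure, and satisfying $\|g\|_{p(\cdot)}\le1$. Then $g\mathbf{1}_B\in L^{p(\cdot)}$ with $\|g\mathbf{1}_B\|_{p(\cdot)}\le1$, by monotonicity of the norm (a lattice property, since $L^{p(\cdot)}$ is a Banach function space). Taking norms in the pointwise bound gives
\[
\frac{1}{\mu(B)}\int_B|g|\,d\mu\;\|\mathbf{1}_B\|_{p(\cdot)}\le\|M\|.
\]
Next, apply Theorem~\ref{th:toolbox}(e) to the exponent $p'(\cdot)$ and the function $f=\mathbf{1}_B$. Since $(p')'=p$, this gives
\[
\|\mathbf{1}_B\|_{p'(\cdot)}\le 2\sup_{\|g\|_{p(\cdot)}\le1}\int_B|g|\,d\mu,
\]
and the supremum may be restricted to bounded $g$ supported on sets of finite measure. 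Multiplying through, we obtain
\[
\frac{1}{\mu(B)}\|\mathbf{1}_B\|_{p(\cdot)}\|\mathbf{1}_B\|_{p'(\cdot)}\le 2\|M\|.
\]
Taking the supremum over all balls gives $[p]_\mathcal{A}\le2\|M\|<\infty$.

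The only point needing care is that $\|\mathbf{1}_B\|_{p(\cdot)}$ and $\|\mathbf{1}_B\|_{p'(\cdot)}$ are finite and nonzero, so that the manipulations are legitimate. Both are nonzero because $\mu(B)>0$ and $\mathbf{1}_B\ne0$. Finiteness holds because $\varrho_{p(\cdot)}(\mathbf{1}_B/\lambda)\le\mu(B)\lambda^{-1}$ for $\lambda\ge1$, and the same bound holds for $p'(\cdot)$ even where the exponent is infinite, thanks to the convention $t^\infty=0$ for $t\le1$. Alternatively, the finiteness of $\|\mathbf{1}_B\|_{p'(\cdot)}$ is already a consequence of the displayed inequality. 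No step is a genuine obstacle.
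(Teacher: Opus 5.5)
Your proposal is correct and follows essentially the same route as the paper: the pointwise bound $|g|_B\mathbf{1}_B\le M g$ combined with the norm conjugate formula for $\|\mathbf{1}_B\|_{p'(\cdot)}$ gives $[p]_\mathcal{A}\le 2\|M\|$. Testing on $g\mathbf{1}_B$ instead of $g$, and restricting to bounded $g$, are harmless refinements the argument does not need.
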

\begin{proof}
Fix a ball~$B$. By the norm conjugate formula from Theorem~\ref{th:toolbox}(e),
\begin{align*}
\frac1{\mu(B)} \|\mathbf{1}_B\|_{p(\cdot)} \|\mathbf{1}_B\|_{p'(\cdot)}
&\le
2\, \|\mathbf{1}_B\|_{p(\cdot)}\, \sup_{\|g\|_{p(\cdot)}\le1}
\fint_B |g(x)|\,d\mu(x)
\\
&=2\,\sup_{\|g\|_{p(\cdot)}\le1} \|\,|g|_B\,\mathbf{1}_B\|_{p(\cdot)}
\\
&\le 2\,\|M\|_{L^{p(\cdot)}\to L^{p(\cdot)}},
\end{align*}
which is a finite, uniform bound for all balls~$B$. Thus $[p]_\mathcal{A}<\infty$.
\end{proof}

Further study of the maximal operator on $L^{p(\cdot)}(\Omega,d,\mu)$
is facilitated by the dyadic structure on spaces of homogeneous type. The following 
construction of dyadic grids is due to Hyt\"onen and Kairema~\cite[Theorem~2.2]{HK12}, 
stated here in the simplified form of~\cite[Theorem~2.1]{ACUM15}.
\begin{theorem}[Dyadic cubes]\label{th:HK}
Let $(\Omega,d,\mu)$ be a space of homogeneous type. There exist constants 
$C_\mathrm{D},R\ge1$ and  $0<\delta<1$, depending on the structural constants of the space; a family 
of sets 
\[
\mathscr{D}:=\bigcup_{k\in \mathbb{Z}}\mathscr{D}_{k},
\]
called \emph{a dyadic grid on $\Omega$}, whose elements are referred to as 
\emph{dyadic cubes}; and a collection of centers $\{x_Q\}_{Q\in\mathscr{D}}$ such that:
\begin{enumerate}
\item[\textnormal{(a)}] for each generation $k\in\mathbb{Z}$, the collection 
$\mathscr{D}_k$ is a partition of $\Omega$;

\item[\textnormal{(b)}] two dyadic cubes are either disjoint or one contains the other;

\item[\textnormal{(c)}] every cube $Q\in\mathscr{D}_k$ has a unique \emph{parent} 
in~$\mathscr{D}_{k-1}$ containing it, and at least one \emph{child} 
in~$\mathscr{D}_{k+1}$ contained within it;

\item[\textnormal{(d)}] if $Q'\in\mathscr{D}_{k+1}$ is a child of $Q\in\mathscr{D}_k$, 
then
\[
\mu(Q)\le C_\mathrm{D}\,\mu(Q'),
\]
where $C_\mathrm{D}$ is called the \emph{dyadic doubling constant};

\item[\textnormal{(e)}] for all $Q\in\mathscr{D}_k$,
\[
B(x_Q,\delta^{k})\subseteq Q\subseteq B(x_Q,R\,\delta^{k}).
\]
\end{enumerate}
\end{theorem}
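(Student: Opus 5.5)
The statement is the Hyt\"onen--Kairema construction, which the paper cites from \cite[Theorem~2.2]{HK12} in the form of \cite[Theorem~2.1]{ACUM15}. I would prove it by the standard nested-maximal-net construction.

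\emph{Step 1: the nets.} Fix a small $\delta\in(0,1)$, depending on $\kappa$. For each $k\in\mathbb{Z}$, choose a maximal $\delta^k$-separated set of points $\{z^k_\alpha\}_\alpha$. These sets must be nested, meaning each level-$k$ point is also a level-$(k+1)$ point. To arrange this, I would build the sets by Zorn's lemma (or greedily, since doubling makes each net countable). Going down in $k$, extract maximal separated subsets of the previous net; going up, extend the previous net to a maximal one.

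\emph{Step 2: the tree and the cubes.} Assign to each point $z^{k+1}_\beta$ a parent $z^k_\alpha$ with $d(z^k_\alpha,z^{k+1}_\beta)<\delta^k$. Choose the parent to be the point itself when $z^{k+1}_\beta$ already belongs to level $k$. This defines a tree, and I let $x_Q$ be the point labelling $Q$. Following Christ and Hyt\"onen--Kairema, I first form the preliminary open and closed cubes from the descendants of each point. I then fix a well-ordering and assign each remaining point to the first cube of generation $k$ that reaches it. This yields half-open cubes $Q^k_\alpha$.

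\emph{Step 3: the properties.} Properties (a)--(c) follow from the tree structure together with the tie-breaking rule. In particular, every cube has a child, namely the one labelled by its own center. For the outer inclusion in (e), sum the geometric series of descendant distances with the quasi-triangle inequality. This gives $d(x_Q,y)\le \kappa\delta^k\sum_j(\kappa\delta)^j\lesssim\delta^k$, provided $\kappa\delta<1$, which determines $R$.

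The inner ball inclusion in (e) is the main obstacle. We need every $y$ with $d(x_Q,y)<c\delta^k$ to be assigned to $Q$ and not to a neighbouring cube. I would follow the argument of \cite{HK12}. Separation shows that the other generation-$k$ centers are at distance at least $\delta^k$ from $x_Q$. Points assigned to them therefore lie in balls around those centers. Choosing the radius small relative to $\kappa^{-2}\delta^k$ then excludes $y$. Rescaling $\delta$ turns the constant $c$ into exactly $\delta^{k}$ at a shifted index.

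Property (d) then follows from (e) and the doubling property. A child $Q'\in\mathscr{D}_{k+1}$ of $Q$ contains $B(x_{Q'},\delta^{k+1})$. Also $Q\subseteq B(x_Q,R\delta^k)\subseteq B(x_{Q'},C\delta^{k})$. Iterating \eqref{cond:doubling} about $\log_2(C/\delta)$ times gives $\mu(Q)\le C_\mathrm{D}\mu(Q')$.
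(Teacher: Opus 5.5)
The paper gives no proof of this statement; it imports the Hyt\"onen--Kairema construction, and your outline follows that construction. Three pieces are fine: the outer inclusion via the geometric series, (d) from (e) plus doubling, and the normalisation of the inner radius. The last is really a re-indexing $\mathscr{D}'_k:=\mathscr{D}_{k+j}$ with a fixed $j\le 0$ such that $c\,\delta^{j}\ge 1$; $\delta$ itself need not change.

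\textbf{The main gap: the inner inclusion in (e).} Your parent rule lets a generation-$(k+1)$ point choose \emph{any} generation-$k$ point within $\delta^k$. This allows points arbitrarily close to $x_Q$ to be captured by a neighbour. For example, in $\mathbb{R}$ with $\delta=1/10$, take $0$ and $w_0:=1.05$ as generation-$0$ centres and $w_m:=1.05\cdot 10^{-m}$ as generation-$m$ points. The choice ``parent of $w_m$ is $w_{m-1}$'' is admissible under your rule, since $|w_m-w_{m-1}|<\delta^{m-1}$. All $w_m$ then descend from $w_0$ while $w_m\to 0$. If each $w_m$ lies in the cube it labels, these cubes sit inside the generation-$0$ cube of $w_0$. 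So the cube centred at $0$ contains no ball about $0$, and (e) fails.

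Your Step~3 cannot detect this. It uses only separation at generation $k$ and the fact that a competing cube lies in $B(z',R\delta^k)$. But your $R$ exceeds $1$, and $z'$ may sit barely farther than $\delta^k$ from $x_Q$, so that ball contains $x_Q$ itself and excludes nothing.

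Two ingredients are missing. First, the Hyt\"onen--Kairema parent rule: if a generation-$(k+1)$ point $w$ satisfies $d(w,z)<\delta^k/(2\kappa)$ for a generation-$k$ point $z$, then $z$ must be its parent; such a $z$ is unique by separation. Second, a multiscale argument:
\begin{enumerate}
\item[(i)] Suppose $d(y,x_Q)<c\,\delta^k$ and $y$ lies in the closure of the descendants of a competing centre $z'$. Pick such a descendant $v$ very close to $y$; necessarily $v$ belongs to a generation $\ge k+1$.
\item[(ii)] Its generation-$(k+1)$ ancestor $w$ satisfies $d(w,x_Q)\le \kappa\, d(w,v)+\kappa^2\,(d(v,y)+d(y,x_Q))<\delta^k/(2\kappa)$, once $\delta$ and $c$ are small in terms of $\kappa$.
\item[(iii)] By the rule, $w$ is a child of $x_Q$, contradicting that $v$ descends from $z'$.
\end{enumerate}
Together with $\Omega=\bigcup_\alpha \overline{Q^k_\alpha}$, this shows that $B(x_Q,c\,\delta^k)$ meets no competing closed cube. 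It therefore lies in $Q$, provided the tie-breaking is coherent.

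\textbf{A second, smaller gap: the tie-breaking.} You choose, in each generation separately, the first closed cube (in some well-ordering) containing a boundary point. This does not give (b). A point in $\overline{Q_{A_1}}\cap\overline{Q_{A_2}}$ at generation $k$ that also lies in $\overline{Q_{B_1}}\cap\overline{Q_{B_2}}$, for children $B_i$ of $A_i$, may be sent to $A_1$ and to $B_2$. Then the half-open cube of $B_2$ meets that of $A_2$ without being contained in it.

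The choice must be coherent across all $k\in\mathbb{Z}$. For each $x$, pick one chain $(\alpha_k(x))_{k\in\mathbb{Z}}$ with $x\in\overline{Q^k_{\alpha_k(x)}}$ and $\alpha_{k+1}(x)$ a child of $\alpha_k(x)$. Such a chain exists because, for nested nets, each closed cube is the finite union of its children's closed cubes. Then set $Q^k_\alpha:=\{x:\alpha_k(x)=\alpha\}$.

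A minor point: the sets you extract downward are maximal only inside the previous net. At negative generations they are therefore only $C\delta^k$-dense rather than $\delta^k$-dense. This is harmless but affects the constants.
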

Each dyadic grid~$\mathscr{D}$ induces a
\emph{dyadic maximal operator}~$M^\mathscr{D}$, 
defined for all~$f\in L^0(\Omega,d,\mu)$ by 
\[
M^\mathscr{D}f(x):=\sup_{\substack{Q\ni x,\\Q\in\mathscr{D}}} |f|_Q,
\quad x\in\Omega,
\]
where the supremum is taken over all cubes $Q\in\mathscr{D}$ containing~$x$.
Analogously, the boundedness of~$M^\mathscr{D}$ on~$L^{p(\cdot)}(\Omega,d,\mu)$
necessitates a \emph{dyadic} Muckenhoupt condition:
\begin{definition}
Given a dyadic grid $\mathscr{D}$ on $\Omega$, an exponent 
$p(\cdot)\in\mathcal{P}(\Omega,\mu)$ satisfies the 
\emph{dyadic Muckenhoupt condition}, denoted 
$p(\cdot)\in\mathcal{A}^\mathscr{D}_\textnormal{loc}(\Omega,d,\mu)$, if
\[
[p]_\mathcal{A}^\mathscr{D}:=\sup_{Q\in\mathscr{D}} \frac1{\mu(Q)}
\|\mathbf{1}_Q\|_{p(\cdot)}\|\mathbf{1}_Q\|_{p'(\cdot)}<\infty.
\]
\end{definition}
\begin{lemma}[Two Muckenhoupt conditions]\label{le:2-Mucken}
If $p(\cdot)\in\mathcal{A}_\textnormal{loc}(\Omega,d,\mu)$, then 
for every dyadic grid~$\mathscr{D}$ on~$\Omega$, one has
$p(\cdot)\in\mathcal{A}^\mathscr{D}_\textnormal{loc}(\Omega,d,\mu)$.
Specifically,
\[
[p]_\mathcal{A}^\mathscr{D}\le C_\Omega\,[p]_\mathcal{A},
\]
where $C_\Omega>1$ depends only on the structural constants of~$(\Omega,d,\mu)$.
\end{lemma}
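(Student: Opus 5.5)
The plan is to sandwich each dyadic cube between two balls of comparable measure and then use the monotonicity of the Luxemburg norm in the set. Fix a grid $\mathscr{D}$ and a cube $Q\in\mathscr{D}_k$. By Theorem~\ref{th:HK}(e),
\[
B(x_Q,\delta^k)\subseteq Q\subseteq B:=B(x_Q,R\delta^k).
\]
Since $\mathbf{1}_Q\le\mathbf{1}_B$ pointwise, the lattice property of the norm gives $\|\mathbf{1}_Q\|_{p(\cdot)}\le\|\mathbf{1}_B\|_{p(\cdot)}$ and $\|\mathbf{1}_Q\|_{p'(\cdot)}\le\|\mathbf{1}_B\|_{p'(\cdot)}$. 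Hence
\[
\frac1{\mu(Q)}\|\mathbf{1}_Q\|_{p(\cdot)}\|\mathbf{1}_Q\|_{p'(\cdot)}
\le\frac{\mu(B)}{\mu(Q)}\cdot\frac1{\mu(B)}\|\mathbf{1}_B\|_{p(\cdot)}\|\mathbf{1}_B\|_{p'(\cdot)}
\le\frac{\mu(B)}{\mu(Q)}\,[p]_\mathcal{A}.
\]

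It then remains to bound the ratio $\mu(B)/\mu(Q)$ by a constant independent of $Q$ and $k$. Because $Q\supseteq B(x_Q,\delta^k)$, we have
\[
\mu(Q)\ge\mu(B(x_Q,\delta^k)).
\]
Choose the integer $m\ge0$ with $2^{m}\ge R$, for instance $m:=\lceil\log_2 R\rceil$. Iterating the doubling condition~\eqref{cond:doubling} $m$ times at the same center gives
\[
\mu(B(x_Q,R\delta^k))\le\mu(B(x_Q,2^m\delta^k))\le C_\mu^{m}\,\mu(B(x_Q,\delta^k)).
\]
Therefore $\mu(B)/\mu(Q)\le C_\mu^{m}$, and we can take $C_\Omega:=C_\mu^{\lceil\log_2 R\rceil}$. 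If strict inequality $C_\Omega>1$ is wanted, we use $m+1$ iterations instead, since $C_\mu>1$. This constant depends only on $C_\mu$ and on $R$, and $R$ itself depends only on the structural constants by Theorem~\ref{th:HK}.

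I do not expect a genuine obstacle here. The only points requiring care are the following.
\begin{enumerate}
\item[(1)] Every doubling step is taken at the fixed center $x_Q$, so no quasi-metric constant $\kappa$ enters the argument.
\item[(2)] All balls involved have finite positive measure by~\eqref{cond:doubling}, so dividing by $\mu(Q)$ and $\mu(B)$ is legitimate.
\item[(3)] The sets $Q$ and $B$ are measurable, being dyadic cubes and balls respectively.
\end{enumerate}
Taking the supremum over $Q\in\mathscr{D}$ then yields $[p]_\mathcal{A}^\mathscr{D}\le C_\Omega[p]_\mathcal{A}$, uniformly over all grids.
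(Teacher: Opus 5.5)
Your proposal is correct and follows essentially the same route as the paper: sandwich $Q$ between $B(x_Q,\delta^k)$ and $B(x_Q,R\delta^k)$, bound the norms of $\mathbf{1}_Q$ by those of the outer ball, and control $\mu(B(x_Q,R\delta^k))/\mu(Q)$ by iterating doubling at the fixed center. You also state the explicit constant $C_\Omega=C_\mu^{\lceil\log_2 R\rceil}$ (or one extra factor $C_\mu$ to get $C_\Omega>1$), which the paper leaves implicit.
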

\begin{proof}
For $Q\in\mathscr{D}_k$, let $B_-:=B(x_Q,\delta^k)$ and $B_+:=B(x_Q,R\,\delta^k)$. 
By Theorem~\ref{th:HK}(e) and the doubling condition~\eqref{cond:doubling}, there
exists a constant $C_\Omega>1$, depending only on~$C_\mu$ and~$R$, such that 
$\mu(B_+)\le C_\Omega\,\mu(B_-)\le C_\Omega\,\mu(Q)$. Then 
\[
\frac1{\mu(Q)}\|\mathbf{1}_Q\|_{p(\cdot)}\|\mathbf{1}_Q\|_{p'(\cdot)}
\le\frac{C_\Omega}{\mu(B_+)}\|\mathbf{1}_{B_+}
\|_{p(\cdot)}\|\mathbf{1}_{B_+}\|_{p'(\cdot)}
\le C_\Omega\,[p]_\mathcal{A},
\]
which yields the desired inequality.
\end{proof}
The level sets of the dyadic maximal function~$M^\mathscr{D}f$ admit the following 
Calder\'on--Zygmund decomposition~\cite[Theorem~9 and Corollary~10]{AK19}.
\begin{lemma}[Calder\'on--Zygmund decomposition]\label{le:CZ}
Let $(\Omega,d,\mu)$ be a space of homogeneous type, $\mathscr{D}$ be a dyadic 
grid on~$\Omega$ with dyadic doubling constant $C_\mathrm{D}>1$, and 
$f\in L^1(\Omega,\mu)$. For any $a>2\,C_\mathrm{D}$ and 
$k\in\mathbb{Z}$ satisfying
\[
a^k>
\begin{cases}
0& \textnormal{if }\mu(\Omega)=\infty,
\\
|f|_\Omega & \textnormal{if }\mu(\Omega)<\infty,
\end{cases}
\]
there exist \emph{pairwise disjoint} maximal cubes $Q_j^k\in\mathscr{D}$ such that
$|f|_{Q_j^k}>a^k$ and
\[
\Omega_k:=\big\{x\in\Omega\,:\,M^{\mathscr{D}}f(x)>a^{k}\big\}
=\bigcup_{j}Q_j^k.
\]
Moreover, for all integers $\ell\ge0$ and all~$j$,
\begin{equation}\label{ineq:decay}
\frac{\mu(Q_j^k\cap \Omega_{k+\ell})}{\mu(Q_j^k)}
\le
C_\mathrm{D}\,a^{-\ell}.
\end{equation}
\end{lemma}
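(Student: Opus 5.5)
The decomposition comes from a stopping-time selection of maximal dyadic cubes. The decay estimate~\eqref{ineq:decay} then follows by comparing the mass of $f$ on the level-$(k+\ell)$ cubes inside $Q_j^k$ with the mass on $Q_j^k$ itself.

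\emph{Step 1: selecting maximal cubes.} Fix $k$ with $a^k$ as in the hypothesis, and let $\mathcal{C}_k$ be the family of $Q\in\mathscr{D}$ with $|f|_Q>a^k$. Clearly $\Omega_k=\bigcup_{Q\in\mathcal{C}_k}Q$. We need every $Q\in\mathcal{C}_k$ to lie in a maximal element of $\mathcal{C}_k$, i.e.\ every ascending chain $Q\subsetneq Q^{(1)}\subsetneq\cdots$ in $\mathcal{C}_k$ must be finite.
\begin{itemize}
\item If $\mu(\Omega)=\infty$, the ancestors satisfy $\mu(Q^{(m)})\to\infty$. This holds because the ancestors exhaust $\Omega$ by Theorem~\ref{th:HK}(e), since their radii $R\delta^{k-m}\to\infty$. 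Hence $|f|_{Q^{(m)}}\le\|f\|_1/\mu(Q^{(m)})\to0<a^k$, so only finitely many ancestors lie in $\mathcal{C}_k$.
\item If $\mu(\Omega)<\infty$, the space is bounded. For $m$ large the ball $B(x_{Q^{(m)}},\delta^{k-m})$ contains $\Omega$, so $Q^{(m)}=\Omega$ and $|f|_{Q^{(m)}}=|f|_\Omega<a^k$.
\end{itemize}
In both cases maximal cubes exist. By Theorem~\ref{th:HK}(b) the maximal cubes $\{Q_j^k\}$ are pairwise disjoint and cover $\Omega_k$, and $|f|_{Q_j^k}>a^k$ by construction.

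\emph{Step 2: the parent bound.} Maximality of $Q_j^k$ means its parent $\widehat{Q}$ satisfies $|f|_{\widehat Q}\le a^k$. By Theorem~\ref{th:HK}(d),
\[
a^k<|f|_{Q_j^k}\le \frac{\mu(\widehat Q)}{\mu(Q_j^k)}\,|f|_{\widehat Q}\le C_\mathrm{D}\,a^k.
\]

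\emph{Step 3: the decay estimate.} Fix $\ell\ge0$. Every cube $Q_i^{k+\ell}$ is contained in some cube of level $k$, because $a^{k+\ell}\ge a^k$ gives $\Omega_{k+\ell}\subseteq\Omega_k$. By nestedness and disjointness, $Q_j^k\cap\Omega_{k+\ell}$ is the disjoint union of those $Q_i^{k+\ell}$ with $Q_i^{k+\ell}\subseteq Q_j^k$. Therefore
\[
\mu(Q_j^k\cap\Omega_{k+\ell})=\sum_{Q_i^{k+\ell}\subseteq Q_j^k}\mu(Q_i^{k+\ell})
<a^{-(k+\ell)}\sum_{Q_i^{k+\ell}\subseteq Q_j^k}\int_{Q_i^{k+\ell}}|f|\,d\mu
\le a^{-k-\ell}\int_{Q_j^k}|f|\,d\mu.
\]
The upper bound from Step 2 gives $\int_{Q_j^k}|f|\,d\mu\le C_\mathrm{D}\,a^k\,\mu(Q_j^k)$. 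Substituting yields~\eqref{ineq:decay}.

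The assumption $a>2C_\mathrm{D}$ is not actually used in this argument. It is presumably retained for later applications, such as summing the decay.

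The main obstacle is Step~1, namely the existence of maximal cubes. This requires justifying that ancestors exhaust $\Omega$. In the unbounded case I would argue via property~(e) together with the quasi-triangle inequality: since $\delta^{k-m}\to\infty$, the ball $B(x_{Q^{(m)}},\delta^{k-m})$ eventually contains any fixed ball around a point of $Q$. I would then invoke doubling and $\mu(\Omega)=\infty$ to conclude $\mu(Q^{(m)})\to\infty$.
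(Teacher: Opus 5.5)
Your overall route is the standard proof of this lemma: a stopping-time selection of maximal cubes, the parent bound $|f|_{Q_j^k}\le C_\mathrm{D}a^k$, and summation over the level-$(k+\ell)$ cubes inside $Q_j^k$. The paper does not prove the lemma; it quotes it from \cite{AK19}. Steps~2 and~3, and the bounded case of Step~1, are correct. You are also right that $a>2C_\mathrm{D}$ plays no role here.

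The flaw is in the unbounded case of Step~1: both of your justifications for $\mu(Q^{(m)})\to\infty$ are false. The ancestors of a dyadic cube need not exhaust $\Omega$. Also, the inner ball $B(x_{Q^{(m)}},\delta^{n-m})$ (where $Q\in\mathscr{D}_n$) need not eventually contain a fixed ball around a point $x\in Q$. Take the standard dyadic grid on $\mathbb{R}$, which satisfies Theorem~\ref{th:HK} once its generations are suitably indexed. The ancestors of $[0,1)$ are $[0,2^m)$. They never meet $(-\infty,0)$, and no ball contained in $[0,2^m)$ contains $0$. The outer radii $R\,\delta^{n-m}\to\infty$ say nothing about what $Q^{(m)}$ contains. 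The quasi-triangle inequality cannot force containment either: property~(e) only places $x_{Q^{(m)}}$ within distance $R\,\delta^{n-m}$ of $x$, which exceeds the inner radius $\delta^{n-m}$.

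The conclusion $\mu(Q^{(m)})\to\infty$ is still true, but doubling must be used to compare balls, not to contain one in another. Set $r_m:=\delta^{n-m}$ and take $x\in Q\subseteq Q^{(m)}$. Property~(e) and the quasi-triangle inequality give $B(x,r_m)\subseteq B(x_{Q^{(m)}},\kappa(R+1)r_m)$. So for $2^N\ge\kappa(R+1)$,
\[
\mu(B(x,r_m))\le C_\mu^N\,\mu(B(x_{Q^{(m)}},r_m))\le C_\mu^N\,\mu(Q^{(m)}),
\]
while $\mu(B(x,r_m))\to\mu(\Omega)=\infty$ by continuity from below.

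A smaller point in Step~2: a Hyt\"onen--Kairema cube may coincide, as a set, with its parent in the next coarser generation. So $\widehat Q$ must be the smallest ancestor strictly larger than $Q_j^k$, i.e.\ the parent of $Q_j^k$ at the coarsest generation in which this set occurs. It exists by the measure growth just proved in the unbounded case, and because $Q_j^k\ne\Omega$ in the bounded case. Property~(d) then still gives $\mu(\widehat Q)\le C_\mathrm{D}\,\mu(Q_j^k)$, as you use.
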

The Lebesgue differentiation theorem holds in a space of homogeneous 
type~$(\Omega,d,\mu)$ if and only if the measure~$\mu$ is 
\emph{Borel-semiregular}~\cite[Theorem~3.14]{AM15}, i.e.,
if every finite-measure set $E\subseteq\Omega$ differs from a Borel set~$B$ by a null 
set, meaning $\mu(E\triangle B)=0$. There is also a dyadic version of 
this theorem~\cite[Lemma~3.6(c)]{AS26}:
\begin{theorem}[Dyadic differentiation]\label{th:LDT}
Let~$(\Omega,d,\mu)$ be a space of homogeneous type with a Borel-semiregular 
measure~$\mu$, and let~$\mathscr{D}$ be a dyadic grid on~$\Omega$. For 
each~$x\in\Omega$, let~$Q_\ell(x)$ denote the unique cube in~$\mathscr{D}_\ell$
containing~$x$. If~$f\in L^1_\textnormal{loc}(\Omega,d,\mu)$, then
\[
\lim_{\ell\to+\infty}\fint_{Q_\ell(x)} f(y)\,d\mu(y)=f(x)
\quad\text{for $\mu$-almost every }x\in\Omega.
\]
\end{theorem}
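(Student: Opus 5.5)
The plan is to reduce dyadic differentiation to the ball version of the Lebesgue differentiation theorem, which holds because $\mu$ is Borel-semiregular \cite[Theorem~3.14]{AM15}. The geometric input is Theorem~\ref{th:HK}(e) combined with the doubling condition~\eqref{cond:doubling}. Together they show that the dyadic cube $Q_\ell(x)$ is comparable, both in position and in measure, to a ball centred at $x$ whose radius is a fixed multiple of $\delta^\ell$.

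\emph{Step 1 (Lebesgue points).} I would first upgrade the ball differentiation theorem to its Lebesgue-point form: for $\mu$-a.e.\ $x$,
\[
\lim_{r\to0}\fint_{B(x,r)}|f(y)-f(x)|\,d\mu(y)=0 .
\]
Fix a reference point $x_0$ and $N\in\mathbb{N}$. Apply the differentiation theorem to the countably many functions $|f-q|\mathbf{1}_{B(x_0,N)}$ with $q$ rational (complex rational if $f$ is complex); these lie in $L^1$ since $f\in L^1_{\mathrm{loc}}$. Discard the resulting countable union of null sets. Then approximate $f(x)$ by rationals $q$ in the usual way, using $|f-f(x)|\le|f-q|+|q-f(x)|$. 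For $x\in B(x_0,N/(2\kappa))$ and small $r$, the ball $B(x,r)$ lies inside $B(x_0,N)$, so the truncation is harmless. Letting $N\to\infty$ gives the claim a.e.\ on $\Omega$.

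\emph{Step 2 (comparison of cubes and balls).} Fix $x$ and $\ell$, and write $Q=Q_\ell(x)$. Since $x\in Q\subseteq B(x_Q,R\delta^\ell)$, the quasi-triangle inequality gives
\[
Q\subseteq B_\ell:=B(x,2\kappa R\delta^\ell).
\]
In the other direction, $B_\ell\subseteq B(x_Q,c\,\delta^\ell)$ with $c:=\kappa(2\kappa R+R)$. Iterating~\eqref{cond:doubling} a bounded number of times then yields
\[
\mu(B_\ell)\le C\,\mu(B(x_Q,\delta^\ell))\le C\,\mu(Q),
\]
where $C$ depends only on $\kappa$, $C_\mu$, $R$ (and not on $\ell$ or $x$).

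\emph{Step 3 (conclusion).} Combining the two steps,
\[
\Big|\fint_{Q_\ell(x)}f\,d\mu-f(x)\Big|\le\fint_{Q_\ell(x)}|f-f(x)|\,d\mu
\le C\fint_{B_\ell}|f-f(x)|\,d\mu .
\]
At every Lebesgue point $x$ the right-hand side tends to $0$ as $\ell\to+\infty$, because $\delta^\ell\to0$.

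The main obstacle is Step 1, namely passing from the averages statement in \cite{AM15} to the Lebesgue-point statement for merely locally integrable $f$. The delicate points are the localisation in the quasi-metric setting and the countable exceptional set. An alternative route would be to prove a weak-type $(1,1)$ bound for $M^{\mathscr{D}}$ via the disjoint maximal cubes of Lemma~\ref{le:CZ}, and then use density of nice functions. However, density of continuous functions is less transparent in a general quasi-metric space, so I would use the reduction to balls described above.
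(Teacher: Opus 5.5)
Your proposal is correct. With $Q=Q_\ell(x)$, the inclusions $B(x_Q,\delta^\ell)\subseteq Q\subseteq B(x,2\kappa R\delta^\ell)\subseteq B(x_Q,\kappa R(1+2\kappa)\delta^\ell)$ and doubling give $\fint_{Q}|f-f(x)|\,d\mu\le C\fint_{B(x,2\kappa R\delta^\ell)}|f-f(x)|\,d\mu$. Your countable-rational upgrade of \cite[Theorem~3.14]{AM15} to Lebesgue points is routine and sound, so Step~1 is not a real obstacle. The paper does not prove this statement itself; it only cites \cite[Lemma~3.6(c)]{AS26}, and your reduction to the ball differentiation theorem is the standard argument behind such a citation.
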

The operators~$M$ and~$M^\mathscr{D}$ are linked via a finite system of 
\emph{adjacent} dyadic grids, constructed by 
Hyt\"onen and Kairema~\cite[Theorem~4.1]{HK12}. Serving as an analogue to the 
$3^n$~shifted grids in the Euclidean setting,
\[
\mathscr{D}^\mathbf{t}:=\left\{ 
2^{-k}\big([0,1)^n+\mathbf{m}+(-1)^k\mathbf{t}\big)\ :\ 
k\in\mathbb{Z},\, \mathbf{m}\in\mathbb{Z}^n\right\},
\quad\mathbf{t}\in\big\{0,\tfrac13,\tfrac23\big\}^n,
\]
this geometric construction provides two-sided pointwise bounds relating the standard 
maximal function to its dyadic counterparts~\cite[Proposition~7.9]{HK12}.
\begin{theorem}[Adjacent dyadic grids]\label{th:adjacent-grids}
Let~$(\Omega,d,\mu)$ be a space of homogeneous type. There exists a 
\emph{finite} collection of dyadic grids $\mathscr{D}^1,\ldots,\mathscr{D}^K$ 
on~$\Omega$---satisfying Theorem~\ref{th:HK} with the same parameters~$C_\mathrm{D}$, 
$R$, and~$\delta$---such that for all $f\in L^{1}_{\textnormal{loc}}(\Omega,d,\mu)$ 
and~$x\in\Omega$,
\[
Mf(x)\le C\sum_{t=1}^{K}M^{\mathscr{D}^{t}}f(x)
\quad\text{and}\quad
M^{\mathscr{D}^t}f(x)\le C\,Mf(x),
\]
where $K$ and $C\ge1$ depend only on the structural constants of~$(\Omega,d,\mu)$.
\end{theorem}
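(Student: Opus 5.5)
The plan is to derive both pointwise inequalities from one geometric property of the Hyt\"onen--Kairema adjacent system, combined with the doubling condition~\eqref{cond:doubling}. I will take that property directly from~\cite[Theorem~4.1]{HK12}: there are finitely many grids $\mathscr{D}^1,\ldots,\mathscr{D}^K$, each satisfying Theorem~\ref{th:HK} with common parameters $C_\mathrm{D}$, $R$, $\delta$, and a constant $C_0\ge1$, such that the following holds. For every ball $B=B(x,r)$ there exist $t\in\{1,\ldots,K\}$ and $Q\in\mathscr{D}^t$ with
\[
B\subseteq Q\subseteq B(x,C_0 r).
\]
This covering property is the real content of the theorem and the main obstacle. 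Its proof requires building the $K$ grids from nested maximal $\delta^k$-nets whose labelings are shifted so that, at each scale, every point lies deep inside a cube of some grid. I would not reprove it but invoke~\cite[Theorem~4.1]{HK12}. The rest is routine.

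\emph{First inequality.} Fix $x\in\Omega$ and a ball $B=B(y,r)\ni x$. Choose $t$ and $Q\in\mathscr{D}^t$ as above, so $x\in B\subseteq Q\subseteq B(y,C_0r)$. Iterating~\eqref{cond:doubling} gives $\mu(B(y,C_0r))\le C_\mu^{m}\mu(B)$ with $m=\lceil\log_2 C_0\rceil$. Hence
\[
|f|_B\le\frac{\mu(Q)}{\mu(B)}\,|f|_Q\le C_\mu^{m}\,M^{\mathscr{D}^t}f(x)\le C_\mu^{m}\sum_{s=1}^K M^{\mathscr{D}^s}f(x).
\]
Taking the supremum over all balls $B\ni x$ yields $Mf(x)\le C\sum_t M^{\mathscr{D}^t}f(x)$.

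\emph{Second inequality.} Fix $t$, $x\in\Omega$, and a cube $Q\in\mathscr{D}^t_k$ containing $x$. By Theorem~\ref{th:HK}(e), $B_-:=B(x_Q,\delta^k)\subseteq Q\subseteq B_+:=B(x_Q,R\delta^k)$. As in the proof of Lemma~\ref{le:2-Mucken}, doubling gives $\mu(B_+)\le C_\Omega\,\mu(B_-)\le C_\Omega\,\mu(Q)$. Therefore
\[
|f|_Q\le\frac{\mu(B_+)}{\mu(Q)}\,|f|_{B_+}\le C_\Omega\,Mf(x),
\]
because $B_+$ is a ball containing $x$. Taking the supremum over $Q\ni x$ gives $M^{\mathscr{D}^t}f\le C_\Omega Mf$.

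Setting $C:=\max\{C_\mu^m,C_\Omega\}$ proves both inequalities. This constant depends only on $\kappa$, $C_\mu$ and the construction parameters, hence only on the structural constants of the space. No regularity of $f$ beyond local integrability is needed, since all averages are over sets of finite measure.
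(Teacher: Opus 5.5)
Your proposal is correct and follows essentially the route the paper relies on: the paper gives no proof of its own, only citing the construction in \cite[Theorem~4.1]{HK12} and the resulting bounds in \cite[Proposition~7.9]{HK12}, and your derivation of both inequalities from the ball-in-cube covering property, doubling, and Theorem~\ref{th:HK}(e) is the standard way the latter follows from the former. If you quote the covering property only in the form $B(x,r)\subseteq Q$ with $Q$ of generation comparable to $r$, the outer inclusion $Q\subseteq B(x,C_0r)$ follows from Theorem~\ref{th:HK}(e) and one use of the quasi-triangle inequality, e.g.\ with $C_0=2\kappa R\delta^{-2}$ when $Q\in\mathscr{D}_{k-1}$ and $\delta^{k+1}<r\le\delta^{k}$.
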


\section{Necessity of $p_->1$: An Atomic Classification}
\label{sec:p-=1}
We begin by addressing the lower endpoint~$p_-=1$ for the maximal operator~$M$ on 
variable Lebesgue spaces~$L^{p(\cdot)}(\Omega,d,\mu)$. In joint work with 
Karlovych~\cite[Theorem~1]{KS25}, we proved that if the measure~$\mu$ is 
Borel-semiregular and \emph{reverse doubling}, satisfying the bound
\[
\mu(B(x,r))\le c_\mu\,\mu(B(x,2r))
\]
for all balls~$B(x,r)$ with an absolute constant~$c_\mu\in(0,1)$, then 
the boundedness of~$M$ on~$L^{p(\cdot)}(\Omega,d,\mu)$ necessarily 
implies~$p_->1$. This initial result, however, left open 
whether the reverse doubling assumption could be relaxed.

Reverse doubling spaces cannot contain atoms~\cite[Lemma~2.8]{CUS18},
that is, points~$x\in\Omega$ with~$\mu(\{x\})>0$.
In the following theorem, we prove that the reverse doubling hypothesis can indeed be 
replaced by the weaker condition of~\emph{non-atomicity}. We achieve this by refining 
the counterexample from~\cite{KS25} to use dyadic rather than metric annuli.
\begin{theorem}[No atoms]\label{th:KS}
Let $(\Omega,d,\mu)$ be a \emph{non-atomic} space of homogeneous type with a 
Borel-semiregular measure~$\mu$. Given an exponent $p(\cdot)\in\mathcal{P}(\Omega,\mu)$,
if~$M$ is bounded on~$L^{p(\cdot)}(\Omega,d,\mu)$, then $p_->1$.
\end{theorem}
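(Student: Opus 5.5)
We argue by contraposition: assuming $p_-=1$ and that $M$ is bounded on $L^{p(\cdot)}(\Omega,d,\mu)$, we derive a contradiction. Since $M^\mathscr{D}f\le C\,Mf$ by Theorem~\ref{th:adjacent-grids}, $M^\mathscr{D}$ is then also bounded for a fixed Hyt\"onen--Kairema grid $\mathscr{D}$. We take $\varepsilon_n\downarrow0$ and sets $E_n:=\{x: p(x)<1+\varepsilon_n\}$ of positive measure; we may shrink each to finite positive measure. The aim is to produce, for every $n$, a function $f_n$ with $\|f_n\|_{p(\cdot)}\le1$ and $\|M^\mathscr{D}f_n\|_{p(\cdot)}\ge c\log(1/\varepsilon_n)$, or some other quantity blowing up. This mirrors the Euclidean fact that $M$ fails on $L^1$ because $M\mathbf{1}_Q$ decays only like $\mu(Q)/\mu(\text{ancestor})$ across dyadic annuli.

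The construction proceeds as follows. By the dyadic differentiation Theorem~\ref{th:LDT} applied to $\mathbf{1}_{E_n}$, there is a point $x_0\in E_n$ and a deep cube $Q_0=Q_\ell(x_0)$ with $\mu(E_n\cap Q_0)\ge\frac12\mu(Q_0)$. Non-atomicity enters here. Along the decreasing chain $Q_0\supset Q_1\supset\cdots$ of cubes containing $x_0$, we have $\mu(Q_j)\to\mu(\{x_0\})=0$, using Borel-semiregularity and $\bigcap_j Q_j\subseteq\{x_0\}$ up to null sets. Since children carry at least a $1/C_\mathrm{D}$ fraction of the parent, we can extract a subsequence of \emph{dyadic annuli} $A_j:=Q_{k_j}\setminus Q_{k_{j+1}}$ whose measures decrease geometrically, say $\mu(Q_{k_{j+1}})\le\frac12\mu(Q_{k_j})$, while $\mu(Q_{k_j})\le C\,\mu(Q_{k_{j+1}})$. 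These dyadic annuli replace the metric annuli that reverse doubling supplied in~\cite{KS25}. Working inside the shrinking cubes and using differentiation again, we arrange that each annulus has a large portion lying in $E_n$. The test function is $f=\mathbf{1}_{F}/\mu(F)^{1/(1+\varepsilon_n)}$ with $F\subseteq E_n\cap Q_{k_N}$ of small measure, which has norm $\lesssim1$ because $p\approx1$ on $F$. Then $M^\mathscr{D}f\ge \mu(F)^{1-1/(1+\varepsilon)}/\mu(Q_{k_j})$ on $A_j\cap E_n$ for $j<N$.

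Next we estimate the norm from below. On $A_j\cap E_n$ we have $p\le1+\varepsilon_n$, so the modular is at least $\sum_{j<N}\big(\mu(F)^{\varepsilon/(1+\varepsilon)}/\mu(Q_{k_j})\big)^{1+\varepsilon}\mu(A_j\cap E_n)\gtrsim \sum_{j<N}\mu(F)^{\varepsilon}\mu(Q_{k_j})^{-\varepsilon}$. We choose $N$ of order $1/\varepsilon_n$ and $\mu(F)$ comparable to $\mu(Q_{k_N})$, so that $\mu(F)/\mu(Q_{k_j})\ge 2^{-(N-j)}C^{-1}$ and each term is $\gtrsim 2^{-\varepsilon N}\approx$ const. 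The modular is then $\gtrsim N\to\infty$. Since the values $M^\mathscr{D}f$ there are $\ge1$-ish times the $p\approx1$ factor, the modular blow-up converts to norm blow-up via Theorem~\ref{th:toolbox}(a),(c), contradicting boundedness.

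We expect the main obstacle to be ensuring that the annuli $A_j$ meet $E_n$ in a proportional fraction of their measure. Differentiation at a single point only gives density of $E_n$ in small cubes, not in the annuli $Q_{k_j}\setminus Q_{k_{j+1}}$. The first attempt is to choose $x_0$ a density point and $k_0$ large enough that $\mu(Q\setminus E_n)\le\eta\,\mu(Q)$ for all $Q=Q_k(x_0)$ with $k\ge k_0$. With $\eta$ much smaller than the annulus proportion $1/2$, this yields $\mu(A_j\cap E_n)\ge(\frac12-\eta)\mu(Q_{k_j})$, which suffices.
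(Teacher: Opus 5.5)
The first half of your argument is sound and matches the paper: the density point, $\mu(Q_j)\to 0$ by non-atomicity, the extraction of annuli with geometric decay, and $\mu(A_j\cap E_n)\ge(\frac12-\eta)\,\mu(Q_{k_j})$. The gap is in the modular lower bound for $M^{\mathscr D}f$.

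You pass from $p\le 1+\varepsilon_n$ on $A_j\cap E_n$ to
\[
\int_{A_j\cap E_n}(M^{\mathscr D}f)^{p(x)}\,d\mu\ge t_j^{1+\varepsilon_n}\,\mu(A_j\cap E_n),
\qquad t_j:=\frac{\mu(F)^{\varepsilon_n/(1+\varepsilon_n)}}{\mu(Q_{k_j})},
\]
but $t^{p(x)}\ge t^{1+\varepsilon_n}$ holds only for $t\le 1$. With your choices ($N\varepsilon_n\lesssim1$, $\mu(F)\approx\mu(Q_{k_N})$) you have $t_j\gtrsim \mu(Q_{k_j})^{-1/(1+\varepsilon_n)}\gg1$ for deep cubes. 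For $t>1$, the information $1\le p<1+\varepsilon_n$ gives only $t^{p(x)}\ge t$. Your upper bound $\varrho_{p(\cdot)}(f)\le 1$ already used the opposite extreme ($f\ge1$ and $p\le 1+\varepsilon_n$ on $F$), so you cannot use the exponent $1+\varepsilon_n$ in both estimates.

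With the correct bound, and since $\mu(F)\le\mu(Q_{k_N})\le 2^{-N}\mu(Q_{k_0})$, all you obtain is
\[
\varrho_{p(\cdot)}(M^{\mathscr D}f)\gtrsim N\,\mu(F)^{\frac{\varepsilon_n}{1+\varepsilon_n}}
\le N\,2^{-\frac{N\varepsilon_n}{2}}\,\mu(Q_{k_0})^{\frac{\varepsilon_n}{2}}
\lesssim \varepsilon_n^{-1}\,\mu(Q_{k_0})^{\frac{\varepsilon_n}{2}},
\]
whatever $N$ and $F$ you pick. The scale $\mu(Q_{k_0})$ is dictated by the density of $E_n$, not by $\varepsilon_n$. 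For $p(x)=1+(\log(1/|x|))^{-1/2}$ near $0$ in $\mathbb{R}$ one has $\mu(E_n)=2e^{-1/\varepsilon_n^2}$. Every cube in which $E_n$ has density at least $\frac12$ then has measure at most $4e^{-1/\varepsilon_n^2}$, so the right-hand side tends to $0$.

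This is the inhomogeneity of the variable modular. A normalized bump lives at height about $\mu(F)^{-1}$. There, knowing only $1\le p<1+\varepsilon_n$ fixes norms only up to factors $\mu(F)^{\pm\varepsilon_n}$, which the gain $N\approx 1/\varepsilon_n$ cannot absorb. A minor point: the ratio bound should read $\mu(F)/\mu(Q_{k_j})\gtrsim C^{-(N-j)}$ rather than $2^{-(N-j)}C^{-1}$, and Theorem~\ref{th:toolbox}(c) gives upper, not lower, norm bounds.

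The missing idea is to avoid comparing modulars at different heights altogether. The paper spreads the test function over the whole infinite chain of annuli with slowly decaying weights,
\[
f_k:=\sum_{i\ge0}\frac{\mathbf{1}_{E_k\cap A_i}}{C_\mathrm{D}^{i/k}\,\mu(A_i)}.
\]
It then proves the pointwise inequality $M^{\mathscr D}f_k\ge c_k f_k$ with $c_k=[2C_\mathrm{D}^2(1-C_\mathrm{D}^{-1/k})]^{-1}\to\infty$. The condition $p<1+\frac1k$ is used only to check $\varrho_{p(\cdot)}(f_k)<\infty$, so that $0<\|f_k\|_{p(\cdot)}<\infty$. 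The lattice property then gives $\|M^{\mathscr D}f_k\|_{p(\cdot)}\ge c_k\|f_k\|_{p(\cdot)}$ with no normalization, so the scale of $Q_0$ never enters.
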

\begin{proof}
Assume to the contrary that~$p_-=1$.
Let~$\mathscr{D}=\bigcup_{\ell\in\mathbb{Z}}\mathscr{D}_\ell$ be any of the 
adjacent dyadic grids from Theorem~\ref{th:adjacent-grids}, so that the dyadic maximal 
operator~$M^\mathscr{D}$ is pointwise controlled by~$M$. It suffices to prove 
that~$M^\mathscr{D}$ is unbounded on~$L^{p(\cdot)}(\Omega,d,\mu)$.

Since $p_-=1$, for each~$k\in\mathbb{N}$ the set
\[
E_k:=\big\{x\in\Omega\,:\,p(x)<1+\tfrac1k\big\}
\]
has positive measure. Applying dyadic differentiation from
Theorem~\ref{th:LDT} to the function~$\mathbf{1}_{E_k}$, we can choose a
density point~$x_k\in E_k$ such that
\begin{equation}\label{lim:ratio}
\lim_{\ell\to+\infty}\frac{\mu(E_k\cap Q_\ell(x_k))}{\mu(Q_\ell(x_k))}=1,
\end{equation}
where $Q_\ell(x_k)$ is the unique cube in~$\mathscr{D}_\ell$ containing~$x_k$.
Let $C_\mathrm{D}$ be the dyadic doubling constant of~$\mathscr{D}$. Consider
only those generations $\ell\ge\ell_0$ for which the ratio in~\eqref{lim:ratio}
exceeds the value~$1-\frac{1}{2C_\mathrm{D}}$. Since the space is non-atomic, 
we have $\mu(Q_\ell(x_k))\to0$ as $\ell\to+\infty$.  
Thus the dyadic cone $\{Q_\ell(x_k)\}_{\ell\ge\ell_0}$ has infinitely many 
distinct cubes.

Enumerate the distinct cubes in the cone as
\[
Q_0\supsetneq Q_1\supsetneq Q_2\supsetneq\ldots.
\]
After omitting repeated copies of the same cube, $Q_{i+1}$ is 
a genuine child of $Q_i$ at the generation where the strict division occurs.
Consequently, $Q_i$ contains at least one other child~$Q'_{i+1}$. The dyadic doubling
condition $\mu(Q_i)\le C_\mathrm{D}\,\mu(Q'_{i+1})$ immediately forces a 
reverse doubling property:
\begin{equation}\label{est:RD}
\mu(Q_{i+1})\le\mu(Q_i)-\mu(Q'_{i+1})
\le \Big(1-\frac{1}{C_\mathrm{D}}\Big)\,\mu(Q_i).
\end{equation}

Define the dyadic annuli $A_i:=Q_i\setminus Q_{i+1}$ for all~$i\ge0$. By dyadic 
doubling, their measures are bounded from below:
\begin{equation}\label{est:annuli}
\mu(A_i)\ge\mu(Q'_{i+1})\ge\frac{\mu(Q_i)}{C_\mathrm{D}}
\ge\frac{\mu(Q_0)}{C_\mathrm{D}^{i+1}}.
\end{equation}
By our density choice, we have 
$\frac{\mu(E_k\cap Q_i)}{\mu(Q_i)}>1-\frac{1}{2C_\mathrm{D}}$. Combined with the 
reverse doubling estimate~\eqref{est:RD}, this yields the mass bound
\begin{equation}\label{est:port-annuli}
\frac{\mu(E_k\cap A_i)}{\mu(A_i)}
\ge\frac{\mu(E_k\cap Q_i)}{\mu(Q_i)}-\frac{\mu(Q_{i+1})}{\mu(Q_i)}
>\frac{1}{2C_\mathrm{D}}.
\end{equation}

Now, define the test functions
\[
f_k(x):=\sum_{i=0}^\infty\frac{\mathbf{1}_{E_k\cap A_i}(x)}
{C_\mathrm{D}^{\frac{i}{k}}\,\mu(A_i)},
\quad x\in\Omega.
\]
Note that $f_k$ is supported in~$E_k\cap Q_0$, and we have
$f_k(x)^{p(x)}\le1+f_k(x)^{1+\frac1k}$ for all $x\in E_k$.
Combined with the lower bound~\eqref{est:annuli}, this gives
\begin{align*}
\varrho_{p(\cdot)}(f_k)&=
\int_{E_k\cap Q_0} f_k(x)^{p(x)}\,d\mu(x)
\\
&\le\mu(Q_0)+\int_{E_k\cap Q_0}f_k(x)^{1+\frac1k}\,d\mu(x)
\\
&=\mu(Q_0)+\sum_{i=0}^\infty\frac{\mu(E_k\cap A_i)}
{\big[C_\mathrm{D}^\frac{i}{k}\,\mu(A_i)\big]^{1+\frac1k}}
\\
&\le\mu(Q_0)+\sum_{i=0}^\infty\frac{\mu(A_i)^{-\frac1k}}
{C_\mathrm{D}^{\frac{i}{k}+\frac{i}{k^2}}}
\\
&\le\mu(Q_0)+\Big[\frac{C_\mathrm{D}}{\mu(Q_0)}\Big]^{\frac1k}\,
\sum_{i=0}^\infty C_\mathrm{D}^{-\frac{i}{k^2}}
<\infty.
\end{align*}
Thus, $f_k\in L^{p(\cdot)}(\Omega,d,\mu)$ for all~$k$. Next, take~$x\in E_k\cap A_i$
for some~$i\ge0$. Since $x\in Q_i$, we use the mass bound~\eqref{est:port-annuli} to 
estimate the value of the dyadic maximal function at~$x$ by
\begin{align*}
M^\mathscr{D}f_k(x)&\ge\fint_{Q_i} f_k(y)\,d\mu(y)
\\
&=\frac{1}{\mu(Q_i)}\sum_{j=i}^\infty\frac{\mu(E_k\cap A_j)}
{C_\mathrm{D}^\frac{j}{k}\,\mu(A_j)}
\\
&\ge\frac{1}{\mu(Q_i)}\frac{1}{2C_\mathrm{D}}
\sum_{j=i}^\infty C_\mathrm{D}^{-\frac{j}{k}}
\\
&=\frac{1}{\mu(Q_i)}\frac{C_\mathrm{D}^{-\frac{i}{k}}}
{2C_\mathrm{D}\,\big(1-C_\mathrm{D}^{-\frac1k}\big)}.
\end{align*}
Using $\mu(A_i)\ge\frac{1}{C_\mathrm{D}}\,\mu(Q_i)$ from~\eqref{est:annuli}, we obtain
\[
M^\mathscr{D}f_k(x)\ge\frac{f_k(x)}
{2C_\mathrm{D}^2\,\big(1-C_\mathrm{D}^{-\frac1k}\big)}.
\]
Because the right-hand side is zero outside the support of~$f_k$, the inequality 
holds for all~$x\in\Omega$. Since $C_\mathrm{D}^{-\frac1k}\to1$ as~$k\to\infty$,
the norms~$\|M^\mathscr{D}f_k\|_{p(\cdot)}$ cannot be uniformly 
bounded by~$\|f_k\|_{p(\cdot)}$. Thus $M^\mathscr{D}$ is unbounded 
on~$L^{p(\cdot)}(\Omega,d,\mu)$, which completes the proof.
\end{proof}
In fact, non-atomicity is the sharp condition for the necessity of $p_->1$. As the next 
theorem shows, the presence of even a single atom destroys this necessity, allowing the 
maximal operator to remain bounded when $p_-=1$.
\begin{theorem}[At least one atom]\label{th:atom}
If a space of homogeneous type~$(\Omega,d,\mu)$ has an atom, there exists
an exponent $p(\cdot)\in\mathcal{P}(\Omega,\mu)$ with $p_-=1$ such that 
$M$~is \emph{bounded} on~$L^{p(\cdot)}(\Omega,d,\mu)$. 
\end{theorem}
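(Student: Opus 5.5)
Let $x_0$ be an atom, so $\mu(\{x_0\})=:m_0>0$. The plan is to take the exponent
\[
p(x):=\begin{cases}1,& x=x_0,\\ 2,& x\neq x_0,\end{cases}
\]
so that $p_-=1$, since the single point $x_0$ has positive measure. We then show that $M$ is bounded on $L^{p(\cdot)}(\Omega,d,\mu)$ by splitting $f=f\mathbf{1}_{\{x_0\}}+f\mathbf{1}_{\Omega\setminus\{x_0\}}=:f_0+f_1$. Sublinearity gives $Mf\le Mf_0+Mf_1$, and it suffices to bound each term in the $L^{p(\cdot)}$ norm by a constant times $\|f\|_{p(\cdot)}$. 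Throughout we use the elementary observation that on any set where the exponent is constant, the $L^{p(\cdot)}$ norm of a function supported there coincides with the corresponding constant-exponent norm. We also use $\|g\|_{p(\cdot)}\le \|g\mathbf{1}_{\{x_0\}}\|_{p(\cdot)}+\|g\mathbf{1}_{\Omega\setminus\{x_0\}}\|_{p(\cdot)}$, together with the reverse bound $\max\{\|g\mathbf{1}_{\{x_0\}}\|_{p(\cdot)},\|g\mathbf{1}_{\Omega\setminus\{x_0\}}\|_{p(\cdot)}\}\le\|g\|_{p(\cdot)}$.

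\textbf{The part $f_1$.} On $\Omega\setminus\{x_0\}$ the exponent equals $2$. Hence $\|f_1\|_{p(\cdot)}=\|f_1\|_{L^2}$, and $\|f_1\|_{L^2}\le\|f\|_{p(\cdot)}$ by the remark above. Since $p\le 2$ everywhere, and in particular at the atom, we have $\|Mf_1\|_{p(\cdot)}\le \|Mf_1\,\mathbf{1}_{\{x_0\}}\|_{L^1}+\|Mf_1\|_{L^2}$. The second term is at most $C\|f_1\|_{L^2}$ by the classical $L^2$ boundedness of $M$ \cite[Theorem~3.7]{AM15}. For the first term, note that $\|Mf_1\mathbf{1}_{\{x_0\}}\|_{L^1}=m_0\,Mf_1(x_0)$, while $Mf_1(x_0)\le \|Mf_1\|_\infty$ on the atom. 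Since $\{x_0\}$ has positive measure, $Mf_1(x_0)^2 m_0\le\|Mf_1\|_{L^2}^2$, which gives $m_0\,Mf_1(x_0)\le m_0^{1/2}\|Mf_1\|_{L^2}\le C m_0^{1/2}\|f_1\|_{L^2}$.

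\textbf{The part $f_0$.} Write $f_0=c\mathbf{1}_{\{x_0\}}$. Then $\|f_0\|_{p(\cdot)}=|c|m_0$, and $Mf_0(x)=|c|m_0\sup_{B\ni x,x_0}\mu(B)^{-1}$. At $x_0$ itself this gives $Mf_0(x_0)\le |c|$, so the contribution on the atom is $\le |c|m_0=\|f_0\|_{p(\cdot)}$. Off the atom the $L^2$ norm must be controlled, which requires
\[
\int_{\Omega}\Big(\sup_{B\ni x,x_0}\frac{1}{\mu(B)}\Big)^2d\mu(x)<\infty.
\]
This is the main step. The natural estimate is $\sup_{B\ni x,x_0}\mu(B)^{-1}\lesssim \mu(B(x_0,d(x,x_0)))^{-1}$, up to quasi-metric and doubling constants. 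Decomposing $\Omega$ into the annuli $\{2^{j}\le d(x,x_0)<2^{j+1}\}$ around $x_0$, the integral becomes a sum of the form $\sum_j \mu(B(x_0,2^{j+1}))/\mu(B(x_0,2^j))^2$. Doubling bounds each term by $C_\mu/\mu(B(x_0,2^j))$.

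\textbf{Main obstacle.} The resulting sum of $1/\mu(B(x_0,2^j))$ is harmless for small $j$, since balls near $x_0$ have measure at least $m_0$ and the small radii collapse onto the atom. For large $j$ one needs the ball measures to grow geometrically, i.e.\ some reverse doubling at large scales. This may fail in unbounded spaces with slowly growing balls. If that happens, I would modify the exponent so that it equals $1$ only at $x_0$ but is not identically $2$ elsewhere, and choose the exponent on the complement to make the tail integrable. A convenient choice is $p(x)=\infty$ off the atom. Then $Mf_0\le |c|$ and $\|Mf_0\mathbf{1}_{\Omega\setminus\{x_0\}}\|_\infty\le|c|=\|f_0\|_{p(\cdot)}/m_0$, with no integrability issue at all. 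For $f_1$, the $L^\infty$ bound for $M$ gives $\|Mf_1\|_\infty\le\|f_1\|_\infty\le\|f\|_{p(\cdot)}$, and on the atom $m_0 Mf_1(x_0)\le m_0\|f_1\|_\infty$. Thus $p=1$ at $x_0$ and $p=\infty$ elsewhere is the choice I would actually commit to, since it sidesteps the geometric obstacle entirely. The only remaining checks are routine: that the two-piece norm splitting works with constants depending only on $m_0$, and that $M$ is bounded on $L^\infty$.
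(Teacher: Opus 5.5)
Your committed choice, $p=1$ at $x_0$ and $p=\infty$ elsewhere, is correct. It is a mild variant of the paper's argument rather than the same one.

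The paper takes $p=2$ off the atom and proves directly that $L^{p(\cdot)}$ and $L^2$ coincide with equivalent norms, using $\|f\mathbf{1}_{\{x_0\}}\|_{2}=m_0^{-1/2}\|f\mathbf{1}_{\{x_0\}}\|_{1}$. Boundedness of $M$ is then inherited from the $L^2$ maximal theorem.

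With your exponent one has $\|f\|_{p(\cdot)}=\max\{m_0|f(x_0)|,\ \|f\mathbf{1}_{\Omega\setminus\{x_0\}}\|_\infty\}$. This is equivalent to $\|f\|_\infty$ with constants depending only on $m_0$, so only the trivial bound $Mf\le\|f\|_\infty$ is needed; your $f_0/f_1$ splitting is fine but not even necessary.

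The mechanism is the same in both. On a single atom all Lebesgue norms agree up to powers of $m_0$, so $L^{p(\cdot)}$ collapses to a classical space on which $M$ is bounded, and any constant $q\in(1,\infty]$ off the atom would work. Your version is more elementary, since it avoids the Hardy--Littlewood theorem entirely. The paper's version has $p_+=2<\infty$, so it shows that the necessity of $p_->1$ fails even among bounded exponents, which is the more informative counterexample.

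The obstacle you raised for $p=2$ does not exist. Since $f_0=c\,\mathbf{1}_{\{x_0\}}$ lies in $L^2$ with $\|f_0\|_{2}=|c|\,m_0^{1/2}=m_0^{-1/2}\|f_0\|_{p(\cdot)}$, the same $L^2$ bound you used for $f_1$ gives $\|Mf_0\|_2\le C_2\,m_0^{-1/2}\|f_0\|_{p(\cdot)}$. This holds in every space of homogeneous type, with no reverse doubling; it is exactly the paper's proof.

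Your annulus estimate only looked problematic because you replaced $\mu(A_j)$ by $\mu(B(x_0,2^{j+1}))$. Keep $\mu(A_j)=\mu(B_{j+1})-\mu(B_j)$ with $B_j:=B(x_0,2^j)$ and use doubling. Then each term $\mu(A_j)/\mu(B_j)^2$ is at most $C_\mu\bigl(\mu(B_j)^{-1}-\mu(B_{j+1})^{-1}\bigr)$, and the series telescopes.
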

\begin{proof}
Let $A:=\{a\}$ be an atom, that is, $0<\mu(A)<\infty$. The finiteness of~$\mu(A)$
follows automatically since balls have finite measure. Define 
$p(\cdot)\in\mathcal{P}(\Omega,\mu)$ by
\[
p(x)=\begin{cases}
1 & \text{if }x\in A,
\\
2 & \text{if }x\in\Omega\setminus A.
\end{cases}
\]
Clearly, $p_-=1$. To prove that~$M$ is bounded on~$L^{p(\cdot)}(\Omega,d,\mu)$, 
we use the following norm identities:
\[
\|f\mathbf{1}_A\|_{p(\cdot)}=\|f\mathbf{1}_A\|_1
\quad\text{and}\quad
\|f\mathbf{1}_{\Omega\setminus A}\|_{p(\cdot)}=
\|f\mathbf{1}_{\Omega\setminus A}\|_2,
\]
as well as
\[
\|f\mathbf{1}_A\|_2=|f(a)|\,\mu(A)^{\frac12}=\mu(A)^{-\frac12}\,\|f\mathbf{1}_A\|_1.
\]
Together, they imply the norm inequality 
\begin{align*}
\|f\|_2&\le\|f\mathbf{1}_A\|_2+\|f\mathbf{1}_{\Omega\setminus A}\|_2
\\
&=\mu(A)^{-\frac12}\,\|f\mathbf{1}_A\|_{p(\cdot)}
+\|f\mathbf{1}_{\Omega\setminus A}\|_{p(\cdot)}
\\
&\le (\mu(A)^{-\frac12}+1)\,\|f\|_{p(\cdot)}.
\end{align*}
Since~$M$ is bounded on~$L^2(\Omega,d,\mu)$ with some operator norm~$C_2$, we obtain
\begin{align*}
\|Mf\|_{p(\cdot)}&\le\|(Mf)\mathbf{1}_A\|_1
+\|(Mf)\mathbf{1}_{\Omega\setminus A}\|_2
\\
&=\mu(A)^{\frac12}\,\|(Mf)\mathbf{1}_A\|_2
+\|(Mf)\mathbf{1}_{\Omega\setminus A}\|_2
\\
&\le(\mu(A)^\frac12+1)\,\|Mf\|_2
\\
&\le C_2\,\frac{(\mu(A)^\frac12+1)^2}{\mu(A)^\frac12}\,\|f\|_{p(\cdot)}.
\end{align*}
This completes the proof.
\end{proof}
Theorems~\ref{th:KS} and~\ref{th:atom} provide a precise classification: 
a space~$(\Omega,d,\mu)$ with a Borel-semiregular measure~$\mu$ is 
non-atomic \emph{if and only if} $p_->1$ is necessary for the boundedness of~$M$ 
on~$L^{p(\cdot)}(\Omega,d,\mu)$ for every exponent 
$p(\cdot)\in\mathcal{P}(\Omega,\mu)$.

\section{A Jensen-Type Inequality and Dyadic $A_\infty$ Weights}
\label{sec:Jensen}
The rest of the paper focuses on the upper endpoint~$p_+=\infty$.
This section develops the machinery for the proof of our main 
Theorem~\ref{th:main-inf}, which addresses this limiting case. 
We begin with a Jensen-type inequality, extending the 
refinement~\cite[Lemma~5.2]{ADK26} of the original Euclidean 
inequality by Kopaliani~\cite[Lemma~2.4]{TK07} to spaces of homogeneous type.
\begin{lemma}\label{le:Jensen}
Let $q(\cdot)\in\mathcal{A}^\mathscr{D}_\textnormal{loc}(\Omega,d,\mu)$ with 
$1<q_-\le q_+<\infty$, and $\lambda:=\min\{\frac12,\frac{1}
{2[q]_{\mathcal{A}}^\mathscr{D}}\}$. 
If $f\in L^{q(\cdot)}(\Omega,d,\mu)$ satisfies $\|f\|_{q(\cdot)}\le\frac12$
and $|f|_Q\ge1$ for some cube $Q\in\mathscr{D}$, then 
\[
\int_Q (\lambda\,|f|_{Q})^{q(x)}\,d\mu(x)
\le
\int_Q |2f(x)|^{q(x)}\,d\mu(x).
\]
\end{lemma}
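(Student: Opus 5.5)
The plan is to compare the constant function $\lambda|f|_Q\mathbf{1}_Q$ with $2f\mathbf{1}_Q$: first at the level of norms, using H\"older's inequality together with the dyadic Muckenhoupt condition, and then at the level of semimodulars, using the unit ball and unit sphere properties of Theorem~\ref{th:toolbox}(a),(b), which are available because $q_+<\infty$. I have a clean route for the norm step but not yet for the passage to semimodulars.

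\emph{Step 1 (norm comparison).} By H\"older's inequality, Theorem~\ref{th:toolbox}(d), and the definition of $[q]^\mathscr{D}_\mathcal{A}$,
\[
|f|_Q\,\mu(Q)=\int_Q|f|\,d\mu\le 2\,\|f\mathbf{1}_Q\|_{q(\cdot)}\|\mathbf{1}_Q\|_{q'(\cdot)}
\le 2[q]^\mathscr{D}_\mathcal{A}\,\frac{\mu(Q)}{\|\mathbf{1}_Q\|_{q(\cdot)}}\,\|f\mathbf{1}_Q\|_{q(\cdot)}.
\]
Put $c:=\lambda|f|_Q$. Since $\lambda\le 1/(2[q]^\mathscr{D}_\mathcal{A})$, this gives
\[
u:=\|c\,\mathbf{1}_Q\|_{q(\cdot)}\le\|f\mathbf{1}_Q\|_{q(\cdot)}=:t\le\tfrac12 .
\]
Also $c\ge\lambda$, because $|f|_Q\ge1$.

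\emph{Step 2 (unit sphere normalisation).} Assume $f\mathbf{1}_Q\neq0$; otherwise both sides vanish. By Theorem~\ref{th:toolbox}(b),
\[
\int_Q (c/u)^{q(x)}\,d\mu=1
\quad\text{and}\quad
\int_Q |f/t|^{q(x)}\,d\mu=1 .
\]
The target inequality $\int_Q c^{q}\le\int_Q|2f|^{q}$ therefore asks us to compare
\[
\int_Q u^{q(x)}\,(c/u)^{q(x)}\,d\mu
\quad\text{with}\quad
\int_Q (2t)^{q(x)}\,|f/t|^{q(x)}\,d\mu,
\]
where $u\le t$ and $2t\le1$. Both integrals are averages of the same type of weight $\tau^{q(x)}$ against probability densities on $Q$. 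The factor $2$ has to absorb the discrepancy between these two densities, and the factor $\lambda\le\frac12$ has to absorb the gap between $u$ and $2t$.

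\emph{Step 3 (main obstacle: turning the norm inequality into a semimodular inequality).} Pointwise comparison fails, so the first thing I will try is a duality and Young's inequality argument. Write
\[
\int_Q c^{q}\,d\mu=\fint_Q\!\int_Q \lambda|f(y)|\,c^{q(x)-1}\,d\mu(x)\,d\mu(y).
\]
Since $c^{(q-1)q'}=c^{q}$, Young's inequality
\[
|g|\,c^{q-1}\le \frac{|g|^{q}}{q}+\frac{c^{q}}{q'}
\]
is natural here. To make it applicable I will use the Muckenhoupt bound on $\|\mathbf{1}_Q\|_{q'(\cdot)}$ from Step~1 to move the average of $f$ onto the point $x$, and then absorb the term $c^{q}/q'$ into the left-hand side. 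The margin needed for this absorption should come from $\lambda\le\frac12$ and from the factor $2$ in $|2f|$. Should this absorption not close, the fallback is to split $Q$ into the set where $q(x)$ is close to its essential infimum over $Q$ and its complement, exactly as in Kopaliani's original argument~\cite[Lemma~2.4]{TK07} and~\cite[Lemma~5.2]{ADK26}. On each piece I would apply the norm inequality of Step~1 and use $c\ge\lambda$ to control $c^{q(x)}$ by $c^{q_-(Q)}$ up to a factor depending only on $\lambda$. The only non-Euclidean input is the replacement of cubes and the Euclidean $A$-condition by dyadic cubes and $[q]^\mathscr{D}_\mathcal{A}$, which enters solely through Step~1.
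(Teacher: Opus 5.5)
Your Steps 1 and 2 are correct, but as you note they only yield the norm comparison $\|\lambda|f|_Q\mathbf{1}_Q\|_{q(\cdot)}\le\|f\mathbf{1}_Q\|_{q(\cdot)}$. For a variable exponent, a norm inequality does not give a semimodular inequality, let alone one with the exact constants $\lambda$ and $2$ and no multiplicative constant in front. Step~3 is the entire content of the lemma, and neither of your routes closes it.

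\textbf{Why your routes fail.} In the Young route, the integrand $\lambda|f(y)|\,c^{q(x)-1}$ couples two different points. Young with exponent $q(x)$ produces $(\lambda|f(y)|)^{q(x)}$, and $\fint_Q|f(y)|^{q(x)}\,d\mu(x)$ is not controlled by $|f(y)|^{q(y)}$ unless $q(\cdot)$ has some regularity. Young with exponent $q(y)$ produces $c^{(q(x)-1)q'(y)}$, which does not recombine into $c^{q(x)}$. The Muckenhoupt condition only controls the product $\|\mathbf{1}_Q\|_{q(\cdot)}\|\mathbf{1}_Q\|_{q'(\cdot)}$, so it cannot ``move the average onto the point $x$'' pointwise. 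The fallback fails for a related reason. Only $q(\cdot)\in\mathcal{A}^\mathscr{D}_\textnormal{loc}$ is assumed, so the oscillation of $q(\cdot)$ on $Q$ is arbitrary. Moreover, $c=\lambda|f|_Q$ is bounded only by $1/(2\|\mathbf{1}_Q\|_{q(\cdot)})$, which is not uniform in $Q$. Hence $c^{q(x)-q_-(Q)}$ admits no bound in terms of $\lambda$ alone, and any splitting would in any case cost a multiplicative constant that the statement does not permit.

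\textbf{The missing idea.} What you need is Lerner's dynamic parameter, which removes exactly this exponent mismatch. Choose $\alpha>0$ with $\fint_Q\alpha^{q'(y)-1}\,d\mu(y)=|f|_Q$; this is possible because the left side is continuous and strictly increasing from $0$ to $\infty$, since $1<(q')_-\le(q')_+<\infty$. Then $|f|_Q\ge1$ forces $\alpha\ge1$. H\"older, $\|f\|_{q(\cdot)}\le\frac12$, and the unit sphere property force $\alpha\le1/\|\mathbf{1}_Q\|_{q'(\cdot)}$. Writing $\alpha^{q'(y)-1}=\alpha^{q'(y)-q'(x)}\alpha^{q'(x)-1}$ and using $(q'(x)-1)q(x)=q'(x)$ gives the exact factorization
\[
\int_Q(\lambda|f|_Q)^{q(x)}\,d\mu(x)
=\lambda\fint_Q\Big(\lambda\fint_Q\alpha^{q'(y)-q'(x)}\,d\mu(y)\Big)^{q(x)-1}d\mu(x)\cdot\int_Q\alpha^{q'(y)}\,d\mu(y).
\]
The first factor is at most $1$. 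To see this, bound $\alpha^{q'(y)-q'(x)}\le1+\|\mathbf{1}_Q\|_{q'(\cdot)}^{q'(x)-q'(y)}$ and use $(1+v)^{r}\le2^{r}(1+v^{r})$. Then apply the unit sphere property to $\mathbf{1}_Q/\|\mathbf{1}_Q\|_{q'(\cdot)}$ and to $\mathbf{1}_Q/\|\mathbf{1}_Q\|_{q(\cdot)}$, together with $2\lambda[q]_\mathcal{A}^\mathscr{D}\le1$. This is where the Muckenhoupt condition genuinely enters.

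For the second factor, the identity $\int_Q\alpha^{q'(y)}\,d\mu(y)=\alpha\int_Q|f|\,d\mu$ provides the absorption you were after:
\[
\int_Q\alpha^{q'}\,d\mu=2\alpha\int_Q|f|\,d\mu-\int_Q\alpha^{q'}\,d\mu\le\int_Q|2f|^{q}\,d\mu .
\]
The last step is Young's inequality $2\alpha|f(y)|\le\alpha^{q'(y)}+|2f(y)|^{q(y)}$, now applied with matched exponents at the single point $y$.
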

\begin{proof}
We use the dynamic parameter method due to Lerner~\cite[Lemma~3.1]{AL10}. 
Define 
\[
I(\alpha):=\fint_Q \alpha^{q'(y)-1}\,d\mu(y)
\quad\textnormal{for }\alpha>0.
\]
Since $1<(q')_-\le(q')_+<\infty$, the function $I(\alpha)$ is continuous and strictly 
increasing from $0$ to $\infty$. Therefore, choose $\alpha>0$ such that 
$I(\alpha)=|f|_Q$. The hypothesis $|f|_Q\ge 1$ implies that $\alpha\ge1$.
On the other hand, by H\"older's inequality and $\|f\|_{q(\cdot)}\le\frac12$, we have
$|f|_Q\le\frac{\|\mathbf{1}_Q\|_{q'(\cdot)}}{\mu(Q)}$. Since $(q')_+<\infty$,  
the unit sphere property gives
\[
I(\alpha)\le 
\|\mathbf{1}_Q\|_{q'(\cdot)} 
\fint_Q \bigg(\frac1{\|\mathbf{1}_Q\|_{q'(\cdot)}}\bigg)^{q'(y)}d\mu(y)
=\fint_Q \bigg(\frac{1}{\|\mathbf{1}_Q\|_{q'(\cdot)}}\bigg)^{q'(y)-1}d\mu(y).
\]
Comparing the integrands establishes an upper bound for $\alpha$. Altogether,
\begin{equation}\label{est:alpha}
1\le\alpha\le\frac{1}{\|\mathbf{1}_Q\|_{q'(\cdot)}}.
\end{equation}

Factorize the left-hand side of the desired inequality as
\begin{equation}\label{eq:factors}
\begin{aligned}
\int_Q (\lambda\,|f|_{Q})^{q(x)}\,d\mu(x) 
&=\int_Q \bigg(\lambda \fint_Q \alpha^{q'(y)-1}\,d\mu(y)\bigg)^{q(x)}d\mu(x)
\\
&=\int_Q \bigg(\lambda \fint_Q \alpha^{q'(y)-q'(x)}\,d\mu(y)\bigg)^{q(x)}
\alpha^{q'(x)}\,d\mu(x)
\\
&=\lambda \fint_{Q}
\bigg(\lambda \fint_Q \alpha^{q'(y)-q'(x)}\,d\mu(y)\bigg)^{q(x)-1}\,d\mu(x)
\ \int_{Q}\alpha^{q'(y)}\,d\mu(y).
\end{aligned}
\end{equation}
We show that the first factor is bounded by $1$. For any $x\in Q$, 
estimates~\eqref{est:alpha} imply
\[
\fint_Q \alpha^{q'(y)-q'(x)}\,d\mu(y)
\le\fint_Q \bigg[1+\bigg(\frac1{\|\mathbf{1}_Q\|_{q'(\cdot)}}\bigg)^{q'(y)-q'(x)} 
\bigg]\,d\mu(y)=1+\frac{\|\mathbf{1}_Q\|_{q'(\cdot)}^{q'(x)}}{\mu(Q)}.
\]
Substituting this bound and applying the inequality $(u+v)^r\le 2^r\,(u^r+v^r)$, in 
view of $q(\cdot)\in\mathcal{A}^\mathscr{D}_\textnormal{loc}(\Omega,d,\mu)$ and the 
definition of~$\lambda$, we obtain
\begin{align*}
\lambda \fint_{Q}
\bigg(\lambda \fint_Q \alpha^{q'(y)-q'(x)}\,&d\mu(y)\bigg)^{q(x)-1}\,d\mu(x)
\\
&\le\fint_{Q} \lambda^{q(x)}\,
\bigg[1+\frac{\|\mathbf{1}_Q\|_{q'(\cdot)}^{q'(x)}}{\mu(Q)}\bigg]^{q(x)-1}\,d\mu(x)
\\
&\le\frac12\fint_Q (2\lambda)^{q(x)}\,d\mu(x)
+\frac12\int_Q \bigg(\frac{2\lambda\,\|\mathbf{1}_Q\|_{q'(\cdot)}}
{\mu(Q)}\bigg)^{q(x)}\,d\mu(x)
\\
&\le\frac12+\frac12\int_Q \bigg(\frac{2\lambda[q]_\mathcal{A}^\mathscr{D}}
{\|\mathbf{1}_Q\|_{q(\cdot)}}\bigg)^{q(x)}\,d\mu(x)
\\
&\le\frac12+\frac12\int_Q 
\bigg(\frac{1}{\|\mathbf{1}_Q\|_{q(\cdot)}}\bigg)^{q(x)}\,d\mu(x)
\\
&=1.
\end{align*}

Representation~\eqref{eq:factors}, the identity $I(\alpha)=|f|_Q$, and Young's 
inequality $2\alpha\,|f(y)|\le\alpha^{q'(y)}+|2f(y)|^{q(y)}$ therefore yield
\begin{align*}
\int_Q (\lambda\,|f|_Q)^{q(x)}\,d\mu(x)
&\le \int_Q \alpha^{q'(y)}\,d\mu(y)
\\
&=2\alpha\int_{Q}|f(y)|\,d\mu(y)-\int_{Q}\alpha^{q'(y)}\,d\mu(y)
\\
&\le \int_Q \Big(\alpha^{q'(y)}+|2f(y)|^{q(y)}\Big)\,d\mu(y)
-\int_{Q}\alpha^{q'(y)}\,d\mu(y)
\\
&=\int_Q |2f(x)|^{q(x)}\,d\mu(x).
\end{align*}
This is the desired inequality.
\end{proof}
We use Lemma~\ref{le:Jensen} to show that specific variable-exponent weights 
belong to the following dyadic $A_\infty$ class. Recall that a weight is a 
nonnegative, locally integrable function on~$\Omega$. For any $\mu$-measurable set~$E$,
we adopt the notation $w(E):=\int_E w(x)\,d\mu(x)$.
\begin{definition}\label{def:A-inf}
A weight $w$ is a \emph{dyadic $A_\infty$ weight} on a cube $Q\in\mathscr{D}$,  
denoted $w\in A^\mathscr{D}_\infty(Q)$, if there exist $\alpha,\beta\in(0,1)$ such that 
for every $Q'\in\mathscr{D}$ with $Q'\subseteq Q$, 
and every $\mu$-measurable set $E\subseteq Q'$, 
\[
\mu(E)\le\alpha\,\mu(Q')
\quad\text{implies}\quad
w(E)\le\beta\,w(Q').
\]
The parameters $\alpha$ and $\beta$ are called the 
\emph{$A_\infty^\mathscr{D}$-constants} of $w$ on $Q$.
\end{definition}
Dyadic $A_\infty$ weights in spaces of homogeneous type admit an analogue of the 
classical Coifman--Fefferman power-decay characterization~\cite{CF74}. The non-trivial 
necessity direction in the following theorem is proved in~\cite[Theorem~4.3]{KST26}
via a Calder\'on--Zygmund argument and the Lebesgue differentiation theorem,
which requires the measure to be Borel-semiregular.
\begin{theorem}[Power decay for $A_\infty^\mathscr{D}$ weights]\label{th:equiv-Ainf}
Let $(\Omega,d,\mu)$ be a space of homogeneous type with a Borel-semiregular 
measure~$\mu$, and let~$\mathscr{D}$ be a dyadic grid on~$\Omega$.
For any~$Q\in\mathscr{D}$, one has $w\in A^\mathscr{D}_\infty(Q)$ 
\emph{if and only if} there exist $\eta\ge1$ and $\varepsilon\in(0,1)$ such that
for every $Q'\in\mathscr{D}$ with $Q'\subseteq Q$, 
and every $\mu$-measurable set $E\subseteq Q'$,
\[
\frac{w(E)}{w(Q')}\le \eta\bigg(\frac{\mu(E)}{\mu(Q')}\bigg)^\varepsilon.
\]
Moreover, if $w\in A^\mathscr{D}_\infty(Q)$, then~$\eta$ and~$\varepsilon$ 
depend only on the $A_\infty^\mathscr{D}$-constants of~$w$ on~$Q$ and the dyadic 
doubling constant~$C_\mathrm{D}$.
\end{theorem}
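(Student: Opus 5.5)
The sufficiency direction (power decay implies $A^\mathscr{D}_\infty(Q)$) is immediate, so I would dispose of it first. Suppose the power-decay inequality holds with $\eta\ge1$ and $\varepsilon\in(0,1)$. Choose $\alpha\in(0,1)$ so small that $\eta\,\alpha^\varepsilon\le\frac12$, and set $\beta:=\frac12$. Then for every $Q'\in\mathscr{D}$ with $Q'\subseteq Q$ and every measurable $E\subseteq Q'$ with $\mu(E)\le\alpha\,\mu(Q')$, we get
\[
w(E)\le\eta\,\alpha^\varepsilon\,w(Q')\le\tfrac12\,w(Q').
\]
Hence $w\in A^\mathscr{D}_\infty(Q)$, and this direction does not use semiregularity.

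For the necessity direction, the statement says it is proved in \cite[Theorem~4.3]{KST26}; I would nonetheless sketch the Calder\'on--Zygmund route, because the dependence of $\eta,\varepsilon$ matters later. Fix $Q'\subseteq Q$ and $E\subseteq Q'$, and apply Lemma~\ref{le:CZ} to $f=\mathbf{1}_E$ localized to the dyadic tree below $Q'$ (i.e.\ the dyadic maximal operator restricted to subcubes of $Q'$). Use the levels $a^{-k}$ with $a>2C_\mathrm{D}$ chosen so that $C_\mathrm{D}a^{-1}\le\alpha$. This produces nested open-set families $\Omega_k\subseteq Q'$ of maximal subcubes with $\mu(Q_j^{k}\cap\Omega_{k+1})\le C_\mathrm{D}a^{-1}\mu(Q_j^k)\le\alpha\,\mu(Q_j^k)$. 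Applying the $A_\infty$ hypothesis on each $Q_j^k$ to the set $Q_j^k\cap\Omega_{k+1}$ gives $w(\Omega_{k+1})\le\beta\,w(\Omega_k)$, and iterating gives $w(\Omega_m)\le\beta^{m-m_0}w(Q')$.

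Dyadic differentiation (Theorem~\ref{th:LDT}) shows that $E\subseteq\Omega_m$ up to a null set for every admissible $m$; this is exactly where Borel-semiregularity enters. The first admissible level satisfies $a^{-m_0}\approx\mu(E)/\mu(Q')$ (if $\mu(E)/\mu(Q')$ is large the bound is trivial with $\eta\ge1$). Choosing $m$ with $a^{-m}\gtrsim\mu(E)/\mu(Q')$ then yields
\[
\frac{w(E)}{w(Q')}\le\eta\Big(\frac{\mu(E)}{\mu(Q')}\Big)^{\varepsilon},
\qquad \varepsilon=\frac{\log(1/\beta)}{\log a}.
\]
Here $\eta$ depends only on $\beta$, $a$ and $C_\mathrm{D}$, so both constants depend only on $\alpha,\beta,C_\mathrm{D}$, as claimed.

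The main obstacle I expect is bookkeeping the starting level. Lemma~\ref{le:CZ} requires $a^k>|f|_{Q'}$ when working on the bounded "space" $Q'$, so $m_0$ must be chosen relative to $\mu(E)/\mu(Q')$. In addition, the maximal cubes must be confirmed to lie strictly inside $Q'$, which they do since the averages over $Q'$ itself fall below the threshold.
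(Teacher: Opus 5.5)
Your argument is correct and matches the route the paper indicates: the paper gives no proof of its own but defers the nontrivial direction to \cite[Theorem~4.3]{KST26}, describing it as a Calder\'on--Zygmund argument plus the differentiation theorem. That is exactly your scheme of iterating the $A_\infty^\mathscr{D}$ condition through the localized level sets of $M^\mathscr{D}\mathbf{1}_E$ and using dyadic differentiation to place $E$ inside the top one. Only your indexing needs repair. Set $\Omega_m:=\{x\in Q':\sup_{Q''\ni x,\,Q''\subseteq Q'}|\mathbf{1}_E|_{Q''}>a^m\}$ and let $m_0$ be minimal with $a^{m_0}>\mu(E)/\mu(Q')$. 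The gain comes from the $-1-m_0\approx\log_a(\mu(Q')/\mu(E))$ steps up to the top level $m=-1$, giving $w(E)\le w(\Omega_{-1})\le\beta^{-1-m_0}\,w(Q')\le\beta^{-1}a^{\varepsilon}\big(\mu(E)/\mu(Q')\big)^{\varepsilon}$ with $\varepsilon=\log(1/\beta)/\log a$. Your final choice ``$a^{-m}\gtrsim\mu(E)/\mu(Q')$'' would make the chain trivial. Finally, replace $\varepsilon$ by $\min\{\varepsilon,\frac12\}$ to guarantee $\varepsilon\in(0,1)$.
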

We now show that $(\lambda t)^{q(\cdot)}\in A_\infty^\mathscr{D}(Q)$ under suitable 
assumptions on the exponent~$q(\cdot)$ and the scalars~$\lambda$ and~$t$, 
and deduce the corresponding power-decay inequality.
\begin{lemma}[Uniform power decay for $(\lambda t)^{q(\cdot)}$]\label{le:power-decay}
Let $(\Omega,d,\mu)$ be a space of homogeneous type with a Borel-semiregular 
measure~$\mu$, and let~$\mathscr{D}$ be a dyadic grid on~$\Omega$. 
For~$q(\cdot)\in\mathcal{A}^\mathscr{D}_\textnormal{loc}(\Omega,d,\mu)$   
with $1<q_-\le q_+<\infty$, set $\lambda:=\min\{\frac12,\frac{1}
{2[q]_{\mathcal{A}}^\mathscr{D}}\}$. Suppose a cube~$Q\in\mathscr{D}$ and
a scalar~$t$ satisfy
\[
1\le t\le\frac{[q]_\mathcal{A}^\mathscr{D}}{\|\mathbf{1}_Q\|_{q(\cdot)}}.
\]
Then there exist constants $\eta\ge1$ and $\varepsilon\in(0,1)$, depending only on 
$q_+$, $[q]_\mathcal{A}^\mathscr{D}$, and the dyadic doubling constant~$C_\mathrm{D}$, 
such that for any $\mu$-measurable set $E\subseteq Q$,
\[
\int_E (\lambda t)^{q(x)}\,d\mu(x)\le 
\eta \bigg(\frac{\mu(E)}{\mu(Q)}\bigg)^\varepsilon
\int_Q (\lambda t)^{q(x)}\,d\mu(x).
\]
\end{lemma}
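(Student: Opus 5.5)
The plan is to verify that $w:=(\lambda t)^{q(\cdot)}$ belongs to $A_\infty^\mathscr{D}(Q)$ with $A_\infty^\mathscr{D}$-constants $\alpha,\beta$ that depend only on $q_+$, $[q]_\mathcal{A}^\mathscr{D}$ and $C_\mathrm{D}$. Theorem~\ref{th:equiv-Ainf} then supplies $\eta,\varepsilon$ depending only on those quantities, and applying it with $Q'=Q$ gives the claimed inequality. So fix a subcube $Q'\subseteq Q$ in $\mathscr{D}$ and a measurable set $E\subseteq Q'$ with $\mu(E)\le\alpha\,\mu(Q')$. The aim is to show $w(Q'\setminus E)\ge(1-\beta)\,w(Q')$, that is, that $w$ cannot concentrate on small subsets of $Q'$.

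The key tool will be Lemma~\ref{le:Jensen}, applied on $Q'$ to a test function of the form $f=c\,t\,\mathbf{1}_{Q'\setminus E}$. Since $\mu(Q'\setminus E)\ge(1-\alpha)\mu(Q')$, we have $|f|_{Q'}\ge c(1-\alpha)t$. If $c(1-\alpha)\ge1$, then $|f|_{Q'}\ge1$ because $t\ge1$. The hypothesis $\|f\|_{q(\cdot)}\le\frac12$ has to come from the upper bound $t\le[q]_\mathcal{A}^\mathscr{D}/\|\mathbf{1}_Q\|_{q(\cdot)}$: we have $\|f\|_{q(\cdot)}\le ct\,\|\mathbf{1}_{Q}\|_{q(\cdot)}\le c\,[q]_\mathcal{A}^\mathscr{D}$. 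This is fine only if $c\le 1/(2[q]^\mathscr{D}_\mathcal{A})$, which conflicts with $c(1-\alpha)\ge1$. I would therefore rescale. I would apply Lemma~\ref{le:Jensen} to $f=\sigma t\,\mathbf{1}_{Q'\setminus E}$ with $\sigma:=\lambda$ (note $\lambda[q]^\mathscr{D}_\mathcal{A}\le\frac12$). Where the condition $|f|_{Q'}\ge1$ fails, I would use a version of the Jensen argument without that condition. Inspecting the proof, $|f|_Q\ge1$ is used only to get $\alpha\ge1$ in~\eqref{est:alpha}, and the bound $\alpha^{q'(y)-q'(x)}\le 1+\dots$ survives for $\alpha\le1$ with the trivial estimate $\alpha^{q'(y)-q'(x)}\le\max\{1,\alpha^{-((q')_+-1)}\}$-type control. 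A cleaner route is to accept losing a factor: apply the lemma to $f=\lambda t\,\mathbf{1}_{Q'\setminus E}\cdot\lambda^{-1}$ when $t\lambda^{-1}\|\mathbf{1}_Q\|\le\frac12$ fails, and otherwise split into the regime $t\le T_0$, with $T_0$ depending on $[q]^\mathscr{D}_\mathcal{A}$. In that bounded regime $w$ is comparable to $1$ up to a factor $(\lambda T_0)^{q_+}$ and trivially lies in $A_\infty^\mathscr{D}$.

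In the main regime the Jensen inequality gives
\[
\int_{Q'}\big(\lambda\,|f|_{Q'}\big)^{q(x)}\,d\mu(x)\le\int_{Q'\setminus E}|2f(x)|^{q(x)}\,d\mu(x).
\]
The left side is at least $\int_{Q'}(\lambda t\,c_1)^{q(x)}\,d\mu\ge c_1^{q_+}\,w(Q')$ after adjusting scalars, because $|f|_{Q'}$ is a constant multiple of $t$. The right side is at most $C^{q_+}\,w(Q'\setminus E)$. Hence $w(Q'\setminus E)\ge\gamma\,w(Q')$ with $\gamma=\gamma(q_+,[q]^\mathscr{D}_\mathcal{A})>0$, and we can take any $\alpha\in(0,1)$, say $\alpha=\frac12$, and $\beta=1-\gamma$. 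The mismatched scalar inside the power is handled by $(ab)^{q(x)}\ge a^{q_+}b^{q(x)}$ for $a\le1$, so only $q_+$ enters the constants.

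The main obstacle is matching the scalars. The Jensen lemma produces $\lambda|f|_{Q'}$, whereas the weight is exactly $(\lambda t)^{q(\cdot)}$. At the same time the hypotheses $\|f\|\le\frac12$ and $|f|_{Q'}\ge1$ pull $f$'s size in opposite directions, and the gap between them is controlled only by $[q]^\mathscr{D}_\mathcal{A}$ and the window $1\le t\le[q]^\mathscr{D}_\mathcal{A}/\|\mathbf{1}_Q\|_{q(\cdot)}$. This is precisely the ``scaled-weight transition'' mentioned in the Remark. I would handle it by the case split above, treating bounded $t$ trivially and absorbing all constant factors through $q_+$.
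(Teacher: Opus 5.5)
Once the muddled middle paragraph is cleaned up, your argument is the paper's proof: with $\alpha=\frac12$ you take $f=\lambda t\,\mathbf{1}_{Q'\setminus E}$, so that $\|f\|_{q(\cdot)}\le\lambda[q]_\mathcal{A}^\mathscr{D}\le\frac12$, apply Lemma~\ref{le:Jensen} when $t\ge 2/\lambda$ (which forces $|f|_{Q'}\ge1$), and use $\lambda^{q_+}\le w\le 2^{q_+}$ when $1\le t<2/\lambda$; in both cases you obtain $w(Q'\setminus E)\ge(\lambda/4)^{q_+}w(Q')\ge(16[q]_\mathcal{A}^\mathscr{D})^{-q_+}w(Q')$ and conclude with Theorem~\ref{th:equiv-Ainf}. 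Discard the two asides: Lemma~\ref{le:Jensen} cannot be applied to the unscaled function $t\,\mathbf{1}_{Q'\setminus E}$, whose norm may be as large as $[q]_\mathcal{A}^\mathscr{D}$, and your sketched Jensen inequality without $|f|_{Q'}\ge1$ is unnecessary and would not give uniform constants, because the dynamic parameter in the proof of Lemma~\ref{le:Jensen} then drops below $1$ and its negative powers are no longer controlled.
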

\begin{proof}
We first establish that $w:=(\lambda t)^{q(\cdot)}$ is a dyadic $A_\infty$ weight 
on $Q$. Let $E\subseteq Q'\subseteq Q$ with $Q'\in\mathscr{D}$ and 
$\mu(E)\ge\mu(Q')/2$. Define $f:=\lambda t\,\mathbf{1}_E$. By assumption, 
\begin{align*}
\|f\|_{q(\cdot)}&=\lambda t\,\|\mathbf{1}_E\|_{q(\cdot)}
\le \lambda [q]_\mathcal{A}^\mathscr{D}\le\frac12,
\\
|f|_{Q'}&=\frac{\mu(E)}{\mu(Q')}\,\lambda t\ge\frac{\lambda t}{2}.
\end{align*}
We distinguish two cases, keeping in mind that $[q]_\mathcal{A}^\mathscr{D}\ge\frac12$ 
implies $1/\lambda\le 4[q]_\mathcal{A}^\mathscr{D}$.

First, suppose $t\ge2/\lambda$, which ensures $|f|_{Q'}\ge 1$. Applying
Lemma~\ref{le:Jensen} yields
\begin{align*}
w(E)
&\ge 2^{-q_+}\int_{Q'} |2f(x)|^{q(x)}\,d\mu(x)
\ge 2^{-q_+}\int_{Q'} (\lambda\,|f|_{Q'})^{q(x)}\,d\mu(x)
\\
&\ge \Big(\frac{\lambda}{4}\Big)^{q_+} \int_{Q'} (\lambda t)^{q(x)}\,d\mu(x)
\ge (16[q]_\mathcal{A}^\mathscr{D})^{-q_+}\, w(Q').
\end{align*}
Second, suppose $1\le t<2/\lambda$. Since $\lambda< 1$, we have 
$(\lambda t)^{q(x)}\ge\lambda^{q(x)}\ge\lambda^{q_+}$ pointwise. Using this 
and the assumption $\lambda t/2 < 1$, we estimate
\begin{align*}
w(E)&\ge\lambda^{q_+}\,\mu(E)\ge\frac{\lambda^{q_+}}{2}\,\mu(Q')
\ge\frac{\lambda^{q_+}}{2} \int_{Q'} \Big(\frac{\lambda t}{2}\Big)^{q(x)}\,d\mu(x)
\\
&\ge \Big(\frac{\lambda}{4}\Big)^{q_+} \int_{Q'} (\lambda t)^{q(x)}\,d\mu(x)
\ge (16[q]_\mathcal{A}^\mathscr{D})^{-q_+}\, w(Q').
\end{align*}
By Definition~\ref{def:A-inf}, we proved that $w\in A_\infty^\mathscr{D}(Q)$
with the constants $\alpha=\frac12$ and 
$\beta=1-(16[q]_\mathcal{A}^\mathscr{D})^{-q_+}$.
Thus, Theorem~\ref{th:equiv-Ainf} guarantees the desired power-decay inequality
with constants $\eta$ and $\varepsilon$ depending only on~$\beta$ and~$C_\mathrm{D}$.
\end{proof}
\begin{remark}
Lemma~\ref{le:power-decay} is designed to replace~\cite[Lemma~5.3]{ADK26} in our 
generalization of the Euclidean result~\cite[Proposition~5.4]{ADK26}. The original 
proof contained an unjustified step: the transition from~(5.21) to~(5.22) implicitly 
assumed that $(\lambda\,\alpha_{j,k}(g))^{p'(\cdot)}$ belongs to~$A_\infty(Q_j^k)$, 
whereas~\cite[Lemma~5.3]{ADK26} only established this for the unscaled weight 
$(\alpha_{j,k}(g))^{p'(\cdot)}$. Our modified lemma removes this gap from the 
generalized Theorem~\ref{th:aux-US}.
\end{remark}

\section{Boundedness of Dyadic Maximal Operators for $p_+<\infty$}
\label{sec:p+<inf}
We now establish a restricted version of Theorem~\ref{th:main-inf} for 
the dyadic maximal operator~$M^\mathscr{D}$ and bounded exponents. We treat 
\emph{unbounded} and \emph{bounded} spaces of homogeneous type separately, as the
latter setting requires weaker assumptions. 
Our first theorem addresses unbounded spaces, following the core argument 
from~\cite[Proposition~5.4]{ADK26} but systematically simplifying 
its technical execution and presentation.
\begin{theorem}[For unbounded spaces and $p_+<\infty$]\label{th:aux-US} 
Let $(\Omega,d,\mu)$ be 
an \emph{unbounded} space of homogeneous type with a Borel-semiregular measure~$\mu$, 
and let~$\mathscr{D}$ be a dyadic grid on~$\Omega$. Suppose that 
\[
p(\cdot)\in\mathcal{A}^\mathscr{D}_\textnormal{loc}(\Omega,d,\mu)\cap\mathcal{N}(\Omega,\mu)
\quad\text{and}\quad
1<p_-\le p_+<\infty.
\]
Then for all $f\in L^{p(\cdot)}(\Omega,d,\mu)$, 
\begin{equation}\label{ineq:M-D}
\|M^\mathscr{D}f\|_{p(\cdot)}\le c\,\|f\|_{p(\cdot)},
\end{equation}
where $c>0$ depends only on $(p')_+$, $[p]_\mathcal{A}^\mathscr{D}$, 
$[p]_\mathcal{N}$, and the dyadic doubling constant~$C_\mathrm{D}$.
\end{theorem}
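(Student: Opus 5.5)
By homogeneity it suffices to prove $\|M^\mathscr{D}f\|_{p(\cdot)}\le c$ for $f\ge0$ with $\|f\|_{p(\cdot)}\le\frac12$. By the norm conjugate formula, Theorem~\ref{th:toolbox}(e), this reduces to bounding $\int_\Omega M^\mathscr{D}f\,g\,d\mu$ uniformly over bounded $g\ge0$ with finite-measure support and $\|g\|_{p'(\cdot)}\le\frac12$. Split $f=f_1+f_2$ with $f_1:=f\mathbf{1}_{\{f>1\}}$ and $f_2:=f\mathbf{1}_{\{f\le1\}}$.

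For the small part $0\le f_2\le1$, Lemma~\ref{le:N-emb}(b) applied to $q=p$ gives
\[
\max\{\|f_2\|_{p_\infty},\|f_2\|_{p_+}\}\le 3[p]_\mathcal{N}.
\]
Since $p_\infty\ge p_->1$ and $p_+<\infty$, the operator $M^\mathscr{D}$ is bounded on both $L^{p_\infty}$ and $L^{p_+}$; if $p_\infty=\infty$, use the trivial $L^\infty$ bound. Its norms there depend only on $C_\mathrm{D}$ and on $(p_-)'=(p')_+$. Lemma~\ref{le:N-emb}(a) then gives
\[
\|M^\mathscr{D}f_2\|_{p(\cdot)}\lesssim[p]_\mathcal{N}^2.
\]
The factor $(p_+)'\ge1$ causes no dependence on $p_+$, since the constant of $M^\mathscr{D}$ on $L^r$ is at most about $r'$.

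For the large part, note $\varrho_{p(\cdot)}(2f_1)\le1$ and $\|f_1\|_1\le\varrho_{p(\cdot)}(f_1)\le1$, so $f_1\in L^1$. Since $\mu(\Omega)=\infty$, Lemma~\ref{le:CZ} applies for every $k$. Since $f_1\in L^1$ forces $\mu(\Omega_k)\to0$ and $M^\mathscr{D}f_1\le1$ would be handled like $f_2$, it suffices to treat $k\ge0$. Decompose
\[
\int M^\mathscr{D}f_1\,g\le a\sum_{k\ge0}\sum_j |f_1|_{Q_j^k}\int_{Q_j^k\setminus\Omega_{k+1}}g.
\]
The plan follows \cite[Proposition~5.4]{ADK26}. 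Apply Young's inequality
\[
|f_1|_{Q}\,g\le(|f_1|_Q/\lambda)^{p(x)}+(\lambda\alpha_Q(g))^{p'(x)}\cdot(\dots),
\]
or, more cleanly, split $g$ on each annulus $E_j^k:=Q_j^k\setminus\Omega_{k+1}$ into its pieces where $g>1$ and $g\le1$. The piece $g\le1$ is handled by the dual Nekvinda embeddings for $p'$, using $[p']_\mathcal{N}=[p]_\mathcal{N}$.

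On the first side, Lemma~\ref{le:Jensen} with $q=p$ (using $|f_1|_{Q_j^k}>a^k\ge1$) controls $\int_{Q_j^k}(\lambda|f_1|_{Q_j^k})^{p(x)}$ by $\int_{Q_j^k}|2f_1|^{p}$. The decay~\eqref{ineq:decay} together with Lemma~\ref{le:power-decay} then replaces $Q_j^k$ by the disjoint sets $E_j^k$ at geometric cost, summing to a multiple of $\varrho_{p(\cdot)}(2f_1)\le1$. This uses the scalar $t=|f_1|_{Q_j^k}\ge1$, whose upper bound $t\le[p]^\mathscr{D}_\mathcal{A}/\|\mathbf{1}_Q\|_{p(\cdot)}$ follows from Hölder's inequality and $[p]_\mathcal{A}\ge\frac12$.

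On the dual side, treat $g_1:=g\mathbf{1}_{\{g>1\}}$ the same way with $p'$. Here one uses Lemma~\ref{le:Jensen} for $q=p'$ and Lemma~\ref{le:power-decay} with $t=\alpha_{j,k}(g)$ chosen as in Lerner's dynamic parameter. Lemma~\ref{le:power-decay} is needed precisely because the weight is the scaled $(\lambda t)^{p'(\cdot)}$. The constants depend on $(p')_+$, not $p_+$: the decay constants enter through $q_+=(p')_+$, and on the $f$-side one may pass through the dual formulation so that only $p'$ appears with a finite upper bound.

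\textbf{Main obstacle.} I expect the hardest step to be the bookkeeping that turns the double sum into modular quantities of $f_1$ and $g_1$ with constants independent of $p_+$. In particular, the sparse-type summation over $k,j$ must combine~\eqref{ineq:decay} with the $\varepsilon$-power decay, giving $\sum_\ell C_\mathrm{D}^\varepsilon a^{-\ell\varepsilon}<\infty$. A further difficulty is the cross term where $f_1$ is large but $g\le1$, which must be absorbed via Lemma~\ref{le:N-emb} applied to $p'$, using that $M^\mathscr{D}$ is bounded on $L^{p'_\infty}$-dual pairings.
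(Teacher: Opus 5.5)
Your treatment of the small part $f_2$ is sound: Lemma~\ref{le:N-emb}(b) for $p$, the constant-exponent bounds for $M^\mathscr{D}$ on $L^{p_\infty}$ and $L^{p_+}$ with norms $\lesssim(p')_+$, then Lemma~\ref{le:N-emb}(a). The large part, however, has a genuine gap, and your ``first side'' step cannot close.

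\textbf{The $f$-side estimate fails.} Lemma~\ref{le:Jensen} with $q=p$ bounds $\int_{Q_j^k}(\lambda|f_1|_{Q_j^k})^{p(x)}\,d\mu$ by $\int_{Q_j^k}|2f_1|^{p(x)}\,d\mu$. This is an integral over the \emph{whole} cube, and the cubes $Q_j^k$ are nested across $k$. So the sum is $\sum_{k\ge0}\int_{\Omega_k}|2f_1|^{p(x)}\,d\mu$, which counts each point once for every level set containing it. It is not controlled by $\varrho_{p(\cdot)}(2f_1)$, because the mass of $f_1$ sits exactly where $M^\mathscr{D}f_1$ is large.

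Lemma~\ref{le:power-decay} cannot move this mass onto the disjoint sets $E_j^k$. It only describes how the constant-base weight $(\lambda t)^{p(\cdot)}$ distributes inside $Q$. In any case $\mu(E_j^k)>\frac12\mu(Q_j^k)$ by \eqref{ineq:decay}, so there is nothing to gain.

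A pointwise Young splitting of $|f_1|_{Q_j^k}\,g$ on $E_j^k$ is circular: its $f$-part is essentially $\varrho_{p(\cdot)}(cM^\mathscr{D}f_1\mathbf{1}_{\Omega_0})$.

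Moreover, invoking Lemmas~\ref{le:Jensen} and~\ref{le:power-decay} with $q=p$ brings in $p_+$, e.g.\ through $\beta=1-(16[q]_\mathcal{A}^\mathscr{D})^{-q_+}$ in the proof of Lemma~\ref{le:power-decay}. The statement excludes this dependence, and Corollary~\ref{cor:aux-US} relies on its absence to let $p_+\to\infty$.

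\textbf{The missing idea: estimate nothing on the $f$-side.} Write $\sum_{k,j}|f|_{Q_j^k}\int_{E_j^k}g=\int|f|\,Tg$, where $Tg:=\sum_{k,j}\alpha_j^k(g)\mathbf{1}_{Q_j^k}$ and $\alpha_j^k(g):=\mu(Q_j^k)^{-1}\int_{E_j^k}g$. H\"older then reduces everything to $\|Tg\|_{p'(\cdot)}\le c$, and $g$ enters only through the disjoint sets $E_j^k$.
\begin{enumerate}
\item[(1)] For $\alpha_j^k(g)>1$, Lemma~\ref{le:Jensen} applied to $g\mathbf{1}_{E_j^k}$ gives $\int_{Q_j^k}(\lambda\alpha_j^k(g))^{p'(x)}\,d\mu\le\int_{E_j^k}(2g)^{p'(x)}\,d\mu$. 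This is summable by disjointness.
\item[(2)] \eqref{ineq:decay} together with Lemma~\ref{le:power-decay} for the weight $(\lambda\alpha_j^k(g))^{p'(\cdot)}$ yields the factor $a^{-\varepsilon\ell}$ on the slices $Q_j^k\cap D_{k+\ell}$.
\end{enumerate}
Only $(p')_+$ enters, and your splitting of $f$ becomes unnecessary.

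\textbf{Two further issues.} Your dual-side sketch misses a case. When $g>1$ but $\alpha_j^k(g)\le1$, Lemma~\ref{le:Jensen} is unavailable, since it needs average $\ge1$. The paper instead uses $\alpha^{p'(x)}\le\alpha$ together with $\int g\mathbf{1}_{\{g>1\}}\,d\mu\le\varrho_{p'(\cdot)}(g)\le1$.

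Also, discarding $\{M^\mathscr{D}f_1\le1\}$ is not ``like $f_2$'', because $f_1$ has no controlled $L^{p_\infty}$-norm. You would need the weak-$(1,1)$ bound to place $\min\{M^\mathscr{D}f_1,1\}$ in $L^{p_\infty}\cap L^{p_+}$ before applying Lemma~\ref{le:N-emb}(a).
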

\begin{proof}
By the Fatou property applied to the truncations $f_k:=\min\{|f|,k\}\,\mathbf{1}_{B_k}$ 
over an expanding sequence of balls~$B_k$, it suffices to prove~\eqref{ineq:M-D} 
assuming $f$ is bounded and supported in a ball. By the norm conjugate formula
from Theorem~\ref{th:toolbox}(e), this reduces to showing
\begin{equation}\label{ineq:M-D-int}
\int_\Omega M^\mathscr{D}f(x)\,g(x)\,d\mu(x)\le c\,\|f\|_{p(\cdot)}
\end{equation}
for all nonnegative, bounded $g\in L^{p'(\cdot)}(\Omega,d,\mu)$ supported in a 
ball with
\[
\|g\|_{p'(\cdot)}\le\frac12.
\]

Our assumptions ensure 
$f\in L^1(\Omega,d,\mu)$, hence Lemma~\ref{le:CZ} is applied. Fix
$a>2\,C_\mathrm{D}$. For each $k\in\mathbb{Z}$, the level set 
\[
\Omega_k:=\big\{x\in\Omega\,:\,M^\mathscr{D}f(x)>a^k\big\}
=\bigcup_{j}Q_j^k
\]
admits a disjoint decomposition into maximal cubes $Q_j^k\in\mathscr{D}$
such that $|f|_{Q_j^k}>a^k$. Defining $E_j^k:=Q_j^k\setminus\Omega_{k+1}$, the sets 
$\{E_j^k\}_{k,j}$ form a disjoint partition of $\{x\in\Omega\,:\;M^\mathscr{D}f(x)>0\}$.
Introduce the local averages
\[
\alpha_j^k(g):=\frac{1}{\mu(Q_j^k)}\int_{E_j^k} g(y)\,d\mu(y)
\]
and define the auxiliary operator $T$ by
\[
Tg(x):=\sum_{k,j} \alpha_j^k(g)\,\mathbf{1}_{Q_j^k}(x),
\quad x\in\Omega.
\]
Using the pointwise bound $M^\mathscr{D}f\le a^{k+1}<a\,|f|_{Q_j^k}$ on $E_j^k$ 
and H\"older's inequality yields
\begin{align*}
\int_\Omega M^\mathscr{D}f(x)\,g(x)\,d\mu(x)&=
\sum_{k,j} \int_{E_j^k}M^\mathscr{D}f(x)\,g(x)\,d\mu(x)
\\
&\le a\sum_{k,j}
\bigg(
\frac1{\mu(Q_j^k)}\int_{Q_j^k} |f(y)|\,d\mu(y)
\bigg)
\int_{E_j^k}g(x)\,d\mu(x)
\\
&=a\sum_{k,j} \alpha_j^k(g)
\int_\Omega |f(y)|\,\mathbf{1}_{Q_j^k}(y)\,d\mu(y)
\\
&=a\int_\Omega |f(y)|\,Tg(y)\,d\mu(y)
\\
&\le
2a\,\|f\|_{p(\cdot)}\|Tg\|_{p'(\cdot)}.
\end{align*}

Consequently, establishing~\eqref{ineq:M-D-int} reduces to bounding
\begin{equation}\label{ineq:T-est}
\|Tg\|_{p'(\cdot)}\le c,
\end{equation}
where~$c>0$ depends only on $(p')_+$, $[p]_\mathcal{A}^\mathscr{D}$, 
$[p]_\mathcal{N}$, and~$C_\mathrm{D}$. To evaluate this, we slice the operator~$T$
along the annuli $D_{k+\ell}:=\Omega_{k+\ell}\setminus\Omega_{k+\ell+1}$ for 
$\ell\ge0$. Relative to the fixed function~$g$, we define the index sets
\[
\mathscr{I}^+:=\{(k,j):\alpha_j^k(g)>1\}
\quad\textnormal{and}\quad
\mathscr{I}^-:=\{(k,j):\alpha_j^k(g)\le1\}.
\]
Since $\Omega_k=\bigcup_{\ell=0}^\infty D_{k+\ell}$, we can decompose
$Tg=\sum_{\ell=0}^\infty(T_\ell^+g+T_\ell^-g)$, where the restricted linear operators
are defined for any locally integrable $h\ge0$ by
\[
T_\ell^+h:=\sum_{(k,j)\in\mathscr{I}^+} \alpha_j^k(h)\,
\mathbf{1}_{Q_j^k\cap D_{k+\ell}} 
\quad\text{and}\quad 
T_\ell^-h:=\sum_{(k,j)\in\mathscr{I}^-} \alpha_j^k(h)\,
\mathbf{1}_{Q_j^k\cap D_{k+\ell}}.
\]

\paragraph*{Case 1: $(k,j)\in\mathscr{I}^+$.}
By H\"older's inequality, 
$\|g\|_{p'(\cdot)}\le\frac12$, and 
$p(\cdot)\in\mathcal{A}^\mathscr{D}_\textnormal{loc}(\Omega,d,\mu)$,
\[
\alpha_j^k(g)\le
\frac{2}{\mu(Q_j^k)}\,\|g\mathbf{1}_{E_j^k}\|_{p'(\cdot)} 
\|\mathbf{1}_{Q_j^k}\|_{p(\cdot)}
\le\frac{[p]_\mathcal{A}^\mathscr{D}}{\|\mathbf{1}_{Q_j^k}\|_{p'(\cdot)}}.
\]
Setting $\lambda:=\min\{\frac12,\frac1{2[p]_\mathcal{A}^\mathscr{D}}\}$, we apply 
Lemma~\ref{le:power-decay} to the weight $(\lambda\,\alpha_j^k(g))^{p'(\cdot)}$,
yielding power-decay constants~$\eta\ge1$ and $\varepsilon\in(0,1)$ that depend only on 
$(p')_+$, $[p]_\mathcal{A}^\mathscr{D}$, and~$C_\mathrm{D}$
(independent of $k$ and $j$). Crucially, estimate~\eqref{ineq:decay} provides the 
geometric bound
\begin{equation}\label{est:meas-ratio}
\frac{\mu(Q_j^k\cap D_{k+\ell})}{\mu(Q_j^k)}\le C_\mathrm{D}\,a^{-\ell}.
\end{equation}
Proceeding with Lemma~\ref{le:Jensen} (applied to $f:=g\mathbf{1}_{E_j^k}$) allows 
us to bound the semimodular:
\begin{align*}
\varrho_{p'(\cdot)}(\lambda\,T_\ell^+g)
&=\sum_{(k,j)\in\mathscr{I}^+} 
\int_{Q_j^k\cap D_{k+\ell}} (\lambda\,\alpha_j^k(g))^{p'(x)}\,d\mu(x)
\\
&\le \eta\,(C_\mathrm{D}\,a^{-\ell})^\varepsilon
\sum_{(k,j)\in\mathscr{I}^+} 
\int_{Q_j^k} (\lambda\,\alpha_j^k(g))^{p'(x)}\,d\mu(x)
\\
&\le C_\mathrm{D}\,\eta\,a^{-\varepsilon\ell} 
\sum_{(k,j)\in\mathscr{I}^+} 
\int_{E_j^k} (2g(x))^{p'(x)}\,d\mu(x)
\\
&\le C_\mathrm{D}\,\eta\,a^{-\varepsilon\ell},
\end{align*}
where the final inequality holds because $\varrho_{p'(\cdot)}(2g)\le1$. Passing to
the norm via Theorem~\ref{th:toolbox}(c) and using 
$1/\lambda\le 4[p]_\mathcal{A}^\mathscr{D}$ yields the exponential decay
\begin{equation}\label{est:T+}
\|T_\ell^+g\|_{p'(\cdot)}
\le 4[p]_\mathcal{A}^\mathscr{D}\,C_\mathrm{D}\, 
\eta\,a^{-\frac{\varepsilon\ell}{(p')_+}}.
\end{equation}

\paragraph*{Case 2: $(k,j)\in\mathscr{I}^-$.} Denote $g_1:=g\mathbf{1}_{\{g\le1\}}$
and $g_2:=g\mathbf{1}_{\{g>1\}}$, and split 
\[
T_\ell^-g=T_\ell^-g_1+T_\ell^-g_2.
\]
For any constant exponent $q$ satisfying $1<q\le(p')_+$, 
Jensen's inequality and the geometric bound~\eqref{est:meas-ratio} yield
\[
\|T_\ell^-g_1\|_q
\le \bigg[ \sum_{(k,j)\in\mathscr{I}^-} \frac{\mu(Q_j^k\cap D_{k+\ell})}{\mu(Q_j^k)}
\int_{E_j^k} (g_1(y))^q\,d\mu(y)\bigg]^{\frac1q}
\le C_\mathrm{D}\,a^{-\frac{\ell}{(p')_+}}\,\|g_1\|_q.
\]
Let $p'_\infty$ be the Nekvinda parameter for 
$p'(\cdot)\in\mathcal{N}(\Omega,\mu)$. Recall that $(p')_-\le p'_\infty\le(p')_+$ and 
$[p]_\mathcal{N}=[p']_\mathcal{N}$. Applying Lemma~\ref{le:N-emb} to the
exponent~$p'(\cdot)$, we 
further obtain
\begin{align}\label{est:T-g1}
\|T_\ell^-g_1\|_{p'(\cdot)}
&\le 4[p]_\mathcal{N}\,
\max\big\{\|T_\ell^-g_1\|_{p'_\infty},\|T_\ell^-g_1\|_{(p')_+}\big\}
\nonumber
\\
&\le 4[p]_\mathcal{N}\, C_\mathrm{D}\,a^{-\frac{\ell}{(p')_+}}\,
\max\{\|g_1\|_{p'_\infty},\|g_1\|_{(p')_+}\}
\nonumber
\\
&\le 12[p]_\mathcal{N}^2\,C_\mathrm{D}\,a^{-\frac{\ell}{(p')_+}}.
\end{align}
For $T_\ell^-g_2$, the condition $\alpha_j^k(g_2)\le 1$ allows us to bound the 
semimodular via the same geometric decay.
Since the supports $\{Q_j^k\cap D_{k+\ell}\}_{k,j}$ are pairwise disjoint for a 
fixed~$\ell$,
\begin{align*}
\varrho_{p'(\cdot)}(T_\ell^-g_2)
&\le\sum_{(k,j)\in\mathscr{I}^-} \int_{Q_j^k\cap D_{k+\ell}}
\alpha_j^k(g_2)\,d\mu(x)
\\
&=\sum_{(k,j)\in\mathscr{I}^-} \frac{\mu(Q_j^k\cap D_{k+\ell})}{\mu(Q_j^k)}
\int_{E_j^k} g_2(x)\,d\mu(x)
\\
&\le C_\mathrm{D}\,a^{-\ell}
\int_\Omega g_2(x)^{p'(x)}\,d\mu(x)
\\
&\le C_\mathrm{D}\,a^{-\ell}.
\end{align*}
Theorem~\ref{th:toolbox}(c) then implies
\begin{equation}\label{est:T-g2}
\|T_\ell^-g_2\|_{p'(\cdot)}\le C_\mathrm{D}\,
a^{-\frac{\ell}{(p')_+}}.
\end{equation}

Finally, it follows from estimates~\eqref{est:T+}, \eqref{est:T-g1}, 
and~\eqref{est:T-g2} that 
\[
\|T_\ell^+g\|_{p'(\cdot)}+\|T_\ell^-g_1\|_{p'(\cdot)}
+\|T_\ell^-g_2\|_{p'(\cdot)}
\le
(4[p]_\mathcal{A}^\mathscr{D}\,\eta+12[p]_\mathcal{N}^2+1)\,
C_\mathrm{D}\, a^{-\frac{\varepsilon\ell}{(p')_+}}.
\]
By the Riesz--Fischer property from Theorem~\ref{th:toolbox}(h), summing 
over $\ell\ge0$ establishes
\begin{equation}\label{est:T}
\|Tg\|_{p'(\cdot)} \le
\frac{(4[p]_\mathcal{A}^\mathscr{D}\,\eta+12[p]_\mathcal{N}^2+1)\,C_\mathrm{D}}
{1-a^{-\frac{\varepsilon}{(p')_+}}},
\end{equation}
which is the desired bound~\eqref{ineq:T-est} with the constant in the right-hand 
side depending only on $(p')_+$, $[p]_\mathcal{A}^\mathscr{D}$, $[p]_\mathcal{N}$, 
and~$C_\mathrm{D}$. This completes the proof.
\end{proof}
Unlike the Euclidean proof in~\cite[Proposition~5.4]{ADK26}, 
our proof of Theorem~\ref{th:aux-US} explicitly tracks the dependence of the 
constant~$c$ on~$(p')_+$, $[p]_\mathcal{A}^\mathscr{D}$, and $[p]_\mathcal{N}$.
This precision is required to address a subtle gap in the original proof of the main 
result~\cite[Theorem~1.1]{ADK26}, which implicitly assumed that~$c$ depends only on the 
\emph{upper bounds} of these characteristics. To rigorously justify this, 
we establish the following corollary. It is this corollary, rather 
than Theorem~\ref{th:aux-US} itself, that we apply in the proof of our main 
Theorem~\ref{th:main-US}.
\begin{corollary}\label{cor:aux-US}
Let a sequence of exponents~$p_n(\cdot)$ satisfy the hypotheses of 
Theorem~\ref{th:aux-US} with uniform bounds
$(p'_n)_+\le c_+$, $[p_n]_\mathcal{A}^\mathscr{D}\le c_\mathcal{A}^\mathscr{D}$, and 
$[p_n]_\mathcal{N}\le c_\mathcal{N}$. Then for all $n\in\mathbb{N}$ and 
$f\in L^{p_n(\cdot)}(\Omega,d,\mu)$,
\[
\|M^\mathscr{D}f\|_{p_n(\cdot)}\le c\,\|f\|_{p_n(\cdot)}
\]
holds with a constant~$c>0$ depending 
only on $c_+$, $c_\mathcal{A}^\mathscr{D}$, $c_\mathcal{N}$, and~$C_\mathrm{D}$.
\end{corollary}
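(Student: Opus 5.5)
The plan is to rerun the proof of Theorem~\ref{th:aux-US} for each fixed $p_n(\cdot)$ and record where each characteristic enters. At every occurrence we replace the characteristic by its uniform upper bound. We then check that every estimate stays valid after the replacement, because each one is monotone in the relevant quantity or holds for any admissible larger parameter.

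First, the constants that come from the Muckenhoupt condition. In Case~1 we use $\lambda:=\min\{\frac12,\frac1{2c_\mathcal{A}^\mathscr{D}}\}$ in place of $\min\{\frac12,\frac1{2[p_n]_\mathcal{A}^\mathscr{D}}\}$. We must confirm that Lemmas~\ref{le:Jensen} and~\ref{le:power-decay} still hold with this smaller $\lambda$ and with $[q]_\mathcal{A}^\mathscr{D}$ replaced by any upper bound $c_\mathcal{A}^\mathscr{D}\ge[q]_\mathcal{A}^\mathscr{D}$.
\begin{itemize}
\item In Lemma~\ref{le:Jensen}, the only step that uses $\lambda$ is $2\lambda[q]_\mathcal{A}^\mathscr{D}\le1$ together with $2\lambda\le1$. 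Both persist for a smaller $\lambda$.
\item In Lemma~\ref{le:power-decay}, the hypothesis on $t$ becomes $t\le c_\mathcal{A}^\mathscr{D}/\|\mathbf{1}_Q\|_{q(\cdot)}$. The estimate $\|f\|_{q(\cdot)}\le\lambda c_\mathcal{A}^\mathscr{D}\le\frac12$ still holds.
\item Both cases of that proof go through with $1/\lambda\le 4c_\mathcal{A}^\mathscr{D}$. They give $\beta=1-(16c_\mathcal{A}^\mathscr{D})^{-q_+}$ with $q_+=(p_n')_+\le c_+$.
\item Replacing $q_+$ by $c_+$ only enlarges $\beta$. Since $\lambda<1$, we have $(\lambda/4)^{q_+}\ge(\lambda/4)^{c_+}$, so the inequality $w(E)\ge(\lambda/4)^{c_+}w(Q')$ holds.
\end{itemize}
Hence all weights $(\lambda\alpha_j^k(g))^{p_n'(\cdot)}$ lie in $A_\infty^\mathscr{D}(Q_j^k)$ with the single pair of constants $\alpha=\frac12$ and $\beta=1-(16c_\mathcal{A}^\mathscr{D})^{-c_+}$. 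Theorem~\ref{th:equiv-Ainf} then gives $\eta,\varepsilon$ depending only on this $\beta$ and on $C_\mathrm{D}$. These constants are uniform in $n$.

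Next, the remaining estimates. The Case~1 bound $\alpha_j^k(g)\le[p_n]_\mathcal{A}^\mathscr{D}/\|\mathbf{1}_{Q_j^k}\|_{p_n'(\cdot)}$ is improved to the bound with $c_\mathcal{A}^\mathscr{D}$, which is the hypothesis of the modified Lemma~\ref{le:power-decay}. Passing from semimodular to norm by Theorem~\ref{th:toolbox}(c) gives a factor $a^{-\varepsilon\ell/(p_n')_+}\le a^{-\varepsilon\ell/c_+}$, since $a>1$ and $C_\mathrm{D}\eta a^{-\varepsilon\ell}$ may be written as $ab$ with $b\le1$ after splitting off a constant. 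The prefactor is $1/\lambda\le4c_\mathcal{A}^\mathscr{D}$.

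In Case~2, Lemma~\ref{le:N-emb} is applied with $[p_n]_\mathcal{N}\le c_\mathcal{N}$. The geometric factors $a^{-\ell/(p_n')_+}$ are bounded by $a^{-\ell/c_+}$. Summing the geometric series over $\ell$ gives
\[
\|Tg\|_{p_n'(\cdot)}\le\frac{(4c_\mathcal{A}^\mathscr{D}\eta+12c_\mathcal{N}^2+1)C_\mathrm{D}}{1-a^{-\varepsilon/c_+}}.
\]
Multiplying by $2a$, with $a$ fixed, say $a=3C_\mathrm{D}$, and by the factor $2$ from the norm conjugate formula, yields $c$ depending only on $c_+$, $c_\mathcal{A}^\mathscr{D}$, $c_\mathcal{N}$, and $C_\mathrm{D}$.

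The main obstacle is the power-decay step. Lemma~\ref{le:power-decay} and Theorem~\ref{th:equiv-Ainf} are stated with constants depending on the exact values $q_+$ and $[q]_\mathcal{A}^\mathscr{D}$, not on upper bounds. So one must verify explicitly that the $A_\infty^\mathscr{D}$-constants produced are monotone, so that they can be fixed from $c_+$ and $c_\mathcal{A}^\mathscr{D}$ alone. This is what permits a single application of Theorem~\ref{th:equiv-Ainf} for all $n$, $k$, and $j$. The other substitutions are monotone and routine.
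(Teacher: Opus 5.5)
Your proposal is correct and follows essentially the paper's route: uniform $A_\infty^{\mathscr{D}}$-constants $\alpha=\frac12$ and $\beta=1-(16c_\mathcal{A}^\mathscr{D})^{-c_+}$ feed a single application of Theorem~\ref{th:equiv-Ainf}, and the remaining constants in \eqref{est:T} are then bounded by monotonicity in the characteristics. The one difference is that you fix $\lambda$ via $c_\mathcal{A}^\mathscr{D}$ and re-verify Lemmas~\ref{le:Jensen} and~\ref{le:power-decay} for this smaller $\lambda$ (which does work), whereas the paper keeps $\lambda_n$ from $[p_n]_\mathcal{A}^\mathscr{D}$ and only notes $\beta_n\le\beta$ and $1/\lambda_n\le 4c_\mathcal{A}^\mathscr{D}$; also, your final prefactor should be $8a$ rather than $4a$ because of the normalization $\|g\|_{p'(\cdot)}\le\frac12$, which does not affect the dependence of $c$.
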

\begin{proof}
By the proof of Lemma~\ref{le:power-decay}, the weight  
$(\lambda\,\alpha_j^k(g))^{p'_n(\cdot)}$ belongs to $A_\infty^\mathscr{D}(Q_j^k)$ with 
constants $\alpha=\frac12$ and 
$\beta_n=1-(16[p_n]_\mathcal{A}^\mathscr{D})^{-(p'_n)_+}$. Because 
$\beta_n\le1-(16c_\mathcal{A}^\mathscr{D})^{-c_+}=:\beta$, 
Definition~\ref{def:A-inf} ensures this weight also satisfies the 
$A_\infty^\mathscr{D}$ condition for the uniform upper bound~$\beta$.
Applying Theorem~\ref{th:equiv-Ainf} with~$\alpha$ and~$\beta$ yields uniform 
power-decay constants $\eta\ge 1$ and $\varepsilon\in(0,1)$ that depend exclusively 
on~$c_+$, $c_\mathcal{A}^\mathscr{D}$, and~$C_\mathrm{D}$.

Substituting these uniform parameters into the proof of Theorem~\ref{th:aux-US}, and
combining the norm conjugate formula with the culmination bound~\eqref{est:T}, gives
\[
\|M^\mathscr{D}f\|_{p_n(\cdot)}
\le\frac{8a\,
(4[p_n]_\mathcal{A}^\mathscr{D}\,\eta+12[p_n]_\mathcal{N}^2+1)\,C_\mathrm{D}}
{1-a^{-\frac{\varepsilon}{(p'_n)_+}}}\,\|f\|_{p_n(\cdot)},
\]
where~$a>1$ depends only on~$C_\mathrm{D}$. Since the fractional multiplier is strictly 
non-decreasing with respect to $(p'_n)_+$, $[p_n]_\mathcal{A}^\mathscr{D}$, and 
$[p_n]_\mathcal{N}$, we can bound it uniformly by replacing these characteristics with 
their respective upper bounds $c_+$, $c_\mathcal{A}^\mathscr{D}$, and $c_\mathcal{N}$. 
This produces the desired uniform constant~$c$, completing the proof.
\end{proof}
For bounded spaces of homogeneous type, the Nekvinda condition is no 
longer required, allowing us to establish the following result.
\begin{proposition}[For bounded spaces and $p_+<\infty$]\label{prop:aux-BS}
Let $(\Omega,d,\mu)$ be 
a \emph{bounded} space of homogeneous type with a Borel-semiregular measure~$\mu$, 
and let~$\mathscr{D}$ be a dyadic grid on~$\Omega$. Suppose that 
\[
p(\cdot)\in\mathcal{A}^\mathscr{D}_\textnormal{loc}(\Omega,d,\mu)
\quad\text{and}\quad
1<p_-\le p_+<\infty.
\]
Then for all $f\in L^{p(\cdot)}(\Omega,d,\mu)$, 
\[
\|M^\mathscr{D}f\|_{p(\cdot)}\le c\,\|f\|_{p(\cdot)},
\]
where $c>0$ depends only on $(p')_+$, $[p]_\mathcal{A}^\mathscr{D}$, 
the dyadic doubling constant~$C_\mathrm{D}$, and the finite measure~$\mu(\Omega)$.
\end{proposition}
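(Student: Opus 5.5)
The plan is to repeat the proof of Theorem~\ref{th:aux-US} almost verbatim. The only change is in Case~2, where the Nekvinda embeddings of Lemma~\ref{le:N-emb} were used to control $T_\ell^-g_1$; on a bounded space we will replace them by the finiteness of~$\mu(\Omega)$.

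First, as before, we reduce by the Fatou property and the norm conjugate formula to proving $\|Tg\|_{p'(\cdot)}\le c$ for bounded nonnegative $g$ with $\|g\|_{p'(\cdot)}\le\frac12$. The Calder\'on--Zygmund decomposition of Lemma~\ref{le:CZ} now requires $a^k>|f|_\Omega$, so only levels $k\ge k_0$ are available, where $k_0$ is the least integer with $a^{k_0}>|f|_\Omega$. We therefore split
\[
\int_\Omega M^\mathscr{D}f\,g\,d\mu=\int_{\Omega\setminus\Omega_{k_0}}M^\mathscr{D}f\,g\,d\mu+\int_{\Omega_{k_0}}M^\mathscr{D}f\,g\,d\mu .
\]
On $\Omega\setminus\Omega_{k_0}$ we have $M^\mathscr{D}f\le a^{k_0}\le a\,|f|_\Omega$. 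Hence the first term is at most
\[
a\,|f|_\Omega\,\|g\|_1\le \frac{4a}{\mu(\Omega)}\,\|\mathbf 1_\Omega\|_{p'(\cdot)}\|\mathbf 1_\Omega\|_{p(\cdot)}\,\|f\|_{p(\cdot)}\|g\|_{p'(\cdot)}\le 2a[p]_\mathcal{A}^\mathscr{D}\,\|f\|_{p(\cdot)}.
\]
Here we use Hölder twice. The bound by $[p]_\mathcal{A}^\mathscr{D}$ holds if $\Omega$ itself is a dyadic cube, which is true at sufficiently coarse generations. If that fails, we instead bound the product $\|\mathbf 1_\Omega\|_{p(\cdot)}\|\mathbf 1_\Omega\|_{p'(\cdot)}$ directly by a constant depending on $\mu(\Omega)$, using $\|\mathbf 1_\Omega\|_{p(\cdot)}\le\max\{1,\mu(\Omega)\}$ and the same for $p'$. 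The second term is handled by the operator $T$ built from the cubes $Q_j^k$ with $k\ge k_0$, exactly as in Theorem~\ref{th:aux-US}.

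Case~1, the index set $\mathscr I^+$, goes through unchanged. It only uses Lemmas~\ref{le:Jensen} and~\ref{le:power-decay} together with \eqref{ineq:decay}, so it gives \eqref{est:T+}. The $g_2$ part of Case~2 is likewise unchanged and gives \eqref{est:T-g2}. For $g_1=g\mathbf 1_{\{g\le1\}}$ we use $T_\ell^-g_1\le \mathbf 1_{\bigcup_{k,j}Q_j^k\cap D_{k+\ell}}$. For fixed $\ell$ these sets are pairwise disjoint, and each $\alpha_j^k(g_1)\le1$. The semimodular estimate used for $g_2$ then runs with $g_1\le 1$ in place of $g_2^{p'(\cdot)}$:
\[
\varrho_{p'(\cdot)}(T_\ell^-g_1)\le\sum_{k,j}\frac{\mu(Q_j^k\cap D_{k+\ell})}{\mu(Q_j^k)}\int_{E_j^k}g_1\,d\mu\le C_\mathrm D a^{-\ell}\,\mu(\Omega).
\]
Theorem~\ref{th:toolbox}(c) with $a_0=\max\{1,C_\mathrm D\mu(\Omega)\}$ and $b=a^{-\ell}$ then gives
\[
\|T_\ell^-g_1\|_{p'(\cdot)}\le \max\{1,C_\mathrm D\mu(\Omega)\}\,a^{-\ell/(p')_+}.
\]
This is the point where $\mu(\Omega)$ enters the constant in place of $[p]_\mathcal{N}$. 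Summing over $\ell\ge0$ by the Riesz--Fischer property finishes the proof, with constants depending only on $(p')_+$, $[p]_\mathcal{A}^\mathscr{D}$, $C_\mathrm D$ and $\mu(\Omega)$.

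The main obstacle is the low-level term on $\Omega\setminus\Omega_{k_0}$, where the Calder\'on--Zygmund decomposition is unavailable. The remedy is the comparison of $|f|_\Omega\|g\|_1$ with the Muckenhoupt-type product for $\Omega$, which uses the boundedness of the space (it is a ball, or a finite union of top-level cubes) together with $\mu(\Omega)<\infty$.
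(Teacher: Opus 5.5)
Your proposal is correct and follows the paper's proof: the same reduction, the same split at the first level $k_0$ above the global average, the low part bounded by H\"older's inequality and the dyadic Muckenhoupt constant of the top cube $\Omega$, and the truncated Calder\'on--Zygmund operator with Case~1 unchanged. The differences are cosmetic. The paper bounds $T_\ell^-g$ without splitting $g$, using $\int_\Omega g\,d\mu\le\|\mathbf{1}_\Omega\|_{p(\cdot)}\le 2\max\{\mu(\Omega),1\}$. Your fallback is also unnecessary, since $\Omega\in\mathscr{D}_k$ as soon as $\delta^k$ exceeds the diameter of $\Omega$, by Theorem~\ref{th:HK}(e).
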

\begin{proof}
The argument adapts the proof of Theorem~\ref{th:aux-US}.
Assuming without loss of generality that~$f$ is bounded and 
satisfies~$\|f\|_{p(\cdot)}\le\frac12$, 
the norm conjugate formula reduces the problem to showing
\[
\int_\Omega M^\mathscr{D}f(x)\,g(x)\,d\mu(x)\le c
\]
for any nonnegative, bounded $g \in L^{p'(\cdot)}(\Omega,d,\mu)$ 
with~$\|g\|_{p'(\cdot)}\le\frac12$. 

Since $\Omega$ is bounded, it constitutes a 
single maximal cube in~$\mathscr{D}$. H\"older's inequality and 
$p(\cdot)\in\mathcal{A}_\textnormal{loc}^\mathscr{D}(\Omega,d,\mu)$
therefore yield
\[
|f|_\Omega\le\frac2{\mu(\Omega)}\,\|f\|_{p(\cdot)}\|\mathbf{1}_\Omega\|_{p'(\cdot)}
\le\frac{[p]_\mathcal{A}^\mathscr{D}}{\|\mathbf{1}_\Omega\|_{p(\cdot)}}=:c_0.
\]
Fixing $a>2\,C_\mathrm{D}$, we choose $k_0\in\mathbb{Z}$ such that 
$a^{k_0-1}\le c_0<a^{k_0}$ and split the integration over 
$\Omega\setminus\Omega_{k_0}$ and $\Omega_{k_0}$. 
On the complement $\Omega\setminus\Omega_{k_0}$, the pointwise bound 
$M^\mathscr{D}f\le a^{k_0}\le ac_0$ combined with H\"older's inequality implies
\[
\int_{\Omega\setminus\Omega_{k_0}} M^\mathscr{D}f(x)\,g(x)\,d\mu(x)
\le 2ac_0\,\|\mathbf{1}_\Omega\|_{p(\cdot)}\|g\|_{p'(\cdot)}
\le a[p]_\mathcal{A}^\mathscr{D}.
\]

On~$\Omega_{k_0}=\bigcup_{k\ge k_0}\Omega_k$, the condition
$a^k\ge a^{k_0}>c_0\ge|f|_\Omega$ ensures the level-set thresholds strictly exceed 
the global average. Applying the Calder\'on--Zygmund decomposition from 
Lemma~\ref{le:CZ} exclusively to generations $k\ge k_0$ produces the truncated 
auxiliary operator
\[
Tg(x) := \sum_{k\ge k_0} \sum_j \alpha_j^k(g)\,\mathbf{1}_{Q_j^k}(x),
\quad x\in\Omega,
\]
which, mirroring the unbounded case, yields the estimate
\[
\int_{\Omega_{k_0}} M^\mathscr{D}f(x)\,g(x)\,d\mu(x)
\le a\,\|Tg\|_{p'(\cdot)}.
\]

To bound~$\|Tg\|_{p'(\cdot)}$, we decompose 
$Tg=\sum_{\ell=0}^\infty (T_\ell^+g + T_\ell^-g)$. The large-average term~$T_\ell^+g$ 
satisfies the exponential decay~\eqref{est:T+}, exactly as in \emph{Case 1} of 
Theorem~\ref{th:aux-US}. For the small-average term~$T_\ell^-g$, the condition 
$\alpha_j^k(g) \le 1$ allows direct evaluation of the semimodular without splitting 
the test function~$g$:
\begin{align*}
\varrho_{p'(\cdot)}(T_\ell^-g)
&\le \sum_{k\ge k_0}\sum_{\substack{j:\\\alpha_j^k(g)\le1}}
\int_{Q_j^k\cap D_{k+\ell}} \alpha_j^k(g)\,d\mu(x)
\\
&\le C_\mathrm{D}\,a^{-\ell} \sum_{k\ge k_0}\sum_j 
\int_{E_j^k} g(x)\,d\mu(x)
\\
&\le C_\mathrm{D}\,a^{-\ell} \int_\Omega g(x)\,d\mu(x)
\\
&\le C_\mathrm{D}\,\|\mathbf{1}_\Omega\|_{p(\cdot)}\,a^{-\ell}.
\end{align*}
Since $1\le p(\cdot)\le\infty$, the squeezed embedding from 
Theorem~\ref{th:toolbox}(f) ensures
\[
\|\mathbf{1}_\Omega\|_{p(\cdot)} 
\le 2\,\max\{\|\mathbf{1}_\Omega\|_1, \|\mathbf{1}_\Omega\|_\infty\} 
= 2\,\max\{\mu(\Omega), 1\}.
\]
Applying the modular-to-norm estimate translates this into the 
exponential decay
\[
\|T_\ell^-g\|_{p'(\cdot)}\le2C_\mathrm{D}\,\max\{\mu(\Omega),1\}
\,a^{-\frac{\ell}{(p')_+}}.
\]
Summing the geometric series over $\ell\ge 0$ via the Riesz--Fischer property 
establishes $\|Tg\|_{p'(\cdot)}\le c$,
completing the proof with a final constant dependent only on~$(p')_+$, 
$[p]_\mathcal{A}^\mathscr{D}$, $C_\mathrm{D}$, and $\mu(\Omega)$.
\end{proof}
\begin{remark}
As in Corollary~\ref{cor:aux-US} for the unbounded case, one can replace 
the characteristics with their upper bounds $(p')_+\le c_+$ and 
$[p]_\mathcal{A}^\mathscr{D}\le c_\mathcal{A}^\mathscr{D}$,
and conclude that the norm inequality in Proposition~\ref{prop:aux-BS} holds
with a constant~$c$ depending only on~$c_+$, $c_\mathcal{A}^\mathscr{D}$, 
$C_\mathrm{D}$, and~$\mu(\Omega)$.
\end{remark}

\section{The Main Theorems for Unbounded Exponents}
\label{sec:main-ADK}
This section deduces Theorem~\ref{th:main-inf} from the restricted
Theorem~\ref{th:aux-US} and Proposition~\ref{prop:aux-BS} via suitable approximations 
of variable exponents that preserve the dyadic Muckenhoupt and Nekvinda conditions. 
The following theorem provides an analogue of~\cite[Lemma~4.4]{ADK26} for general
measure spaces.
\begin{theorem}[Approximations of variable exponents]\label{th:approximation}
Let $p(\cdot)\in\mathcal{P}(\Omega,\mu)$ and, for $n\in\mathbb{N}$, define
$p_n(\cdot)\in\mathcal{P}(\Omega,\mu)$ by
\[
\frac{1}{p_n(x)}:=\frac{1}{n+1}+\frac{n-1}{n+1}\,\frac{1}{p(x)},
\quad x\in\Omega.
\]
Then:
\begin{enumerate}
\item[\textnormal{(a)}] $p_n(\cdot)\to p(\cdot)$ $\mu$-almost everywhere;

\item[\textnormal{(b)}] $1<(p_n)_-\le(p_n)_+<\infty$ and $(p'_n)_+\le\max\{(p')_+,2\}$;

\item[\textnormal{(c)}] if 
$p(\cdot)\in\mathcal{A}_\textnormal{loc}(\Omega,d,\mu)$, then 
$p_n(\cdot)\in\mathcal{A}_\textnormal{loc}(\Omega,d,\mu)$ with 
$[p_n]_\mathcal{A}\le 8[p]_\mathcal{A}$;

\item[\textnormal{(d)}] if $p(\cdot)\in\mathcal{N}(\Omega,\mu)$, then 
$p_n(\cdot)\in\mathcal{N}(\Omega,\mu)$ with $[p_n]_\mathcal{N}\le2[p]_\mathcal{N}$;

\item[\textnormal{(e)}] for all $f\in L^2(\Omega,\mu)$,
\[
\lim_{n\to\infty}\|f\|_{\tilde{\varrho}_{p_n(\cdot)}}=
\|f\|_{\tilde{\varrho}_{p(\cdot)}}.
\]
\end{enumerate}
\end{theorem}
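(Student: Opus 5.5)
The plan is to write $1/p_n=\theta_n\cdot\tfrac12\cdot 2/(n+1)\ldots$; more usefully, to note that $1/p_n$ is a convex combination of $1/p$ and a fixed exponent. Put $\theta_n:=\frac{n-1}{n+1}$, so $1-\theta_n=\frac{2}{n+1}$ and
\[
\frac1{p_n}=\theta_n\frac1{p}+(1-\theta_n)\frac12,
\qquad
\frac1{p_n'}=\theta_n\frac1{p'}+(1-\theta_n)\frac12 .
\]
Thus $p_n$ interpolates between $p$ and the constant $2$ with weight $\theta_n\to1$. Part (a) is then immediate. For (b), $1/p_n\in[\tfrac{1-\theta_n}{2},\,\theta_n+\tfrac{1-\theta_n}{2}]$ stays strictly inside $(0,1)$, so $1<(p_n)_-\le(p_n)_+<\infty$. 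Since $1/p_n'$ is a convex combination of $1/p'$ and $\tfrac12$, it is at least $\min\{1/(p')_+,\tfrac12\}$, which gives $(p_n')_+\le\max\{(p')_+,2\}$.

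For (c), I will apply the interpolation estimate of Theorem~\ref{th:toolbox}(j) to $\mathbf 1_B$ twice:
\[
\|\mathbf 1_B\|_{p_n}\le 2\|\mathbf 1_B\|_{p}^{\theta_n}\mu(B)^{(1-\theta_n)/2},
\qquad
\|\mathbf 1_B\|_{p_n'}\le 2\|\mathbf 1_B\|_{p'}^{\theta_n}\mu(B)^{(1-\theta_n)/2}.
\]
Multiplying and dividing by $\mu(B)$ gives $4\big(\mu(B)^{-1}\|\mathbf 1_B\|_p\|\mathbf 1_B\|_{p'}\big)^{\theta_n}\le 4[p]_\mathcal A^{\theta_n}$. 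Since $[p]_\mathcal A\ge\tfrac12$, we have $[p]_\mathcal A^{\theta_n}\le 2[p]_\mathcal A$: if $[p]_\mathcal A\ge1$ the power is at most $[p]_\mathcal A$, and otherwise it is at most $1\le2[p]_\mathcal A$. This yields the constant $8$. The same computation works verbatim for dyadic cubes.

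For (d), I take $(p_n)_\infty$ defined by $1/(p_n)_\infty=\theta_n/p_\infty+(1-\theta_n)/2$. Then $1/s_n=|1/p_n-1/(p_n)_\infty|=\theta_n/s$, so $s_n=s/\theta_n\ge s$. I then estimate $\|1\|_{s/\theta_n}$. By the rescaling Theorem~\ref{th:toolbox}(i) in reverse, $\|1\|_{s/\theta}=\|1\|_{s}^{\theta}$ formally with exponent $1/\theta\ge1$, so $\|1\|_{s_n}=\| \,1^{1/\theta_n}\|_{s_n}$. More carefully, $\varrho_{s/\theta}(1/\lambda)=\int\lambda^{-s/\theta}=\varrho_s(\lambda^{-1/\theta})$, hence $\|1\|_{s/\theta}=\|1\|_s^{\theta}\le[p]_\mathcal N$ because $[p]_\mathcal N\ge1$. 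This even gives constant $1\le2$; a factor $2$ is harmless if the convention for $s=\infty$ points costs something.

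Part (e) is the main obstacle. The plan is to combine lower semicontinuity (k), which gives $\|f\|_{\tilde\varrho_p}\le\liminf\|f\|_{\tilde\varrho_{p_n}}$, with an upper bound via modular convergence (l). Fix $\lambda>\|f\|_{\tilde\varrho_p}$, so that $\tilde\varrho_p(f/\lambda)\le1$, and also $f/\lambda\in L^2$. Since $p_n$ lies between $\min\{p,2\}$ and $\max\{p,2\}$, it is squeezed between $s=\min\{p,2\}$ and $q=\max\{p,2\}$. The integrability $\tilde\varrho_s(f/\lambda')+\tilde\varrho_q(f/\lambda')<\infty$ for $\lambda'>\lambda$ follows from $|t|^{\min}\le|t|^{p}+|t|^2$ pointwise, and similarly for the max. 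Hence by (l), $\tilde\varrho_{p_n}(f/\lambda')\to\tilde\varrho_p(f/\lambda')\le\lambda/\lambda'<1$ by convexity scaling (m). So eventually $\|f\|_{\tilde\varrho_{p_n}}\le\lambda'$, and letting $\lambda'\downarrow\|f\|_{\tilde\varrho_p}$ finishes (e).

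The delicate points are the points where $p=\infty$ (the $t^\infty$ convention), which the $\lambda'>\lambda$ slack should handle since then $|f|/\lambda'<1$ there, and the degenerate case $\|f\|_{\tilde\varrho_p}=0$, which is treated by the same argument with arbitrary $\lambda>0$.
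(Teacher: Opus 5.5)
Your proposal is correct, and for parts (a), (b), (c) and (e) it follows the paper's proof essentially verbatim: the convex-combination identity with the constant exponent $2$; the interpolation estimate applied to $\mathbf{1}_B$ for both $p_n(\cdot)$ and $p_n'(\cdot)$, together with $[p]_\mathcal{A}\ge\frac12$; and, for (e), lower semicontinuity plus modular convergence under the squeeze $\min\{p(\cdot),2\}\le p_n(\cdot)\le\max\{p(\cdot),2\}$ and convexity scaling. Your choice of $\lambda'>\lambda>\|f\|_{\tilde{\varrho}_{p(\cdot)}}$ is only a reparametrization of the paper's $\lambda f/\|f\|_{\tilde{\varrho}_{p(\cdot)}}$ with $\lambda<1$.

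The only genuine deviation is in (d). The paper uses the inequality $1/s_n\le 1/s$ together with the squeezed embedding, which costs the factor $2$. You instead use the exact identity $s_n=s/\theta_n$ and rescaling to get $[p_n]_\mathcal{N}=[p]_\mathcal{N}^{\theta_n}\le[p]_\mathcal{N}$, which is a slightly sharper bound. For $n=1$ one has $\theta_1=0$, so the rescaling step does not literally apply. That case is trivial, however, since then $p_1\equiv2$, $s_1\equiv\infty$ and $[p_1]_\mathcal{N}=1$.
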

\begin{proof}
Convergence~(a) is immediate. The simple inequality 
$1+\frac1n\le p_n(x)<n+1$, $x\in\Omega$, implies that each $p_n(\cdot)$ is bounded
away from $1$ and $\infty$. Rewriting the definition as a convex combination,
\begin{equation}\label{eq:whm}
\frac{1}{p_n(x)}=\frac{2}{n+1}\,\frac12 +\frac{n-1}{n+1}\,\frac{1}{p(x)},
\end{equation}
shows that $p_n(x)$ lies between $p(x)$ and $2$. Consequently, $p'_n(x)$ lies between 
$p'(x)$ and $2$, hence $(p'_n)_+\le\max\{(p')_+,2\}$ and~(b) is established.

We now check~(c). Denote $\theta:=\frac{2}{n+1}$. By 
the interpolation estimate and rescaling in Theorem~\ref{th:toolbox}(i)--(j), 
identity~\eqref{eq:whm} implies that for any ball $B\subseteq\Omega$,
\[
\|\mathbf{1}_B\|_{p_n(\cdot)}\le2\,\|\mathbf{1}_B\|_{2}^\theta\,
\|\mathbf{1}_B\|_{p(\cdot)}^{1-\theta}
=2\,\mu(B)^{\theta/2}\,\|\mathbf{1}_B\|_{p(\cdot)}^{1-\theta}.
\]
Since the relation~\eqref{eq:whm} also holds for the conjugate exponents, we 
symmetrically obtain 
\[
\|\mathbf{1}_B\|_{p'_n(\cdot)}\le
2\,\mu(B)^{\theta/2}\,\|\mathbf{1}_B\|_{p'(\cdot)}^{1-\theta}.
\]
Together, 
\[
\frac{\|\mathbf{1}_B\|_{p_n(\cdot)} \|\mathbf{1}_B\|_{p'_n(\cdot)}}{\mu(B)}
\le 4 \bigg(\frac{\|\mathbf{1}_B\|_{p(\cdot)} 
\|\mathbf{1}_B\|_{p'(\cdot)}}{\mu(B)}\bigg)^{1-\theta}
\le 4[p]_\mathcal{A}^{1-\theta}\le8[p]_\mathcal{A}, 
\]
where we used $[p]_\mathcal{A}^{-\theta}\le2$ as a consequence of 
$[p]_\mathcal{A}\ge\frac12$. Taking the supremum over all balls~$B$ proves~(c). 

For~(d), let $p_\infty$ be the Nekvinda parameter for 
$p(\cdot)\in\mathcal{N}(\Omega,\mu)$. Define $(p_n)_\infty$ by 
\[
\frac{1}{(p_n)_\infty}=\frac{1}{n+1}+\frac{n-1}{n+1}\,\frac{1}{p_\infty}.
\]
As directly follows for all $x\in\Omega$,
\[
\frac1{s_n(x)}:=\bigg|\frac1{p_n(x)}-\frac1{(p_n)_\infty}\bigg|\le
\bigg|\frac1{p(x)}-\frac1{p_\infty}\bigg|=:\frac1{s(x)}.
\]
By Theorem~\ref{th:toolbox}(f), the squeeze $s(\cdot)\le s_n(\cdot)\le\infty$ results in
\[
[p_n]_\mathcal{N}:=\|1\|_{s_n(\cdot)}
\le2\,\max\{\|1\|_{s(\cdot)},\|1\|_\infty\}=2[p]_\mathcal{N}.
\]

Finally, we prove~(e). Set $r(\cdot):=\min\{p(\cdot),2\}$ and 
$q(\cdot):=\max\{p(\cdot),2\}$. Since $p_n(x)$ lies between $p(x)$ and $2$ for all
$x\in\Omega$, we have $r(\cdot)\le p_n(\cdot)\le q(\cdot)$. Due to the lower 
semi-continuity from Theorem~\ref{th:toolbox}(k), it suffices to prove that 
\begin{equation}\label{ineq:limsup}
\limsup_{n\to\infty} \|f\|_{\tilde{\varrho}_{p_n(\cdot)}}
\le \|f\|_{\tilde{\varrho}_{p(\cdot)}}.
\end{equation}
Consider the nontrivial case $0<\|f\|_{\tilde{\varrho}_{p(\cdot)}}<\infty$.
Fix $\lambda\in(0,1)$. Splitting $\Omega$ into $\{p(\cdot)\le2\}$ and $\{p(\cdot)>2\}$, 
and applying the unit ball property along with $f\in L^2(\Omega,\mu)$, gives
\[
\tilde{\varrho}_{r(\cdot)}\Big(\frac{\lambda f}{\|f\|_{\tilde{\varrho}_{p(\cdot)}}}\Big)
+\tilde{\varrho}_{q(\cdot)}\Big(\frac{\lambda f}{\|f\|_{\tilde{\varrho}_{p(\cdot)}}}\Big)
=
\tilde{\varrho}_{p(\cdot)}\Big(\frac{\lambda f}{\|f\|_{\tilde{\varrho}_{p(\cdot)}}}\Big)
+\tilde{\varrho}_2\Big(\frac{\lambda f}{\|f\|_{\tilde{\varrho}_{p(\cdot)}}}\Big)
<\infty.
\]
Therefore, Theorem~\ref{th:toolbox}(l) and the convexity of $\tilde{\varrho}_{p(\cdot)}$
imply
\[
\lim_{n\to\infty} \tilde{\varrho}_{p_n(\cdot)}
\Big(\frac{\lambda f}{\|f\|_{\tilde{\varrho}_{p(\cdot)}}}\Big)
=\tilde{\varrho}_{p(\cdot)}
\Big(\frac{\lambda f}{\|f\|_{\tilde{\varrho}_{p(\cdot)}}}\Big)
\le \lambda\, \tilde{\varrho}_{p(\cdot)}
\Big(\frac{f}{\|f\|_{\tilde{\varrho}_{p(\cdot)}}}\Big)\le\lambda <1.
\]
By the unit ball property, 
$\lambda\,\|f\|_{\tilde{\varrho}_{p_n(\cdot)}}
\le \|f\|_{\tilde{\varrho}_{p(\cdot)}}$ for all sufficiently large~$n$. Letting
$\lambda\to1^-$ establishes~\eqref{ineq:limsup} and finishes the proof of~(e).
\end{proof}
Finally, we establish the main theorem for unbounded spaces without 
the restriction $p_+<\infty$. We achieve this by approximating the unbounded exponent 
via Theorem~\ref{th:approximation}. Since the uniform norm bounds in 
Corollary~\ref{cor:aux-US} are independent of $p_+$, we apply them 
to the approximating sequence and safely pass to the limit. The properties of 
the adjacent dyadic grids then transfer this bound from the dyadic 
operators~$M^\mathscr{D}$ to the continuous maximal operator~$M$.
\begin{theorem}[Main theorem in unbounded spaces]\label{th:main-US}
Let $(\Omega,d,\mu)$ be an \emph{unbounded} space of homogeneous type with a 
Borel-semiregular measure~$\mu$. Suppose that 
$p(\cdot)\in\mathcal{A}_\textnormal{loc}(\Omega,d,\mu)\cap\mathcal{N}(\Omega,\mu)$
and $p_->1$. Then for all $f\in L^{p(\cdot)}(\Omega,d,\mu)$,
\[
\|Mf\|_{p(\cdot)}\le c\,\|f\|_{p(\cdot)},
\]
where $c>0$ depends only on $(p')_+$, $[p]_\mathcal{A}$, $[p]_\mathcal{N}$, and the 
structural constants of~$(\Omega,d,\mu)$.
\end{theorem}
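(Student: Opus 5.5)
The plan is to approximate $p(\cdot)$ by the bounded exponents $p_n(\cdot)$ of Theorem~\ref{th:approximation}, apply the uniform dyadic bound of Corollary~\ref{cor:aux-US} to each $p_n(\cdot)$, pass to the limit in $n$, and return to $M$ via the adjacent grids of Theorem~\ref{th:adjacent-grids}.

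\emph{Uniform characteristics.} Let $\mathscr{D}^1,\ldots,\mathscr{D}^K$ be the adjacent grids, and let $p_n(\cdot)$ be defined by $1/p_n=1/(n+1)+\frac{n-1}{n+1}\,1/p$. By Theorem~\ref{th:approximation}(b)--(d) we have:
\begin{itemize}
\item $1<(p_n)_-\le(p_n)_+<\infty$ and $(p_n')_+\le\max\{(p')_+,2\}=:c_+$;
\item $[p_n]_\mathcal{A}\le 8[p]_\mathcal{A}$, hence by Lemma~\ref{le:2-Mucken} $[p_n]_\mathcal{A}^{\mathscr{D}^t}\le 8C_\Omega[p]_\mathcal{A}=:c_\mathcal{A}^\mathscr{D}$ for every $t$;
\item $[p_n]_\mathcal{N}\le 2[p]_\mathcal{N}=:c_\mathcal{N}$.
\end{itemize}
Note that $(p')_+<\infty$ because $p_->1$. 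Corollary~\ref{cor:aux-US} then gives $\|M^{\mathscr{D}^t}h\|_{p_n(\cdot)}\le c\,\|h\|_{p_n(\cdot)}$ for all $n$ and $t$. Here $c$ depends only on $c_+$, $c_\mathcal{A}^\mathscr{D}$, $c_\mathcal{N}$ and $C_\mathrm{D}$, hence only on the quantities allowed in the statement. Summing over $t$ with Theorem~\ref{th:adjacent-grids} yields $\|Mh\|_{p_n(\cdot)}\le CKc\,\|h\|_{p_n(\cdot)}$, uniformly in $n$.

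\emph{Passage to the limit.} By the Fatou property (Theorem~\ref{th:toolbox}(g)) applied to $M$ of the truncations $\min\{|f|,k\}\mathbf{1}_{B_k}$ (noting $M f_k\nearrow Mf$), it suffices to treat $f$ bounded and supported in a ball, so $f\in L^2(\Omega,\mu)$. Switch to the equivalent norms $\|\cdot\|_{\tilde\varrho}$ via \eqref{eq:norms}. The lower semi-continuity of Theorem~\ref{th:toolbox}(k), using $p_n\to p$ a.e., gives
\[
\|Mf\|_{\tilde\varrho_{p(\cdot)}}\le\liminf_{n}\|Mf\|_{\tilde\varrho_{p_n(\cdot)}}\le 2CKc\,\liminf_n\|f\|_{\tilde\varrho_{p_n(\cdot)}}.
\]
Theorem~\ref{th:approximation}(e), which needs exactly $f\in L^2$, identifies the last limit as $\|f\|_{\tilde\varrho_{p(\cdot)}}$. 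Converting back with \eqref{eq:norms} gives $\|Mf\|_{p(\cdot)}\le 4CKc\,\|f\|_{p(\cdot)}$.

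\emph{Main obstacle.} The delicate point is the limit on the right-hand side. Norm convergence $\|f\|_{p_n}\to\|f\|_p$ is not automatic when $p_+=\infty$, which is why we use the $\tilde\varrho$-norms and Theorem~\ref{th:approximation}(e). We must ensure its hypothesis $f\in L^2$ holds after the reduction. We must also check that the Fatou reduction is compatible with the infinite-exponent regions: on $\{p=\infty\}$ the truncation keeps $f$ bounded, and $Mf_k\nearrow Mf$ pointwise by monotone convergence of averages. The second essential point is that Corollary~\ref{cor:aux-US} is used, rather than Theorem~\ref{th:aux-US}, so that the constant does not deteriorate with $(p_n)_+\to\infty$.
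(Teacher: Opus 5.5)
Your proposal is correct and follows the paper's route: bounded approximations $p_n(\cdot)$ with uniformly controlled characteristics, Corollary~\ref{cor:aux-US}, passage to the limit in the $\tilde\varrho$-norms via Theorem~\ref{th:approximation}(e), the Fatou reduction to bounded, boundedly supported $f$, and the adjacent grids. The differences are only in ordering and economy: you transfer from $M^{\mathscr{D}^t}$ to $M$ before letting $n\to\infty$. On the left-hand side you use only the lower semi-continuity of Theorem~\ref{th:toolbox}(k), which spares you the paper's step of placing $M^{\mathscr{D}}f_k$ in $L^2$ via the $L^2$-boundedness of $M^{\mathscr{D}}$.
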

\begin{proof}
Fix $x_0\in\Omega$ and define the truncations 
$f_k:=\min\{|f|,k\}\,\mathbf{1}_{B(x_0,k)}$ for $k\in\mathbb{N}$. Since each $f_k$
is a bounded function supported in a ball, $f_k\in L^2(\Omega,d,\mu)$.
Take a dyadic grid~$\mathscr{D}$ on~$\Omega$. The classical $L^2$-boundedness 
of~$M^\mathscr{D}$ ensures $M^\mathscr{D}f_k\in L^2(\Omega,d,\mu)$.

Let $p_n(\cdot)$ be bounded approximations of~$p(\cdot)$ from
Theorem~\ref{th:approximation}. Lemma~\ref{le:2-Mucken} guarantees
$p_n(\cdot)\in\mathcal{A}_\textnormal{loc}^\mathscr{D}(\Omega,d,\mu)$ 
with $[p_n]_\mathcal{A}^\mathscr{D}\le C_\Omega\,[p_n]_\mathcal{A}$, where $C_\Omega>1$ 
depends only on the structural constants of the underlying space. Overall,
\[
p_n(\cdot)\in\mathcal{A}_\textnormal{loc}^\mathscr{D}(\Omega,d,\mu)
\cap\mathcal{N}(\Omega,\mu)
\quad\textnormal{and}\quad
1<(p_n)_-\le(p_n)_+<\infty,
\]
with the uniform upper bounds $(p'_n)_+\le\max\{(p')_+,2\}$,
$[p_n]_\mathcal{A}^\mathscr{D}\le 8C_\Omega\,[p]_\mathcal{A}$, and 
$[p_n]_\mathcal{N}\le2[p]_\mathcal{N}$. Applying Theorem~\ref{th:aux-US} (in view
of Corollary~\ref{cor:aux-US}) and 
norm equivalence~\eqref{eq:norms} yields
\[
\|M^\mathscr{D}f_k\|_{\tilde{\varrho}_{p_n(\cdot)}}\le 
c\,\|f_k\|_{\tilde{\varrho}_{p_n(\cdot)}},
\]
for all $k,n\in\mathbb{N}$, 
where $c$ depends only on $(p')_+$, $[p]_\mathcal{A}$, $[p]_\mathcal{N}$, $C_\Omega$, 
and $C_\mathrm{D}$. 

Letting $n\to\infty$ and using Theorem~\ref{th:approximation}(e)
for both $f_k$ and $M^\mathscr{D}f_k$ gives
\[
\|M^\mathscr{D}f_k\|_{\tilde{\varrho}_{p(\cdot)}}\le 
c\,\|f_k\|_{\tilde{\varrho}_{p(\cdot)}}.
\]
Since $0\le f_k\nearrow|f|$, we also have $M^\mathscr{D}f_k\nearrow M^\mathscr{D}f$.
Letting $k\to\infty$ via the Fatou property from 
Theorem~\ref{th:toolbox}(g) and transitioning to the equivalent norm 
establishes
\begin{equation}\label{eq:bound-dyadic}
\|M^\mathscr{D}f\|_{p(\cdot)}\le 2c\,\|f\|_{p(\cdot)}.
\end{equation}

Crucially, the constant~$2c$ relies only on $(p')_+$, $[p]_\mathcal{A}$, 
$[p]_\mathcal{N}$, and the structural constants of~$(\Omega,d,\mu)$. It is therefore 
uniform across the adjacent grids $\mathscr{D}^1,\ldots,\mathscr{D}^K$ from 
Theorem~\ref{th:adjacent-grids}. Applying the maximal 
bound~\eqref{eq:bound-dyadic} to each of the $K$~grids yields
\[
\|Mf\|_{p(\cdot)}\le C\,\sum_{t=1}^K \|M^{\mathscr{D}^t}f\|_{p(\cdot)}
\le 2c\,CK\,\|f\|_{p(\cdot)},
\]
completing the proof.
\end{proof}
\begin{remark}
As in the Euclidean theory~\cite[Proposition~4.9]{CUF13}, the Nekvinda condition can be 
replaced by the stronger (but easier to verify) requirement of \emph{logarithmic 
decay at infinity}: suppose there exist~$x_0\in\Omega$, $p_\infty\in[1,\infty]$, 
and~$c_\infty>0$ such that
\[
\bigg|\frac{1}{p(x)}-\frac1{p_\infty}\bigg|\le\frac{c_\infty}{\log(e+d(x,x_0))},
\quad x\in\Omega.
\]

This condition implies~$p(\cdot)\in\mathcal{N}(\Omega,\mu)$. Indeed,
let the exponent $s(\cdot)$ be defined by~\eqref{cond:N}. For all integers~$k\ge0$,
define the balls $B_k:=B(x_0,2^k)$. Decompose~$\Omega$ into the base ball~$B_0$ and 
the dyadic annuli $A_k:=B_k\setminus B_{k-1}$ for $k\in\mathbb{N}$. The logarithmic
decay guarantees that for all $x\in A_k$,
\[
s(x)\ge\frac{1}{c_\infty}\,\log(e+2^{k-1})\ge ck,
\]
where $c>0$ is a constant independent of~$k$. Since the doubling 
condition~\eqref{cond:doubling} implies $\mu(A_k)\le C_\mu^k\,\mu(B_0)$,
for all~$\lambda>1$ we estimate
\begin{align*}
\varrho_{s(\cdot)}(1/\lambda)&=\int_{B_0} \lambda^{-s(x)}\,d\mu(x)
+\sum_{k=1}^\infty\int_{A_k} \lambda^{-s(x)}\,d\mu(x)
\\
&\le\mu(B_0)+\sum_{k=1}^\infty \lambda^{-ck}\mu(A_k)
\\
&\le\mu(B_0)\,\bigg[1+\sum_{k=1}^\infty (\lambda^{-c}\,C_\mu)^k\bigg].
\end{align*}
Choosing $\lambda$ sufficiently large makes the series convergent, 
yielding $1\in L^{s(\cdot)}(\Omega,\mu)$.
\end{remark}
For bounded spaces of homogeneous type, the analogue of Theorem~\ref{th:main-US} holds 
entirely without the Nekvinda condition.
\begin{theorem}[Main theorem in bounded spaces]
Let $(\Omega,d,\mu)$ be a \emph{bounded} space of homogeneous type with a 
Borel-semiregular measure~$\mu$. Suppose that 
$p(\cdot)\in\mathcal{A}_\textnormal{loc}(\Omega,d,\mu)$
and $p_->1$. Then for all $f\in L^{p(\cdot)}(\Omega,d,\mu)$,
\[
\|Mf\|_{p(\cdot)}\le c\,\|f\|_{p(\cdot)},
\]
where $c>0$ depends only on $(p')_+$, $[p]_\mathcal{A}$, the 
structural constants of~$(\Omega,d,\mu)$, and the finite measure~$\mu(\Omega)$.
\end{theorem}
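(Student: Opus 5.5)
The plan is to mirror the proof of Theorem~\ref{th:main-US}, with Proposition~\ref{prop:aux-BS} (and the Remark after it) in place of Corollary~\ref{cor:aux-US}.

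\emph{Reductions.} For $f\in L^{p(\cdot)}(\Omega,d,\mu)$, take the truncations $f_k:=\min\{|f|,k\}$. No spatial cutoff is needed, since $\mu(\Omega)<\infty$ gives $f_k\in L^\infty\subseteq L^2(\Omega,\mu)$. Fix a dyadic grid~$\mathscr{D}$. The $L^2$-boundedness of $M^\mathscr{D}$ then gives $M^\mathscr{D}f_k\in L^2(\Omega,\mu)$.

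\emph{Approximating exponents.} Let $p_n(\cdot)$ be the bounded approximations from Theorem~\ref{th:approximation}. Parts~(b) and~(c) of that theorem, combined with Lemma~\ref{le:2-Mucken}, give
\[
1<(p_n)_-\le(p_n)_+<\infty,\qquad (p_n')_+\le\max\{(p')_+,2\}=:c_+,\qquad [p_n]_\mathcal{A}^\mathscr{D}\le 8C_\Omega[p]_\mathcal{A}=:c_\mathcal{A}^\mathscr{D}.
\]
The Nekvinda condition plays no role here. By Proposition~\ref{prop:aux-BS} and the Remark after it,
\[
\|M^\mathscr{D}f_k\|_{p_n(\cdot)}\le c\,\|f_k\|_{p_n(\cdot)},
\]
where $c$ depends only on $c_+$, $c_\mathcal{A}^\mathscr{D}$, $C_\mathrm{D}$ and $\mu(\Omega)$, and is uniform in $n$ and $k$. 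Switching to the modular norms via~\eqref{eq:norms} costs a factor~$2$.

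\emph{Limits.} First let $n\to\infty$, using Theorem~\ref{th:approximation}(e) for both $f_k$ and $M^\mathscr{D}f_k$, which lie in $L^2$. This gives $\|M^\mathscr{D}f_k\|_{\tilde\varrho_{p(\cdot)}}\le 2c\|f_k\|_{\tilde\varrho_{p(\cdot)}}$. Then let $k\to\infty$ by monotone convergence and the Fatou property (Theorem~\ref{th:toolbox}(g)), and return to $\|\cdot\|_{p(\cdot)}$ using~\eqref{eq:norms}. Finally, the constant is uniform over the adjacent grids of Theorem~\ref{th:adjacent-grids}, so summing over the $K$ grids gives the bound for~$M$.

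\emph{Main obstacle.} The key point is verifying that Proposition~\ref{prop:aux-BS} has constants depending only on upper bounds of the characteristics. That proof uses the same power-decay step as Case~1 of Theorem~\ref{th:aux-US}, together with the factor $\|\mathbf{1}_\Omega\|_{p_n(\cdot)}\le 2\max\{\mu(\Omega),1\}$, which is independent of~$n$. So monotonicity in $(p_n')_+$ and $[p_n]_\mathcal{A}^\mathscr{D}$ goes through exactly as in Corollary~\ref{cor:aux-US}.

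\emph{Necessity.} The criterion in Theorem~\ref{th:main-inf} also requires necessity of the Muckenhoupt condition. This follows directly from Lemma~\ref{le:necess-Muck}.
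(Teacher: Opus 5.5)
Your proposal is correct and follows the paper's proof. You truncate by $f_k=\min\{|f|,k\}$, with no spatial cutoff since $\mu(\Omega)<\infty$, and run the approximation-and-limit argument of Theorem~\ref{th:main-US} with Proposition~\ref{prop:aux-BS} and its Remark (uniform constants from upper bounds) in place of Corollary~\ref{cor:aux-US}, then transfer to $M$ via the adjacent grids; your closing paragraph on necessity is not needed for this statement and belongs to Corollary~\ref{cor:criterion}.
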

\begin{proof}
The argument proceeds exactly as in the proof of Theorem~\ref{th:main-US}. 
Because the space has finite measure, we employ the simplified truncations 
$f_k:=\min\{|f|,k\}$ and substitute Proposition~\ref{prop:aux-BS} in place of 
Theorem~\ref{th:aux-US}.
\end{proof}
Conversely, Lemma~\ref{le:necess-Muck} establishes that the Muckenhoupt 
condition is necessary for the boundedness of the maximal operator. This yields a
complete characterization for bounded spaces, extending~\cite[Corollary~6.2]{KST26} to 
the case $p_+=\infty$.
\begin{corollary}[Criterion in bounded spaces]\label{cor:criterion}
Let $(\Omega,d,\mu)$ be a \emph{bounded} space of homogeneous type with a 
Borel-semiregular measure~$\mu$. Suppose~$p(\cdot)\in\mathcal{P}(\Omega,\mu)$ 
satisfies~$p_->1$. Then~$M$ is bounded on $L^{p(\cdot)}(\Omega,d,\mu)$ 
\emph{if and only if} $p(\cdot)\in\mathcal{A}_\textnormal{loc}(\Omega,d,\mu)$.
\end{corollary}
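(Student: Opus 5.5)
The corollary combines two results already established in the paper, one for each direction.

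For necessity, we assume that $M$ is bounded on $L^{p(\cdot)}(\Omega,d,\mu)$. Lemma~\ref{le:necess-Muck} then gives $p(\cdot)\in\mathcal{A}_\textnormal{loc}(\Omega,d,\mu)$ directly, with the explicit bound $[p]_\mathcal{A}\le 2\|M\|_{L^{p(\cdot)}\to L^{p(\cdot)}}$. This direction needs neither boundedness of the space nor the hypothesis $p_->1$; it holds for every exponent in $\mathcal{P}(\Omega,\mu)$.

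For sufficiency, we assume $p(\cdot)\in\mathcal{A}_\textnormal{loc}(\Omega,d,\mu)$ and $p_->1$. The preceding main theorem in bounded spaces then yields $\|Mf\|_{p(\cdot)}\le c\,\|f\|_{p(\cdot)}$, where $c$ depends only on $(p')_+$, $[p]_\mathcal{A}$, the structural constants, and $\mu(\Omega)$. Each of these quantities is finite:
\begin{itemize}
\item $(p')_+<\infty$ because $p_->1$;
\item $[p]_\mathcal{A}<\infty$ by assumption;
\item $\mu(\Omega)<\infty$ by the equivalence between boundedness of the space and finiteness of its measure \cite[Lemma~1.9]{BC96}.
\end{itemize}
The constant $c$ is therefore finite and $M$ is bounded.

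The only point that needs checking is that the main theorem in bounded spaces allows $p_+=\infty$. It does: its proof goes through the approximations of Theorem~\ref{th:approximation} together with the uniform version of Proposition~\ref{prop:aux-BS} recorded in the subsequent Remark. So no upper bound on $p_+$ is needed in the corollary, and this is exactly what extends \cite[Corollary~6.2]{KST26} to the endpoint $p_+=\infty$.
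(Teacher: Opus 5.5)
Your proposal is correct and matches the paper's argument. Necessity is exactly Lemma~\ref{le:necess-Muck}, and sufficiency is the main theorem in bounded spaces, which already covers $p_+=\infty$ through the approximations of Theorem~\ref{th:approximation}.
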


\section{An Unbounded Space Where Muckenhoupt Suffices}
\label{sec:US-Muck-suff}
In the Euclidean setting, the Muckenhoupt condition 
$p(\cdot)\in\mathcal{A}_\textnormal{loc}(\mathbb{R}^n)$ alone does not ensure the 
boundedness of the maximal operator on~$L^{p(\cdot)}(\mathbb{R}^n)$, even when 
$1<p_-\le p_+<\infty$. Kopaliani constructed the first 
counterexample~\cite[Theorem~1.3]{TK08}, followed by others 
in~\cite[Theorem~5.3.4]{DHHR11} and~\cite[Example~4.51]{CUF13}. 
Given the full characterization for bounded spaces in Corollary~\ref{cor:criterion}, 
this failure stems from the unboundedness of~$\mathbb{R}^n$.

This led to the open question posed in~\cite[Problem~6.4]{KST26}: in an arbitrary 
\emph{unbounded} space of homogeneous type~$(\Omega,d,\mu)$, is the local 
condition $p(\cdot)\in\mathcal{A}_\textnormal{loc}(\Omega,d,\mu)$ strictly insufficient 
for the boundedness of~$M$ on~$L^{p(\cdot)}(\Omega,d,\mu)$? 

We prove that the answer is \emph{no}. We construct an 
unbounded space of homogeneous type where the Muckenhoupt condition 
suffices for the maximal operator's boundedness on~$L^{p(\cdot)}(\Omega,d,\mu)$ 
with $1<p_-\le p_+<\infty$.
\begin{example}\label{ex:Muckenhoupt}
Let $\Omega:=\{0,1,2,\ldots\}$, and define the distance 
$d:\Omega\times\Omega\to[0,\infty)$ by
\[
d(m,n):=
\begin{cases}
0 & \text{if }m=n,
\\
2^{\max\{m,n\}} & \text{if }m\ne n.
\end{cases}
\]
Define the measure~$\mu$ on all subsets of~$\Omega$ by
\[
\mu(\{0\})=1
\quad\text{and}\quad
\mu(\{n\})=2^{n-1}
\text{ for }n\ge1.
\]
Then~$(\Omega,d,\mu)$ is a space of homogeneous type. Specifically: 
\begin{enumerate}
\item[\textnormal{(i)}] The distance~$d$ is an ultrametric, and 
the space~$(\Omega,d)$ is \emph{unbounded}.

\item[\textnormal{(ii)}]  
Every ball is either a singleton or an initial segment~$I_k:=\{0,1,\ldots,k\}$: 
\[
B(n,r)=
\begin{cases}
\{n\} & \text{if }0<r\le 2^n,
\\
I_k &\text{if }2^k<r\le 2^{k+1} \text{ for some }k\ge n.
\end{cases}
\]

\item[\textnormal{(iii)}] The topology is discrete; hence, every subset of~$\Omega$ 
is Borel, and the measure~$\mu$ is Borel-semiregular. 

\item[\textnormal{(iv)}] The measure~$\mu$ is doubling with constant $C_\mu=2$.

\item[\textnormal{(v)}] The dyadic grid 
$\mathscr{D}=\bigcup_{k\in\mathbb{Z}}\mathscr{D}_k$, defined by
\[
\mathscr{D}_{-\ell}:=\{I_\ell\}\cup\{\{n\}:n>\ell\},
\quad\ell\ge0,
\]
and
\[
\mathscr{D}_k:=\{\{n\}:n\in\Omega\},
\quad k\ge1,
\]
satisfies properties~(a)--(e) of the Hyt\"onen--Kairema construction 
in Theorem~\ref{th:HK} with~$C_D=2$, $R=2$, and $\delta=\frac12$. 

\item[\textnormal{(vi)}] Since every ball is a cube in~$\mathscr{D}$ and vice versa,
the single grid~$\mathscr{D}$ fulfills the properties of the 
adjacent dyadic grids from Theorem~\ref{th:adjacent-grids}
\emph{without the need for multiple shifted grids}.
\end{enumerate}
\end{example}
\begin{proof}
The ultrametric inequality
\[
d(m,n)\le\max\{d(m,k),d(k,n)\}
\quad\text{for all }k,m,n\in\Omega
\]
follows immediately from the definition of~$d$. Since $d(0,n)=2^n\to\infty$ as 
$n\to\infty$, the space~$(\Omega,d)$ is unbounded, establishing~(i). 
The classification of balls in~(ii) is obtained by direct calculation. 
Because each singleton $\{n\}=B(n,2^n)$ is open, the 
topology induced by~$d$ is discrete, yielding~(iii).

To prove the doubling property~(iv), consider a ball~$B(n,r)$. If $0<r\le2^n$, then 
$B(n,r)=\{n\}$, and doubling the radius implies that $B(n,2r)$ is either~$\{n\}$ 
or~$I_n$. By definition,
\[
\mu(I_n):=\mu(\{0\})+\sum_{k=1}^n \mu(\{k\})=1+\sum_{k=1}^n 2^{k-1}=2^n.
\]
Thus, for $n\ge1$,
\[
\frac{\mu(B(n,2r))}{\mu(B(n,r))}\le\frac{\mu(I_n)}{\mu(\{n\})}
=\frac{2^n}{2^{n-1}}=2,
\]
while for $n=0$ the ratio is exactly~$1$. Alternatively, if $2^k<r\le2^{k+1}$ for 
some~$k\ge n$, then 
$B(n,r)=I_k$ and $B(n,2r)=I_{k+1}$. Hence 
\[
\mu(B(n,2r))=2^{k+1}=2\,\mu(B(n,r)). 
\]
This confirms the measure~$\mu$ is doubling with constant~$C_\mu=2$.

For~(v), we verify the Hyt\"onen--Kairema axioms from Theorem~\ref{th:HK}.
By construction, each~$\mathscr{D}_k$ is a partition of~$\Omega$, and all 
non-singleton cubes form the nested chain $I_0\subset I_1\subset I_2\subset\ldots$. 
A singleton either lies in an~$I_k$ or is disjoint from it. The only genuine 
split occurs when $I_k=I_{k-1}\cup\{k\}$ for $k\ge1$. Here, the measure splits 
perfectly in half:
\[
\mu(I_{k-1})=\mu(\{k\})=2^{k-1}=\frac{\mu(I_k)}{2}.
\]
For repeated singletons, the child and parent have equal measure. Therefore, 
the dyadic doubling constant is~$C_\mathrm{D}=2$. To verify the inner and outer ball 
condition~(e), set $\delta=\frac12$ and $R=2$. For generations 
$-\ell\le0$: if $Q=I_\ell\in\mathscr{D}_{-\ell}$, choose $x_Q=0$. Then indeed
\[
B(0,2^\ell)\subseteq I_\ell=B(0,2^{\ell+1}).
\]
If $Q=\{n\}\in\mathscr{D}_{-\ell}$ with $n>\ell$, choose $x_Q=n$. Since 
$2^{\ell+1}\le2^n$, we have
\[
B(n,2^\ell)=\{n\}=B(n,2^{\ell+1}).
\]
For generations~$k\ge1$, every cube is a singleton. For $Q=\{n\}\in\mathscr{D}_k$,
choosing $x_Q=n$ trivially yields
\[
B(n,2^{-k})=\{n\}=B(n,2^{-k+1}).
\]
Thus, $\mathscr{D}$ is an admissible Hyt\"onen--Kairema dyadic grid.

Finally, claim~(vi) follows by noting that the operators~$M$ and $M^\mathscr{D}$
coincide.
\end{proof}
The sufficiency of the Muckenhoupt condition in this space follows from our recent 
characterization of the maximal operator's boundedness~\cite{AS26}.
\begin{proposition}
Let~$(\Omega,d,\mu)$ be the \emph{unbounded} space of homogeneous type defined in 
Example~\ref{ex:Muckenhoupt}. Suppose that 
$p(\cdot)\in\mathcal{A}_\textnormal{loc}(\Omega,d,\mu)$ and $1<p_-\le p_+<\infty$. 
Then~$M$ is bounded on~$L^{p(\cdot)}(\Omega,d,\mu)$.
\end{proposition}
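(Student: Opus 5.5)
Since every ball is a cube of~$\mathscr{D}$ and conversely (Example~\ref{ex:Muckenhoupt}(vi)), we have $M=M^\mathscr{D}$, and Lemma~\ref{le:2-Mucken} is not even needed: $[p]_\mathcal{A}^\mathscr{D}=[p]_\mathcal{A}$. The plan is to reduce the problem to a uniform estimate for dyadic averaging operators and then to exploit the very special cube structure. The cubes are singletons and the single chain $I_0\subset I_1\subset\cdots$ with $\mu(I_k)=2^k$. The average of $|f|$ over $\{n\}$ is $|f(n)|$ itself, so
\[
Mf(n)=\max\Big\{|f(n)|,\ \sup_{k\ge n}|f|_{I_k}\Big\},
\]
and it suffices to bound the operator $Hf(n):=\sup_{k\ge n}|f|_{I_k}$ on $L^{p(\cdot)}(\Omega,d,\mu)$.

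The first route I would try is to invoke the characterization~\cite[Theorem~1.2]{AS26}. That result says, roughly, that $M^\mathscr{D}$ is bounded if and only if the averaging operators $A_\mathcal{F}f:=\sum_{Q\in\mathcal{F}}|f|_Q\mathbf{1}_Q$ are uniformly bounded over families $\mathcal{F}$ of pairwise disjoint cubes. This is where the geometry collapses. Any disjoint family consists of singletons, on which $A_\mathcal{F}$ acts as the identity, together with at most one segment $I_k$, because segments are nested. Hence
\[
A_\mathcal{F}f\le|f|+|f|_{I_k}\mathbf{1}_{I_k}.
\]
H\"older's inequality and the Muckenhoupt condition give
\[
\big\||f|_{I_k}\mathbf{1}_{I_k}\big\|_{p(\cdot)}\le\frac{2}{\mu(I_k)}\|\mathbf{1}_{I_k}\|_{p(\cdot)}\|\mathbf{1}_{I_k}\|_{p'(\cdot)}\,\|f\|_{p(\cdot)}\le2[p]_\mathcal{A}\|f\|_{p(\cdot)}.
\]
This yields the uniform bound $\|A_\mathcal{F}\|\le1+2[p]_\mathcal{A}$, which is exactly the hypothesis of~\cite{AS26}; the restriction $1<p_-\le p_+<\infty$ is presumably what that theorem requires. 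In short, the Muckenhoupt condition controls a single cube, and in this space a disjoint family never contains more than one non-trivial cube.

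If the precise form of~\cite{AS26} concerns a different class of averaging operators, I would fall back on a direct argument for $H$. Expanding
\[
|f|_{I_k}=\sum_{j\le k}\frac{\mu(I_j)}{\mu(I_k)}\cdot\frac{|f(j)|\mu(\{j\})}{\mu(I_j)}
\]
and using $\mu(I_j)/\mu(I_k)=2^{j-k}$, I would dominate $H$ by a geometrically weighted sum of one-cube averages, with each piece $A_{I_k}(f\mathbf{1}_{I_j})$ controlled as above. The step I expect to be the main obstacle is making the series over $k-j$ converge in $L^{p(\cdot)}$. For this one needs
\[
\|\mathbf{1}_{I_k}\|_{p(\cdot)}\lesssim2^{(k-j)\theta}\|\mathbf{1}_{I_j}\|_{p(\cdot)}\quad\text{for some }\theta<1,
\]
that is, a reverse H\"older or $A_\infty$-type growth of $\|\mathbf{1}_{I_k}\|_{p(\cdot)}$ along the chain. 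I would attempt to extract this from the Muckenhoupt condition via Lemma~\ref{le:power-decay} applied on the cube $I_k$, with $E=I_j$ and $\mu(E)/\mu(I_k)=2^{j-k}$. The decay comparison between the weights $\mathbf{1}_{I_j}$ and $\mathbf{1}_{I_k}$ should then come from the power-decay inequality with exponent~$\varepsilon$.
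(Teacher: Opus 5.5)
Your first route is exactly the paper's proof. A disjoint family contains at most one segment $I_k$ and the singletons contribute at most $|f|$; H\"older's inequality with the Muckenhoupt condition bounds the single average by $2[p]_\mathcal{A}\|f\|_{p(\cdot)}$; and \cite[Theorem~1.2]{AS26} then applies, with the single grid $\mathscr{D}$ standing in for the adjacent grids because balls and cubes coincide.

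The fallback is unnecessary, and as sketched it would not work anyway. Lemma~\ref{le:power-decay} requires $1\le t\le[q]_\mathcal{A}^\mathscr{D}/\|\mathbf{1}_Q\|_{q(\cdot)}$, which cannot hold on $Q=I_k$ once $\|\mathbf{1}_{I_k}\|_{q(\cdot)}\ge 2^{k/q_+}$ exceeds $[q]_\mathcal{A}^\mathscr{D}$, so a growth bound with $\theta<1$ would need a genuine self-improvement argument.
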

\begin{proof}
Consider the dyadic grid~$\mathscr{D}$ from Example~\ref{ex:Muckenhoupt}(v).
For a function $f\in L^1_\textnormal{loc}(\Omega,d,\mu)$ and a cube~$Q\in\mathscr{D}$, 
denote $A_Qf:=|f|_Q\mathbf{1}_Q$. Because every cube in~$\mathscr{D}$ is a ball, 
H\"older's inequality and the Muckenhoupt condition yield
\begin{equation}\label{ineq:AQ}
\|A_Qf\|_{p(\cdot)}=|f|_Q\,\|\mathbf{1}_Q\|_{p(\cdot)}
\le\frac{2\,\|f\|_{p(\cdot)}}{\mu(Q)}\,\|\mathbf{1}_Q\|_{p'(\cdot)}
\|\mathbf{1}_Q\|_{p(\cdot)}
\le2[p]_\mathcal{A}\,\|f\|_{p(\cdot)}.
\end{equation}

Fix a family $\mathcal{Q}$ of pairwise disjoint cubes from~$\mathscr{D}$, and 
define
\[
T_\mathcal{Q}f:=\sum_{Q\in\mathcal{Q}} A_Qf.
\]
Due to the nested geometry of the space, $\mathcal{Q}$ can contain at most one 
non-singleton cube. 
Let $S:=\{n:\{n\}\in\mathcal{Q}\}$. If~$\mathcal{Q}$ consists only of singletons,
then
\[
\|T_\mathcal{Q}f\|_{p(\cdot)}=
\bigg\|\sum_{n\in S} |f(n)|\,\mathbf{1}_{\{n\}}\bigg\|_{p(\cdot)}
=\|\,|f|\mathbf{1}_S\|_{p(\cdot)}\le\|f\|_{p(\cdot)}.
\]
If, alternatively, $\mathcal{Q}$ also contains an initial segment~$I_\ell$, 
then~\eqref{ineq:AQ} yields 
\[
\|T_\mathcal{Q}f\|_{p(\cdot)}\le
\|A_{I_\ell}f\|_{p(\cdot)}+\|\,|f|\mathbf{1}_S\|_{p(\cdot)}
\le (2[p]_\mathcal{A}+1)\,\|f\|_{p(\cdot)}.
\]
Consequently, the averaging operators~$T_\mathcal{Q}$ are bounded on 
$L^{p(\cdot)}(\Omega,d,\mu)$ uniformly over all families~$\mathcal{Q}$ of pairwise 
disjoint cubes from~$\mathscr{D}$.

To conclude, we invoke the characterization from~\cite[Theorem~1.2]{AS26}: 
on an unbounded space of homogeneous type with a Borel-semiregular measure, 
given $1<p_-\le p_+<\infty$, the maximal operator is bounded
on~$L^{p(\cdot)}(\Omega,d,\mu)$ if and only if the operators~$T_\mathcal{Q}$ are 
bounded on~$L^{p(\cdot)}(\Omega,d,\mu)$ uniformly over all families~$\mathcal{Q}$ of 
pairwise disjoint cubes \emph{from the union of the Hyt\"onen--Kairema adjacent 
dyadic grids}. 

Since every ball is a cube in~$\mathscr{D}$, our single grid already provides the 
requisite fine covering property of the adjacent grids~\cite[Theorem~3.4]{AS26}.
Thus, the established uniform bound on~$T_\mathcal{Q}$ immediately yields
the boundedness of~$M$ on~$L^{p(\cdot)}(\Omega,d,\mu)$.
\end{proof}

\section*{Declarations}
\textbf{Conflict of interest:} The author declares no conflicts of interest.

\vspace{0.5em}\noindent
\textbf{Data availability:} Data sharing not applicable to this article as no datasets 
were generated or analysed during the current study.

\bibliographystyle{abbrv}
\bibliography{AS-bib}

\end{document}